\pgfplotsset{compat=1.16}
\newlength\fheightscen \newlength\fwidthscen
\newlength\fheightcompat \newlength\fwidthcompat 
\newlength\fheightincompat \newlength\fwidthincompat 
\newlength\fheight \newlength\fwidth 
\newcommand{\bbR}{\mathbb{R}} 
\newcommand{\N}{\mathbb{N}} 
\newcommand{\bbZ}{\mathbb{Z}} 
\newcommand{\Fourier}{\mathcal{F}}
\newcommand{\Fper}{\mathcal{F}_P^\text{per}}
\newcommand{\DFT}{\mathrm{DFT}}
\newcommand{\norm}[2]{\left\Vert #1 \right\Vert_{#2}}
\newcommand{\smallnorm}[2]{{\Vert #1 \Vert}_{#2}}
\newcommand{\scalprod}[2]{\left\langle #1 \right\rangle_{#2}}
\newcommand{\smallscalprod}[2]{\langle #1 \rangle_{#2}}
\newcommand{\abs}[1]{\left\vert #1 \right\vert}
\DeclareMathOperator*{\argmin}{argmin}
\DeclareMathOperator{\ran}{ran}
\DeclareMathOperator{\spn}{span}
\DeclareMathOperator{\supp}{supp}
\DeclareMathOperator{\prox}{prox}
\DeclareMathOperator{\proj}{proj}
\DeclareMathOperator{\esssupp}{ess\,supp}
\newcommand{\X}{\mathcal{X}}
\newcommand{\Y}{\mathcal{Y}}
\newcommand{\calZ}{\mathcal{Z}}
\newcommand{\V}{\mathcal{V}}
\newcommand{\W}{\mathcal{W}}
\newcommand{\udag}{{u^\dagger}}
\newcommand{\gdag}{{g^\dagger}}
\newcommand{\gobs}{Y}
\newcommand{\bias}{\mathrm{bias}}
\newcommand{\var}{\mathrm{var}}
\newcommand{\Cov}{\mathrm{Cov}}
\newcommand{\E}[1]{\mathbb E \left[#1\right]}
\newcommand{\Var}[1]{\mathbb V \left[#1\right]}
\renewcommand{\P}[2]{\mathbb P_{#1} \left[#2\right]}
\renewcommand{\phi}{\varphi}
\renewcommand{\epsilon}{\varepsilon}
\newcommand{\Normal}{\mathcal{N}}
\newcommand{\di}{\mathrm{d}}
\newcommand{\rev}[1]{#1}
\theoremstyle{plain}
\newtheorem{thm}{Theorem}
\numberwithin{thm}{section}
\newtheorem{lem}[thm]{Lemma}
\newtheorem{cor}[thm]{Corollary}
\newtheorem{ass}{Assumption}
\theoremstyle{definition}
\newtheorem{rem}[thm]{Remark}
\newtheorem{ex}[thm]{Example}
\numberwithin{equation}{section}
\crefname{ass}{Assumption}{Assumptions}
\crefname{rem}{Remark}{Remarks}
\crefname{ex}{Example}{Examples}
\crefname{lem}{Lemma}{Lemmas}
\crefname{thm}{Theorem}{Theorems}
\crefname{cor}{Corollary}{Corollaries}
\title{Optimal regularized hypothesis testing in statistical inverse problems}
\shorttitle{Optimal regularized hypothesis testing}
\author{%
	Remo Kretschmann\thanks{Institute of Mathematics, University of Würzburg (\email{remo.kretschmann@mathematik.uni-wuerzburg.de}, \email{daniel.wachsmuth@mathematik.uni-wuerzburg.de}, \email{frank.werner@mathematik.uni-wuerzburg.de}).}
	\and Daniel Wachsmuth\footnotemark[1]
	\and Frank Werner\footnotemark[1]
}
\date{2023-10-17}
\begin{document}

\maketitle

\begin{abstract}
Testing of hypotheses is a well studied topic in mathematical statistics. Recently, this issue has also been addressed in the context of inverse problems, where the quantity of interest is not directly accessible but only after the inversion of a (potentially) ill-posed operator. In this study, we propose a regularized approach to hypothesis testing in inverse problems in the sense that the underlying estimators (or test statistics) are allowed to be biased. Under mild source-condition type assumptions, we derive a family of tests with prescribed level $\alpha$ and subsequently analyze how to choose the test with maximal power out of this family. As one major result we prove that regularized testing is always at least as good as (classical) unregularized testing. Furthermore, using tools from convex optimization, we provide an adaptive test by maximizing the power functional, which then outperforms previous unregularized tests in numerical simulations by several orders of magnitude.
\end{abstract}

\section{Introduction}
\label{sec:intro}

\subsection{Setup}

Consider a statistical inverse problem
\begin{equation}\label{eq:model}
\gobs = T \udag   + \sigma Z
\end{equation}
where $T : \X \to \Y$ is a bounded and linear forward operator mapping between a real Banach space $\X$ and a Hilbert space $\Y$, $\udag \in \X$ is an unknown quantity of interest, $\sigma > 0$ a noise level and $Z$ a Hilbert space process on $\Y$. This means that for each $g \in \Y$ one has access to the real-valued random variable
\[
\left\langle \gobs, g\right\rangle = \left\langle T \udag, g\right\rangle + \sigma \left\langle Z, g\right\rangle,
\]
with the noise term $Z_g := \left\langle Z, g\right\rangle$, i.e. $\gobs$ can be interpreted as a random element in the \emph{algebraic} dual space of $\Y$. In the following, $\gdag := T \udag \in \Y$ denotes the ideal (but unavailable) data, and most often, $\X$ and $\Y$ are spaces of functions on some domain $\Omega \subset \bbR^d$ such as $\mathbf{L}^p \left(\Omega\right)$ or Sobolev spaces $H^s\left(\Omega\right)$.

Model \eqref{eq:model} includes the standardized Gaussian white noise model where $Z_g \sim \mathcal N \big(0, \norm{g}{\Y}^2\big)$ and $\Cov\left[ Z_{g_1}, Z_{g_2}\right] = \left\langle g_1, g_2\right\rangle$ for all $g_1, g_2 \in \Y$. In the following, we refer to this model as the Gaussian one. Model \eqref{eq:model} is a widely accepted model (see, e.g. \cite{bhmr07,mp01}), and especially the Gaussian version may serve as a prototype due to its simplicity on the one hand, but also due to its generality in view of central limit theorems and asymptotic equivalence statements on the other (cf. e.g. \cite{rsh18}).

\subsection{Estimation and Inference}
\label{sec:est_inf}

Statistical inverse problems of the form \eqref{eq:model} with ill-posed $T$ can be used to model many problems of practical interest ranging from astrophysics to cell biology, especially including (medical) imaging procedures. Consequently, estimation of $\udag$ from $Y$ as in \eqref{eq:model} has been treated extensively in the literature. Investigated methods include filter-based regularization (see, e.g. \cite{ehn96} for deterministic results and \cite{bhmr07} for results in the model \eqref{eq:model}) and regularization based on the singular value decomposition of $T$ (see, e.g. \cite{cg06,js91,mr96}), variational methods (see, e.g. \cite{hw13, miller2021}), as well as wavelet (see, e.g. \cite{jkpr04, as98, d95}) and Galerkin-approximation based methods (see, e.g. \cite{chr04}). However, in many practical applications mentioned above, not the whole function $\udag$ is of interest, but only specific features of it, such as (in the example of functions) modes, homogeneity, monotonicity, or the support. In the manuscript at hand we will therefore focus on inference for such features by means of statistical hypothesis testing.

\rev{Hypothesis testing in statistical inverse problems has been considered in the literature for global hypothesis testing problems of the form
	\begin{equation}\label{eq:ht_global}
		H_0 : \udag = 0 \qquad\text{vs.}\qquad H_1 : \udag \in B, \norm{\udag}{\X} \geq \rho
	\end{equation}
	with a smoothness class $B \subset \X$ and a radius $\rho > 0$. In this case, the situation is widely understood from a minimax viewpoint (see e.g. \cite{llm11,iss12,ilm14,mm14}), and it has furthermore been argued there that the testing problem \eqref{eq:ht_global} is equivalent (in the sense of minimax distinguishability) to 
	\[
	H_0 : \gdag = 0 \qquad\text{vs.}\qquad H_1 : \gdag \in B', \norm{\gdag}{\Y} \geq \rho'
	\]
	where $B' \subset \Y$ is another smoothness class and $\rho' > 0$ another radius. Note that this problem seems considerably simpler on first glance as no inverse problem is involved, see also \cite{emw18,pwm23} for sharp descriptions of minimax distinguishability in related models.}
	
	\rev{However, many local features of $\udag$ cannot be described by a global testing problem of the form \eqref{eq:ht_global}.} Suppose instead that there is a bounded linear functional $\varphi \in \X^*$ such that $\udag$ has a specific feature if (and maybe only if) $\left\langle \varphi, \udag\right\rangle_{\X^*\times \X} > 0$. Here and in what follows, $\X^*$ denotes the \emph{topological} dual space of $\X$, and we write $\left\langle \varphi, \udag\right\rangle_{\X^* \times \X}  = \left\langle \varphi, \udag\right\rangle := \varphi \left(\udag\right)$ to ease the notation. In fact, many interesting features such as homogeneity, support and monotonicity of functions can be described by (families of) bounded linear functionals $\varphi \in \X^*$, see, e.g. \cite{shmd13,ebd17,pwm18,depsh19}. A specific example will be discussed in Section \ref{sec:num}.

As a consequence we consider a hypothesis testing problem of the form
\begin{equation}\label{eq:hypothesis_test}
H_0 : \left\langle \varphi, \udag\right\rangle =  0\qquad \text{vs.}\qquad H_1 : \left\langle \varphi, \udag\right\rangle > 0
\end{equation}
with a linear functional in $\varphi \in \X^*$. \rev{Note, that due to freedom in the choice of $\varphi$, there is no direct connection between \eqref{eq:ht_global} and \eqref{eq:hypothesis_test}, and} that we restrict ourselves to the one-sided hypothesis testing problem \eqref{eq:hypothesis_test} for simplicity only. Most of what follows can readily be adopted to the corresponding two-sided problem where $H_1$ is replaced by $\abs{\scalprod{\varphi_B, \udag}{}} > 0$.

 In the previously mentioned works \cite{shmd13,ebd17,pwm18,depsh19} it is assumed that $\varphi \in \ran\left(T^*\right)$, i.e. there is a $\Phi_0 \in \Y^* = \Y$ such that $\varphi = T^*\Phi_0$, which yields
\begin{equation}\label{eq:lL}
	\left\langle \varphi, \udag\right\rangle_{\X^* \times \X} = \left\langle T^*\Phi_0, \udag\right\rangle_{\X^* \times \X} =\left\langle \Phi_0, T\udag\right\rangle_{\Y \times \Y}.
\end{equation}
Thus, $\left\langle \gobs,\Phi_0\right\rangle$ is a natural estimator for the desired quantity $\left\langle \varphi, \udag\right\rangle$. To design a test based on this test statistic note that
\[
\P{0}{\left\langle \gobs,\Phi_0\right\rangle> c} = \P{}{Z_{\Phi_0} > \frac{c}{\sigma}},
\]
which shows that the critical value to achieve level $\alpha\in \left(0,1\right)$ has to be chosen as $c = \sigma q_{1-\alpha} \left(\Phi_0\right)$ where $q_{1-\alpha} \left(g\right)$ is the $\left(1-\alpha\right)$-quantile of (the distribution of) $Z_{g}$ for $g \in \Y$. Concluding, the test
\begin{equation}\label{eq:test_ub}
	\Psi_{0} \left(\gobs\right) := \mathbf 1_{\left\langle \gobs,\Phi_0\right\rangle>\sigma q_{1-\alpha} \left(\Phi_0\right)} = \begin{cases} 1 & \text{if }\left\langle \gobs,\Phi_0\right\rangle>\sigma q_{1-\alpha} \left(\Phi_0\right), \\ 0 & \text{else} \end{cases}
\end{equation}
has level $\alpha \in \left(0,1\right)$ for the testing problem \eqref{eq:hypothesis_test}, i.e. $\P{0}{\Psi_0\left(\gobs\right) = 1 }  \leq \alpha$ where $\mathbb P_0$ denotes the law of $Y$ under the hypothesis $H_0$. Note that in the Gaussian model $q_{1-\alpha} \left(\Phi_0\right)= \norm{\Phi_0}{\Y} q_{1-\alpha}^{~\mathcal N}$ with the $\left(1-\alpha\right)$-quantile $q_{1-\alpha}^{~\mathcal N}$ of $\mathcal N \left(0,1\right)$.

The test $\Psi_0$ and multiple tests for families of functionals based on $\Psi_0$ have been proven to satisfy certain optimality properties (see, e.g. \cite{pwm18}), but suffer from two substantial drawbacks:
\begin{itemize}
	\item[(I1)] If $\varphi \notin \ran\left(T^*\right)$, then the above approach is not applicable. Hence, only specific properties can be tested this way.
	\item[(I2)] The computation of $\Phi_0$ involves the solution of the ill-posed equation $\varphi = T^*\Phi_0$, which for ill-posed $T^*$ implies that the norm of $\Phi_0$ and hence the critical value of $\Psi_0$ is huge.
\end{itemize}

The aim of this paper is to resolve both issues (I1) and (I2) by regularization in the sense that we allow for a bias in the estimation of $\left\langle \varphi, \udag\right\rangle_{\X^* \times \X}$ from $Y$.

\subsection{Outline}

The outline of this paper is as follows: In \cref{sec:reg_test}, we derive a whole family of test for the problem \eqref{eq:hypothesis_test} based on probe elements $\Phi \in \Y$, for which we prove that they all have prescribed level $\alpha$ under a reasonable, source-type assumption on $\udag$. The subsequent \cref{sec:opt_reg_test} is then devoted to the question which probe element $\Phi \in \Y$ should be chosen. In case of Gaussian observations, we prove that there exists an optimal $\Phi$ in the sense that the finite sample power is maximized over all tests in the previously discussed family. This $\Phi$, and hence the corresponding test, will, however, depend on the unknown $\udag$, and can hence not be accessed in practice. To resolve this issue, we develop an adaptive version in \cref{sec:adaptive}. \Cref{sec:computability} is devoted to the practical computation of both $\Phi$ and the corresponding adaptive version as solutions of a convex optimization problem. In \cref{sec:num} we discuss support inference in deconvolution problems as an example and provide numerical simulations, which show a way superior behavior of the optimal and the adaptive test compared to the unregularized test described above. \Cref{sec:conclusion} provides some discussion of the obtained results and concludes the paper.
\rev{In \cref{sec:impl_conv,sec:impl_min,sec:norm_disc}, we present details of the numerical implementation of the performed simulations.}

\section{Regularized hypothesis testing}
\label{sec:reg_test}

As the initial test $\Psi_0$ in \eqref{eq:test_ub} for the hypothesis testing problem \eqref{eq:hypothesis_test} was based on the estimator $\left\langle \gobs, \Phi_0\right\rangle$ for the feature value $\left\langle \varphi, \udag\right\rangle$, it seems natural to consider tests based on other estimators. Estimating linear functionals in statistical inverse problems is a well-studied topic, see, e.g. \cite{a86,mp02,p17}. We restrict the analysis to linear estimators here, which can always be described in terms of a probe element $\Phi \in \Y$, with the corresponding estimator $\left\langle \gobs,\Phi\right\rangle$.

\rev{\begin{ex}\label{reg_plug_in} Plug-in estimators such as $\left\langle \varphi, \hat u_\beta\right\rangle$ with spectral estimators $\hat u_\beta = q_\beta\left(T^*T\right)T^*Y$ for $\udag$, where $q_\beta(\cdot)$ is a filter and $\beta > 0$ a regularization parameter, can be expressed in the above form. The computation
\[
\left\langle \varphi, \hat u_\beta\right\rangle = \left\langle \varphi, q_\beta \left(T^*T\right) T^* Y\right\rangle  = \left\langle q_\beta \left(TT^*\right) T\varphi,  Y\right\rangle,
\]
reveals $\left\langle \varphi, \hat u_\beta\right\rangle$ to be $\left\langle \Phi_\beta, Y\right\rangle$ with $\Phi_\beta = q_\beta \left(TT^*\right) T\varphi$ being the regularized solution of $T^*\Phi = \varphi$ using the given filter and regularization parameter.
\end{ex}}

\rev{The linear estimator $\left\langle \gobs,\Phi\right\rangle$ has the bias}
\begin{equation}
\label{eq:bias}
\bias\left(\Phi\right) := \E{\left\langle \gobs,\Phi\right\rangle} - \left\langle \varphi, \udag\right\rangle = \left\langle T^*\Phi - \varphi, \udag \right\rangle_{\X^* \times \X},
\end{equation}
which clearly vanishes if $T^*\Phi = \varphi$, i.e. the test $\Psi_0$ discussed in subsection \ref{sec:est_inf} is based on an unbiased estimator. However, in inverse problems it is well-known that good estimators require a careful trade-off between bias and variance, indicating that $\left\langle \gobs, \Phi_0\right\rangle$ --- being unbiased --- is not a reasonable estimator. The variance of the general linear estimator $\left\langle \gobs,\Phi\right\rangle$ can be computed as $\var\left(\Phi\right) := \Var{\left\langle \gobs,\Phi\right\rangle} = \sigma^2\Var{Z_\Phi}$. In the Gaussian case this simplifies to $\var\left(\Phi\right)= \sigma^2 \norm{\Phi}{\Y}^2$. The estimator $\left\langle \Phi,Y\right\rangle$ gives rise to a test $\Psi_{\Phi,c}$ for \eqref{eq:hypothesis_test} defined by
\begin{equation}\label{eq:test}
\Psi_{\Phi,c} \left(\gobs\right) := \mathbf 1_{\left\langle \gobs,\Phi\right\rangle> c}
.
\end{equation}
To ensure that $\Psi_{\Phi,c}$ has level $\alpha \in\left(0,1\right)$, one should choose
\begin{equation}\label{eq:c}
c \geq c \left(\Phi, \alpha\right) := \sigma q_{1-\alpha}\left(\Phi\right) + \abs{\bias\left(\Phi\right)},
\end{equation}
as then it holds
\begin{align*}
\P{0}{\left\langle \gobs,\Phi\right\rangle > c} = \P{}{\sigma Z_\Phi + \left\langle T \udag, \Phi\right\rangle \geq c} =\P{}{Z_\Phi \geq \sigma^{-1} \left(c - \bias \left(\Phi\right)\right)} \leq \alpha.
\end{align*}
However, $\udag$ is unknown and hence are $\bias \left(\Phi\right)$ and $c \left(\Phi, \alpha\right)$ in \eqref{eq:c}. Thus, a value $c$ satisfying \eqref{eq:c} cannot be determined in practice, which shows a central problem when introducing a bias in the test statistic. Note that, if one knows a priori that $\bias \left(\Phi\right) \leq 0$, then $\Psi_{\Phi,\sigma q_{1-\alpha}\left(\Phi\right)}$ is a level $\alpha$ test for \eqref{eq:hypothesis_test} in view of \eqref{eq:c}. From this point of view, the following considerations are only necessary if the bias can be positive.

A similar problem which has been studied for some time occurs in non-parametric regression when one tries to construct (honest) confidence bands. As discussed e.g. by \cite{bbh10,pbd15}, this problem can be overcome by so-called \emph{oversmoothing}, this is ensuring that the bias is smaller than the standard deviation. In our case this corresponds to choose $\Phi \in \Y$ such that $\bias\left(\Phi\right) \ll \sigma q_{1-\alpha} \left(\Phi\right)$ in the small noise limit $\sigma \searrow 0$ and then use $c := 2 \sigma q_{1-\alpha}\left(\Phi\right)$. However, this approach might be incompatible with (I1) and does further more not allow for finite sample guarantees. Another idea is to introduced a self-similarity condition in the sense that some norm of a lower frequency part behaves similar to the same norm of a higher frequency part of $\udag$. This idea has been introduced in \cite{gn10} and inspired a series of further publications later on (see, e.g. \cite{hn11,bn13} and the references therein). More recently, this assumption became known as the \textit{polished tail condition}, see \cite{svdVvZ15} and also \cite{iw20} for a recent survey in the Bayesian context. The approach considered below shares some similarity with these ideas, but the condition posed here is much simpler to understand and also takes into account the specific structure of the hypothesis testing problem here, see Assumption \ref{ass:smooth}.

If one does not know in advance whether $\bias \left(\Phi\right) \leq 0$, then one has to upper bound this term in \eqref{eq:c}. This is possible based on \textit{a priori} information on $\udag$, which is a common paradigm in inverse problems:
\begin{ass}
	\label{ass:smooth}
	\renewcommand{\theenumi}{(\arabic{enumi})}
	\renewcommand{\labelenumi}{\theenumi}
	\begin{enumerate}
		\item \label{ass_sm_1} There is a pair of Banach spaces $\left(\V, \V'\right)$ such that
		\[
		\left\langle v,u\right\rangle_{\X^* \times \X} \leq \left\Vert v \right\Vert_{\V'}\left\Vert u\right\Vert_{\V}  \qquad\text{for all} \qquad u \in \V \cap \X, v \in \V' \cap \X^*.
		\]
		\item \label{ass_sm_2} It holds $\udag \in \V \cap \X$ with $\norm{\udag}{\V} \leq 1$.
		\item \label{ass_sm_3} $\ran T^* \subseteq \V'$ and $T^* : \Y \to \V'$ is bounded.
		\item \label{ass_sm_4} It holds $\varphi \in \overline{\ran T^*}$, where the closure is taken in $\V'$.
	\end{enumerate}
\end{ass}

Item \ref{ass_sm_1} is a rather mild requirement on the structure of $\V$ and $\V'$, which are free to be chosen so far. Item \ref{ass_sm_2} can --- to some extend --- be seen as a spectral source condition (see, e.g. \cite{ehn96}), as it requires $\udag$ to be an element with bounded norm in a smoother space $\V$. However, it also allows for other situations of interest, e.g. that $\udag$ is a density on some domain $\Omega$ or is a bounded function. Items \ref{ass_sm_3} and \ref{ass_sm_4} require compatibility of $T$ and the feature functional $\varphi$ with this information.

\begin{rem}
	In comparison with the unregularized test, whose scope was restricted to $\phi \in \ran T^*$, \cref{ass:smooth} allows to consider features described by $\phi \in \overline{\ran T^*}$.
	Let us assume that $\X$ is a Hilbert space and consider an arbitrary $\phi \in \X \cong \X^*$. In this case, we can decompose $\phi = \phi_1 + \phi_2$ into two components
	\[ \phi_1 \in \overline{\ran T^*} = (\ker T)^\perp \quad \text{and} \quad \phi_2 \in (\ran T^*)^\perp = \ker T. \]
	By doing the same with $\udag$ and using orthogonality, we can express the feature as
	\[ \scalprod{\phi,\udag}{\X} = \scalprod{\phi_1,u_1^\dagger}{\X} + \scalprod{\phi_2,u_2^\dagger}{\X}. \]
	Since the component $u_2^\dagger$ of $\udag$ lies in $\ker T$, no information about it can be obtained even from an exact measurement $T\udag$. Consequently, no information can be obtained about the contribution $\scalprod{\phi_2,u_2^\dagger}{\X}$ of $u_2^\dagger$ to the feature.
	For this reason, it is not reasonable to try to perform inference for features $\phi \in \X^* \setminus \overline{\ran T^*}$ which have a nonzero component in $(\ran T^*)^\perp$. No information about them can be obtained from the measured data.
\end{rem}

Under Assumption \ref{ass:smooth} one can provide a universal choice for the critical value $c$ that ensures a prescribed level $\alpha$, which leads to the following central result:
\begin{thm}\label{thm1}
	Let Assumption \ref{ass:smooth} hold true, $\alpha \in\left(0,1\right)$, $\Phi \in \Y$, and choose
	\begin{equation}\label{eq:c_star}
	c^* := \sigma q_{1-\alpha}\left(\Phi\right) + \left\Vert T^*\Phi - \varphi \right\Vert_{\V'}.
	\end{equation}
	Then the test $\Psi_{\Phi, c^*}$ as in \eqref{eq:test} has level at most $\alpha$ for the testing problem \eqref{eq:hypothesis_test}.
\end{thm}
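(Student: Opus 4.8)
The plan is to show that the prescribed critical value dominates the (in general unavailable) value appearing in \eqref{eq:c}, that is $c^* \geq c\left(\Phi,\alpha\right) = \sigma q_{1-\alpha}\left(\Phi\right) + \abs{\bias\left(\Phi\right)}$; once this is established, the level bound is precisely the computation already carried out in connection with \eqref{eq:c}. In view of the choice \eqref{eq:c_star}, everything therefore reduces to the single \emph{a priori} bias estimate
\[
\abs{\bias\left(\Phi\right)} \leq \norm{T^*\Phi - \varphi}{\V'},
\]
and it is here that \cref{ass:smooth} enters.

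To prove this estimate I would first check that $v := T^*\Phi - \varphi$ is admissible for the pairing inequality in item \ref{ass_sm_1}. Indeed, $T^*\Phi \in \ran T^* \subseteq \V'$ by item \ref{ass_sm_3} and $T^*\Phi \in \X^*$ since $T^* \colon \Y \to \X^*$, while $\varphi \in \X^*$ by assumption and $\varphi \in \overline{\ran T^*} \subseteq \V'$ by item \ref{ass_sm_4}; hence $v \in \V' \cap \X^*$. By item \ref{ass_sm_2} we also have $\udag \in \V \cap \X$ with $\norm{\udag}{\V} \leq 1$. Applying item \ref{ass_sm_1} to the pairs $\left(v,\udag\right)$ and $\left(-v,\udag\right)$ and using the identity $\bias\left(\Phi\right) = \scalprod{T^*\Phi - \varphi, \udag}{\X^* \times \X}$ from \eqref{eq:bias}, I obtain
\[
\abs{\bias\left(\Phi\right)} \leq \norm{T^*\Phi - \varphi}{\V'}\,\norm{\udag}{\V} \leq \norm{T^*\Phi - \varphi}{\V'}.
\]

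To finish, I would insert this bound into the computation under $H_0$: since $\scalprod{\varphi,\udag}{\X^* \times \X} = 0$ on $H_0$, one has $\scalprod{T\udag,\Phi}{\Y} = \scalprod{T^*\Phi - \varphi, \udag}{\X^* \times \X} = \bias\left(\Phi\right)$, so that $\scalprod{\gobs,\Phi}{\Y} = \sigma Z_\Phi + \bias\left(\Phi\right)$ and
\[
\P{0}{\scalprod{\gobs,\Phi}{\Y} > c^*} = \P{}{Z_\Phi > \sigma^{-1}\left(c^* - \bias\left(\Phi\right)\right)} \leq \P{}{Z_\Phi > q_{1-\alpha}\left(\Phi\right)} \leq \alpha,
\]
where the first inequality combines $c^* - \bias\left(\Phi\right) \geq \sigma q_{1-\alpha}\left(\Phi\right)$ --- a direct consequence of the bias estimate and \eqref{eq:c_star} --- with monotonicity, and the second is the defining property of the $\left(1-\alpha\right)$-quantile of $Z_\Phi$. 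I do not expect a genuine obstacle: the statement is essentially the observation that $c^* \geq c\left(\Phi,\alpha\right)$. The only points that will need a little care are the membership $T^*\Phi - \varphi \in \V' \cap \X^*$ required to invoke item \ref{ass_sm_1} --- which is exactly what items \ref{ass_sm_3} and \ref{ass_sm_4} are there to secure, and in whose absence $\norm{T^*\Phi - \varphi}{\V'}$ would be infinite and the assertion vacuous --- together with the strict-versus-nonstrict inequality in the quantile step, which is handled exactly as in the derivation of \eqref{eq:c}.
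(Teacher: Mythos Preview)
Your proposal is correct and follows essentially the same route as the paper: bound the bias via \cref{ass:smooth} \ref{ass_sm_1}--\ref{ass_sm_4} to obtain $c^* \geq c(\Phi,\alpha)$, then invoke the level computation preceding \eqref{eq:c}. You are in fact slightly more careful than the paper, which only records $\bias(\Phi) \leq \norm{T^*\Phi - \varphi}{\V'}$ (without the absolute value) and does not spell out the membership $T^*\Phi - \varphi \in \V' \cap \X^*$; your two-sided bound and the explicit verification of admissibility are welcome additions but do not change the argument.
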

\begin{proof}
	Note that under Assumption \ref{ass:smooth} one has the bias estimate
	\begin{equation}\label{eq:bias_est}
	\bias \left(\Phi\right)  = \left\langle \udag,T^*\Phi - \varphi \right\rangle_{\X \times \X^*} \leq \left\Vert \udag\right\Vert_{\V}\left\Vert T^*\Phi - \varphi \right\Vert_{\V'}  \leq\left\Vert T^*\Phi - \varphi \right\Vert_{\V'}.
	\end{equation}
	Thus $c^* \geq c\left(\Phi, \alpha\right)$ and the claim follows.
\end{proof}

With the previous theorem at hand, it is now possible to construct a variety of tests for \eqref{eq:hypothesis_test}. More precisely, every probe function $\Phi \in \Y$ gives rise to a level $\alpha$ test. However, the power $\P{1}{\Psi_{\Phi, c^*} = 1}$ of the corresponding test depends clearly on $\Phi$, and it cannot be expected to show a good performance (in the sense of a large power) for arbitrary $\Phi$.

\rev{\begin{rem} We especially find that any of the plug-in tests from \cref{reg_plug_in}, calibrated with the corresponding critical value $c^*$ in \eqref{eq:c_star}, has at most level $\alpha$ independent of the chosen regularization parameter $\beta > 0$.\end{rem}}

\section{Optimal regularized hypothesis testing}
\label{sec:opt_reg_test}

Let us now discuss the choice of the probe functional $\Phi  \in \Y$. Under Assumption \ref{ass:smooth}, the test $\Psi_{\Phi, c^*}$ with $c^*$ as in \eqref{eq:c_star} is a level $\alpha$ test, no matter how $\Phi \in\Y$ was chosen. Therefore it seems reasonable to ask for the best possible $\Phi$ in terms of the test's power, which is given as
\begin{equation}
\label{eq:pow_reg}
\begin{split}
\P{1}{\Psi_{\Phi, c^*} = 1} &= \P{1}{\left\langle \gobs,\Phi\right\rangle \geq \sigma q_{1-\alpha}\left(\Phi\right) + \norm{T^*\Phi - \varphi}{\V'}} \\
&= \P{}{\sigma Z_\Phi + \left\langle \udag,T^*\Phi\right\rangle \geq \sigma q_{1-\alpha} \left(\Phi\right) + \norm{T^*\Phi - \varphi}{\V'}} \\
&= \P{}{Z_\Phi \geq q_{1-\alpha} \left(\Phi\right) + \frac{\norm{T^*\Phi - \varphi}{\V'} - \left\langle \udag,T^*\Phi\right\rangle}{\sigma}}
\end{split}
\end{equation}
where $\mathbb P_1$ denotes the law of $Y$ under the alternative $H_1$. 
From this point onward, we restrict our analysis to the case of Gaussian white noise.
\begin{ass}
	\label{ass:gauss_noise}
	$Z$ is a Gaussian white noise process, i.e., $Z_g \sim \mathcal N \big(0, \norm{g}{\Y}^2\big)$ for all $g \in \Y$ and $\Cov\left[ Z_{g_1}, Z_{g_2}\right] = \left\langle g_1, g_2\right\rangle$ for all $g_1, g_2 \in \Y$.
\end{ass}
Here, the power of the test $\Psi_{\Phi,c^*}$ is given by
\begin{equation}
	\label{eq:pow_reg_gauss}
	\P{1}{\Psi_{\Phi, c^*} = 1} = \P{}{\frac{\scalprod{Z,\Phi}{}}{\norm{\Phi}{\Y}} \geq q_{1-\alpha}^\Normal + \frac{\norm{T^*\Phi - \varphi}{\V'} - \left\langle \udag,T^*\Phi\right\rangle_{\X \times \X^*}}{\sigma\norm{\Phi}{\Y}}}.
\end{equation}
We introduce the class of functionals $J_{y}^\W$,
\begin{equation*}
	J_{y}^\W\left(\Phi\right) := \frac{\norm{T^*\Phi - \varphi}{\V'} -  \scalprod{y,\Phi}{\W^* \times \W}}{\norm{\Phi}{\W}} \qquad \text{for all}~\Phi \in \W \setminus \{0\},
\end{equation*}
where $\W \subseteq \Y$ is a Hilbert space and $y \in \W^* \supseteq \Y^*$.
Throughout, we set $J_{y}^\W(0) := \infty$.
The optimal probe functional $\Phi \in \Y$ can then be determined by minimizing
\begin{equation}
	\label{eq:opt_phi}
	J_{T\udag}^\Y\left(\Phi\right) := \frac{\left\Vert T^*\Phi - \varphi \right\Vert_{\V'} -  \left\langle T\udag,\Phi \right\rangle_\Y}{\left\Vert \Phi \right\Vert_{\Y}}.
\end{equation}
Throughout, we make the following assumptions on the space $\W$.
\begin{ass}
	\label{ass:W}
	$(\W, \norm{\cdot}{\W})$ is a Hilbert space that is dense and continuously embedded in $\Y$.
\end{ass}
This allows especially for the setting $\W = \Y$ as in \eqref{eq:opt_phi}, but it will turn out later that for adaptive testing (i.e. testing with unknown $\udag$), smaller spaces $\W$ are necessary. Precisely, the class $\{J_y^\W\}_{y,\W}$ contains the functional $J_{T\udag}^\Y$, where we identify $T\udag$ with $\scalprod{T\udag,\cdot}{\Y}$, but also allows for data $y$ corrupted by Gaussian white noise with a suitable choice of $\W$, which will be useful later.

\begin{rem}
	If $\phi \in \ran T^*$, $T^*\Phi_0 = \phi$, and $\scalprod{\udag,\phi}{} > 0$, then it follows from \eqref{eq:pow_reg_gauss} that the unregularized test $\Psi_{\Phi_0,c_0}$ with $c_0 = \sigma q_{1 - \alpha}^\Normal \norm{\Phi_0}{\Y}$ has power
	\begin{equation}
		\label{eq:power_unreg}
		\beta(\Phi_0) = Q\left(q_\alpha^\Normal + \frac{\scalprod{\udag,\varphi}{}}{\sigma\norm{\Phi_0}{\Y}}\right) > \alpha
	\end{equation}
	where $Q$ denotes the cumulative distribution function of the standard normal distribution.
	We see that the unregularized test always has non-trivial power, but the gap to the level of significance $\alpha$ may be arbitrarily small due to the ill-posedness of $T$ and an arbitrarily large norm of $\Phi_0$ as a consequence of it.
\end{rem}

We emphasize that $J_{T\udag}^\Y$ clearly depends on the unknown quantity $\udag$. For a moment, to theoretically investigate existence and computability of the optimal probe functional $\Phi \in \Y$, we will neglect this problem and assume $\udag$ to be known. Later on we will relax this and derive a heuristic approximation $J_Y^\W$ of $J_{T\udag}^\Y$ based on known quantities only. Secondly, on first glance it seems that the functional $J_{T\udag}^\Y$ does not have any favorable structure such as convexity, and hence its minimum is in general difficult do determine. However, we will now derive a relation to a convex functional which can be minimized by standard algorithms.

As mentioned before, suppose for this subsection that $\udag$ is known. As a first step, let us prove existence of an optimal $\Phi$ in the special case $y = T\udag$.
\begin{thm}
	\label{thm:Jsol}
	Suppose Assumptions \ref{ass:smooth} and \ref{ass:W} hold and that $\left\langle \udag, \varphi\right\rangle > 0$. Then there exists a global minimum of $J_{T\udag}^\W$ and the minimum is negative.
\end{thm}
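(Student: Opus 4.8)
The plan is to show that $J_{T\udag}^\W$ attains a negative minimum by combining a direct-method compactness argument with a scaling trick that removes the homogeneity of $J$. First I would observe that $J_{T\udag}^\W$ is invariant under positive scaling, $J_{T\udag}^\W(t\Phi) = J_{T\udag}^\W(\Phi)$ for $t > 0$, so that minimizing over $\W \setminus \{0\}$ is the same as minimizing over the unit sphere $\{\Phi \in \W : \norm{\Phi}{\W} = 1\}$; on that sphere $J_{T\udag}^\W(\Phi) = \norm{T^*\Phi - \varphi}{\V'} - \scalprod{T\udag,\Phi}{\Y}$, which is a genuine (non-fractional) functional. I would then exhibit a point where $J_{T\udag}^\W$ is negative: by Assumption \ref{ass:smooth}\ref{ass_sm_4} there is a sequence $\Phi_n \in \Y$ with $T^*\Phi_n \to \varphi$ in $\V'$, and for such $n$ large we have $\norm{T^*\Phi_n - \varphi}{\V'}$ small while $\scalprod{T\udag,\Phi_n}{\Y} = \scalprod{\Phi_n, T\udag}{} \to \scalprod{\varphi,\udag}{} > 0$ (using $\langle T^*\Phi_n - \varphi, \udag\rangle \to 0$ by Assumption \ref{ass:smooth}\ref{ass_sm_1}), so $J_{T\udag}^\W(\Phi_n) < 0$ eventually; this already shows $\inf J_{T\udag}^\W < 0$, and in particular the infimum is not attained "at infinity" in a way that matters because of the scaling.

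Next I would run the direct method on the unit sphere. Take a minimizing sequence $(\Phi_n)$ with $\norm{\Phi_n}{\W} = 1$; by reflexivity of the Hilbert space $\W$ it has a weakly convergent subsequence $\Phi_n \rightharpoonup \bar\Phi$ in $\W$. The term $\Phi \mapsto \scalprod{T\udag,\Phi}{\Y}$ is weakly continuous on $\W$: since $\W \hookrightarrow \Y$ continuously, weak convergence in $\W$ implies weak convergence in $\Y$, and $\scalprod{T\udag,\cdot}{\Y}$ is a bounded linear functional on $\Y$. For the term $\Phi \mapsto \norm{T^*\Phi - \varphi}{\V'}$ I would use that $T^* : \W \to \V'$ is bounded (Assumption \ref{ass:smooth}\ref{ass_sm_3} gives $T^*:\Y\to\V'$ bounded, and $\W\hookrightarrow\Y$), hence weak-to-weak continuous, so $T^*\Phi_n - \varphi \rightharpoonup T^*\bar\Phi - \varphi$ in $\V'$, and the norm $\norm{\cdot}{\V'}$ is weakly lower semicontinuous. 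Therefore $J_{T\udag}^\W(\bar\Phi) \le \liminf_n (\norm{T^*\Phi_n - \varphi}{\V'} - \scalprod{T\udag,\Phi_n}{\Y}) = \inf_{\norm{\Phi}{\W}=1} J_{T\udag}^\W < 0$. Finally I must check $\bar\Phi \neq 0$: if $\bar\Phi = 0$ then $T^*\Phi_n \rightharpoonup 0$ so $\liminf \norm{T^*\Phi_n - \varphi}{\V'} \ge \norm{\varphi}{\V'}$ and $\scalprod{T\udag,\Phi_n}{\Y} \to 0$, forcing $J_{T\udag}^\W(\bar\Phi)$-limit $\ge \norm{\varphi}{\V'} \ge 0$, contradicting negativity of the infimum. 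So $\bar\Phi$ lies on the closed unit ball but is nonzero; by the scaling invariance, $\bar\Phi/\norm{\bar\Phi}{\W}$ (or $\bar\Phi$ itself, whose value of $J$ only decreases or stays the same under normalization since $\norm{\bar\Phi}{\W} \le 1$) is a global minimizer of $J_{T\udag}^\W$ on $\W\setminus\{0\}$ with negative value.

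The main obstacle I anticipate is the $\V'$-norm term: $\V'$ is only a Banach space, not necessarily reflexive, so one cannot directly extract weak limits there, and weak lower semicontinuity of $\norm{\cdot}{\V'}$ needs the convergence $T^*\Phi_n \rightharpoonup T^*\bar\Phi$ to be a \emph{weak} convergence in $\V'$ — which is exactly what boundedness of $T^*:\W\to\V'$ together with weak convergence in $\W$ delivers (bounded linear operators between Banach spaces are weak-to-weak continuous), so this is fine, but it is the step that must be phrased carefully. The secondary technical point is the non-triviality of the weak limit ($\bar\Phi \ne 0$), handled above by using that negativity of the infimum is incompatible with $\bar\Phi = 0$; one should also note that boundedness of the minimizing sequence is automatic because we restricted to the unit sphere, which is why introducing the scaling reduction at the outset is the cleanest route.
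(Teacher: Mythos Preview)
Your argument is correct in spirit and, apart from one small omission, complete. The omission: when you exhibit a point with negative $J$-value you take $\Phi_n \in \Y$ from Assumption~\ref{ass:smooth}\ref{ass_sm_4}, but $J_{T\udag}^\W$ is only defined on $\W$; you need one more line invoking the density of $\W$ in $\Y$ (Assumption~\ref{ass:W}) together with the continuity of $T^*:\Y\to\V'$ to replace $\Phi_n$ by elements of $\W$. The paper does exactly this.

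Beyond that, your route differs from the paper's in a clean and instructive way. You exploit the positive homogeneity of $J_{T\udag}^\W$ to normalize and work on the unit sphere of $\W$, which makes boundedness of the minimizing sequence automatic and turns the functional into a non-fractional one; weak lower semicontinuity then applies directly. The paper instead keeps the fraction, derives an a~priori bound $\tau\norm{\Phi_n}{\W}\le\langle\udag,\varphi\rangle$ from $J_{T\udag}^\W(\Phi_n)\le -\tau$ and Assumption~\ref{ass:smooth}\ref{ass_sm_1}--\ref{ass_sm_2}, and then passes to the limit in the quotient via the elementary inequality $\liminf a_n/b_n \ge (\liminf a_n)/(\liminf b_n)$ for $a_n<0$, $b_n>0$. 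Your scaling reduction is arguably more transparent here; the paper's a~priori-bound technique, on the other hand, is what later generalizes to the perturbed functional $J_y^\W$ in \cref{sol_approx}, where one genuinely needs a quantitative bound to absorb the perturbation $\norm{y-T\udag}{\W^*}$.
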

\begin{proof}
By assumption, there is a sequence $\left(v_n \right)_{n \in \N} \subset \Y$ such that $T^*v_n \to \varphi$ in $\V'$ as $n \to \infty$. By the density of $\W$ in $\Y$, we can approximate $(v_n)_{n\in\N}$ by a sequence $(w_n)_{n\in\N} \subset \W$ such that $T^*w_n \to \phi$  in $\V'$ as well.
It follows from \cref{ass:smooth} \ref{ass_sm_1} that
\[
\norm{w_n}{\W} J_{T\udag}^\W(w_n) = \left\Vert T^*w_n - \varphi \right\Vert_{\V'} -  \left\langle \udag,T^*w_n\right\rangle_{\X \times \X^*} \to - \left\langle \udag, \varphi\right\rangle_{\X \times \X^*} < 0,
\]
which implies that there exists $n \in \N$ for which $w_n\ne 0$ and $J_{T\udag}^\Y(w_n) < 0$ as well. This proves negativity of the infimum $\inf_{\Phi \in \W \setminus \{0\}} J_{T\udag}^\W\left(\Phi\right)$.

Now let $\left(\Phi_n \right)_{n \in \N} \subset \W \setminus \{0\}$ be a minimizing sequence for $J_{T\udag}^\W$. W.l.o.g.~there is $\tau > 0$ such that $J_{T\udag}^\W \left(\Phi_n\right) \leq - \tau$ for all $n \in \N$. This implies
\begin{equation}
	\label{eq:aux1}
	\left\Vert T^*\Phi_n - \varphi \right\Vert_{\V'} -  \left\langle \udag,T^*\Phi_n\right\rangle + \tau \norm{\Phi_n}{\W} \leq 0.
\end{equation}
We conclude
\begin{equation*}
	\left\Vert T^*\Phi_n - \varphi \right\Vert_{\V'} + \tau \norm{\Phi_n}{\W} \leq \left\langle \udag,T^*\Phi_n\right\rangle = \left\langle \udag,T^*\Phi_n - \varphi + \varphi\right\rangle
	\leq \left\Vert T^*\Phi_n - \varphi \right\Vert_{\V'} + \left\langle \udag,\varphi\right\rangle
\end{equation*}
using \cref{ass:smooth} \ref{ass_sm_1} and \ref{ass_sm_2}, and hence
\[
\tau \norm{\Phi_n}{\W} \leq \left\langle \udag,\varphi\right\rangle.
\]
This shows that the minimizing sequence is bounded. As $\W$ is a Hilbert space, this implies the existence of a subsequence that converges weakly towards an element $\Phi \in \W$. It follows from \eqref{eq:aux1}, \cref{ass:smooth} \ref{ass_sm_3}, and the weak lower semicontinuity of the norm that
\begin{equation*}
	\left\Vert T^*\Phi - \varphi \right\Vert_{\V'} - \left\langle \udag,T^*\Phi\right\rangle + \tau \norm{\Phi}{\W}
	\leq \liminf_{n\to\infty} \left( \left\Vert T^*\Phi_n - \varphi \right\Vert_{\V'} -  \left\langle \udag,T^*\Phi_n\right\rangle + \tau \norm{\Phi_n}{\W} \right) \leq 0,
\end{equation*}
which shows that $\Phi \neq 0$.
For any negative sequence $(a_n)_{n\in\N}$ and positive sequence $(b_n)_{n\in\N}$ we have the estimate
\begin{equation}
	\label{eq:liminf_ineq}
	\liminf_{n\to\infty} \frac{a_n}{b_n} = -\limsup_{n\to\infty} \frac{-a_n}{b_n} \ge -\limsup_{n\to\infty} (-a_n) \limsup_{n\to\infty} \frac{1}{b_n}
	= \frac{\liminf_{n\to\infty} a_n}{\liminf_{n\to\infty} b_n}.
\end{equation}
Now, it follows from the minimizing property of $(\Phi_n)_{n\in\N}$, the negativity of $J_{T\udag}^\W(\Phi_n)$ for all $n \in \N$, the positivity of $\norm{\Phi_n}{\W}$, and the weak lower semicontinuity of the norm that
\begin{align*}
	\inf_{\Phi' \in \Y} J_{T\udag}^\W(\Phi') &= \liminf_{n\to\infty} J_{T\udag}^\W(\Phi_n) \ge \frac{\liminf_{n\to\infty} \left(\norm{T^*\Phi_n - \phi}{\V'} - \scalprod{\udag,T^*\Phi_n}{}\right)}{\liminf_{n\to\infty} \norm{\Phi_n}{\W}} \\
	&\ge \frac{\norm{T^*\Phi - \phi}{\V'} - \scalprod{\udag,T^*\Phi}{}}{\norm{\Phi}{\W}} = J_{T\udag}^\W(\Phi).
\end{align*}
That is, $\Phi$ is a minimizer of $J_{T\udag}^\W$.
\end{proof}

\begin{rem}
	For $\W = \Y$, the negativity of the infimum implies that the corresponding test $\Psi_{\Phi, c^*}$ with $c^*$ as in \eqref{eq:c_star} always has a non-trivial power.
\end{rem}

\begin{thm}
	Let $\Phi^\dagger \in \Y \setminus \{0\}$ be a global minimizer of $J_{T\udag}^\Y$. Then the power $\beta(\Phi^\dagger)$ of the test $\Psi_{\Phi^\dagger, c^\dagger}$ with
	\[ c^\dagger = \sigma q_{1 - \alpha}^\Normal \norm{\Phi^\dagger}{\Y} + \norm{T^*\Phi^\dagger - \phi}{\V'} \]
	is given by
	\begin{equation}
		\label{eq:power_oracle}
		\beta(\Phi^\dagger) = Q\left(q_{\alpha}^\Normal - \frac{J_{T\udag}^\Y(\Phi^\dagger)}{\sigma}\right) > \alpha,
	\end{equation}
	and the power is maximal among all tests $\Psi_{\Phi,c^*}$ with $\Phi \in \Y$, $\Phi \neq 0$, and $c^*$ as in \eqref{eq:c_star}.
	In particular, we have $\beta(\Phi^\dagger) \ge \beta(\Phi_0)$ if $\varphi = T^*\Phi_0$ for the power $\beta(\Phi_0)$ of the unregularized test $\Psi_{\Phi_0,c_0}$ with $c_0 = \sigma q_{1 - \alpha}^\Normal \norm{\Phi_0}{\Y}$.
\end{thm}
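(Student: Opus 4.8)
The plan is to reduce the whole statement to the explicit power formula \eqref{eq:pow_reg_gauss} together with the strict monotonicity of the standard normal cumulative distribution function $Q$; there is no real obstacle here, only some careful bookkeeping of quantile identities and the verification that the unregularized test is a genuine member of the family over which optimality is claimed.

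First I would record that, under \cref{ass:gauss_noise}, for every $\Phi \in \Y \setminus \{0\}$ the normalized noise $\scalprod{Z,\Phi}{}/\norm{\Phi}{\Y}$ is standard normal, and that the threshold on the right-hand side of \eqref{eq:pow_reg_gauss} is exactly $q_{1-\alpha}^\Normal + J_{T\udag}^\Y(\Phi)/\sigma$, since $\scalprod{\udag,T^*\Phi}{\X \times \X^*} = \scalprod{T\udag,\Phi}{\Y}$. Hence for any such $\Phi$ the power of $\Psi_{\Phi,c^*}$ equals
\[
1 - Q\!\left( q_{1-\alpha}^\Normal + \frac{J_{T\udag}^\Y(\Phi)}{\sigma} \right) = Q\!\left( q_\alpha^\Normal - \frac{J_{T\udag}^\Y(\Phi)}{\sigma} \right),
\]
where I used $1-Q(x) = Q(-x)$ and $-q_{1-\alpha}^\Normal = q_\alpha^\Normal$. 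Specializing to $\Phi = \Phi^\dagger$, and noting that $\norm{T^*\Phi^\dagger - \phi}{\V'}$ is precisely the bias term in \eqref{eq:c_star} so that $c^\dagger$ is the value $c^*$ attached to $\Phi^\dagger$, yields the formula \eqref{eq:power_oracle}. The strict inequality $\beta(\Phi^\dagger) > \alpha$ then follows because $Q$ is strictly increasing and $J_{T\udag}^\Y(\Phi^\dagger) < 0$ by \cref{thm:Jsol}, so $\beta(\Phi^\dagger) = Q(q_\alpha^\Normal - J_{T\udag}^\Y(\Phi^\dagger)/\sigma) > Q(q_\alpha^\Normal) = \alpha$. (The distinction between $>$ and $\ge$ in the test's indicator is irrelevant because the Gaussian law is continuous.)

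For the maximality claim I would invoke the displayed identity once more: the power of every admissible test $\Psi_{\Phi,c^*}$ is the strictly increasing function $\Phi \mapsto Q(q_\alpha^\Normal - J_{T\udag}^\Y(\Phi)/\sigma)$ of the scalar $-J_{T\udag}^\Y(\Phi)$, so maximizing the power over $\Phi \in \Y \setminus \{0\}$ is equivalent to minimizing $J_{T\udag}^\Y$ over $\Phi \in \Y \setminus \{0\}$. Since $\Phi^\dagger$ is by hypothesis a global minimizer of $J_{T\udag}^\Y$, this gives $\beta(\Phi^\dagger) \ge \beta(\Phi)$ for every admissible $\Phi$.

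Finally, for the comparison with the unregularized test I would observe that if $\varphi = T^*\Phi_0$ then the bias term $\norm{T^*\Phi_0 - \phi}{\V'}$ vanishes, so the unregularized critical value $c_0 = \sigma q_{1-\alpha}^\Normal \norm{\Phi_0}{\Y}$ coincides with the value $c^*$ from \eqref{eq:c_star} associated with $\Phi_0$; hence $\Psi_{\Phi_0,c_0}$ belongs to the family over which optimality was just shown, and $\beta(\Phi^\dagger) \ge \beta(\Phi_0)$ is immediate. As a consistency check, $J_{T\udag}^\Y(\Phi_0) = -\scalprod{\udag,\phi}{}/\norm{\Phi_0}{\Y}$, which turns the displayed power formula into \eqref{eq:power_unreg}. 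The only delicate points are thus the quantile/cdf symmetries and the observation that $c_0$ is a legitimate instance of $c^*$, so that the unregularized test genuinely lies in the compared family.
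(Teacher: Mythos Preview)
Your proof is correct and follows essentially the same approach as the paper: both derive the power formula for arbitrary $\Phi$ from \eqref{eq:pow_reg_gauss} via the identities $1-Q(x)=Q(-x)$ and $-q_{1-\alpha}^{\mathcal N}=q_\alpha^{\mathcal N}$, invoke the monotonicity of $Q$ together with the negativity of $J_{T\udag}^\Y(\Phi^\dagger)$ from \cref{thm:Jsol} for the strict inequality, and conclude maximality from the minimizing property of $\Phi^\dagger$. Your treatment is in fact slightly more explicit than the paper's, spelling out why $c_0$ is an instance of $c^*$ so that the unregularized test lies in the compared family.
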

\begin{proof}
	For any $\Phi \in \Y \setminus \{0\}$, the test $\Phi_{\Phi,c^*}$ has power
	\[ \beta(\Phi) = \P{1}{\Psi_{\Phi,c^*}(Y) = 1} = 1 - Q\left(q_{1 - \alpha}^\Normal + \frac{J_\udag(\Phi)}{\sigma}\right)
	= Q\left(q_{\alpha}^\Normal - \frac{J_\udag(\Phi)}{\sigma}\right) \]
	according to \eqref{eq:pow_reg_gauss}.
	Now, the power of $\Psi_{\Phi^\dagger,c^\dagger}$ is maximal by the choice of $\Phi^\dagger$.
	Moreover, the minimum $J_{T\udag}^\Y(\Phi^\dagger)$ is negative by \cref{thm:Jsol} and hence $\beta(\Phi^\dagger) > \alpha$ due to the monotonicity of $Q$.
\end{proof}

\rev{\begin{rem} The power of $\Psi_{\Phi^\dagger, c^*}$ is especially at least as large as the power of any of the plug-in tests from \cref{reg_plug_in}, no matter how the regularization parameter $\beta > 0$ or the filter $q_\beta(\cdot)$ is chosen.\end{rem}}

\section{Adaptive testing for unknown $\udag$}
\label{sec:adaptive}

Let us now return to the practical situation that $\udag$ is unknown. In this case, $J_{T\udag}^\Y$ in \eqref{eq:opt_phi} is also unknown and $\Phi$ cannot be found as its minimizer. A first attempt to overcome this would be to approximate
\[
J_{T\udag}^\Y \left(\Phi\right) = \frac{\left\Vert T^*\Phi - \phi \right\Vert_{\V'} - \left\langle T\udag,\Phi\right\rangle_{\Y}}{\left\Vert \Phi \right\Vert_{\Y}}
\]
by
\begin{equation}
	\label{eq:naive_approx}
	\frac{\left\Vert T^*\Phi - \phi \right\Vert_{\V'} -  \left\langle Y,\Phi\right\rangle}{\left\Vert \Phi \right\Vert_{\Y}}.
\end{equation}
However, the linear functional $\scalprod{Y,\cdot}{}$ is almost surely unbounded. Due to this lack of continuity properties, it is a priori not possible to show existence of a minimizer of the functional in \eqref{eq:naive_approx}. Constructing a test by minimizing it is therefore unfeasable.
We resolve this difficulty by restricting the functional to a dense, continuously embedded subspace $\calZ \subseteq \Y$ such that the data $Y$ almost surely is a bounded linear functional on $\calZ$.
\begin{ass}
	\label{ass:Z}
	$(\calZ, \norm{\cdot}{\calZ})$ is a Hilbert space that is dense and continuously embedded in $\Y$ such that $Z \in \calZ^*$ almost surely.
\end{ass}
For $\Phi \in \calZ \setminus \{0\}$, we can now almost surely identify this functional with
\[
\frac{\left\Vert T^*\Phi - \phi \right\Vert_{\V'} - \left\langle Y,\Phi\right\rangle_{\calZ^* \times \calZ}}{\left\Vert \Phi \right\Vert_{\Y}} = \frac{\norm{\Phi}{\calZ}}{\norm{\Phi}{\Y}} J_Y^\calZ(\Phi).
\]
In order to be able to draw a connection to a convex optimization problem later in \cref{sec:computability}, it is, however, preferable to instead choose the probe functional $\Phi \in \calZ$ as minimizer of the unweighted empirical objective functional
\[ J_Y^\calZ(\Phi) = \frac{\norm{T^*\Phi - \phi}{\V'} - \scalprod{Y,\Phi}{\calZ^* \times \calZ}}{\norm{\Phi}{\calZ}}. \]
In contrast with $J_{T\udag}^\calZ$, the existence of a minimizer of $J_Y^\calZ$ is no longer guaranteed.
\begin{thm}
	\label{sol_approx}
	Suppose that Assumptions \ref{ass:smooth} and \ref{ass:W} hold and that $\scalprod{\udag,\phi}{} > 0$. If $y \in \W^*$ satisfies
	\begin{equation}
		\label{eq:cond_data}
		\norm{y - T\udag}{\W^*} < -\frac12\min_{\Phi \in \W} J_{T\udag}^\W(\Phi),
	\end{equation}
	then a global minimum of $J_y^\W$ exists and this is negative.
\end{thm}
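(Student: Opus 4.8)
The plan is to regard $J_y^\W$ as a small perturbation of $J_{T\udag}^\W$ and then to repeat the argument of \cref{thm:Jsol}. Since $T\udag \in \Y \cong \Y^*$ embeds continuously into $\W^*$, one has for every $\Phi \in \W\setminus\{0\}$
\[
J_y^\W(\Phi) = J_{T\udag}^\W(\Phi) + \frac{\scalprod{T\udag - y,\Phi}{\W^*\times\W}}{\norm{\Phi}{\W}}, \qquad \text{hence} \qquad \bigl|J_y^\W(\Phi) - J_{T\udag}^\W(\Phi)\bigr| \le \delta := \norm{y - T\udag}{\W^*}.
\]
Writing $m := \min_{\Phi\in\W} J_{T\udag}^\W(\Phi)$, which exists and satisfies $m < 0$ by \cref{thm:Jsol}, the hypothesis \eqref{eq:cond_data} reads $\delta < -\frac12 m$. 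Evaluating $J_y^\W$ at a minimizer $\Phi^\dagger \in \W\setminus\{0\}$ of $J_{T\udag}^\W$ already yields $\inf_{\W} J_y^\W \le J_y^\W(\Phi^\dagger) \le m + \delta < \tfrac12 m < 0$, which gives the asserted negativity — provided a minimizer of $J_y^\W$ exists.

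For existence, I would fix $\tau$ in the interval $(\delta,\,-m-\delta)$, which is nonempty precisely because $\delta < -\frac12 m$. Then $-\tau > m + \delta \ge \inf_\W J_y^\W$, so there is a minimizing sequence $(\Phi_n)_{n\in\N}\subset\W\setminus\{0\}$ with $J_y^\W(\Phi_n)\le-\tau$ for all $n$, i.e.
\[
\norm{T^*\Phi_n - \varphi}{\V'} - \scalprod{y,\Phi_n}{\W^*\times\W} + \tau\norm{\Phi_n}{\W} \le 0.
\]
Splitting $\scalprod{y,\Phi_n}{} = \scalprod{T\udag,\Phi_n}{} + \scalprod{y-T\udag,\Phi_n}{}$, estimating $\scalprod{T\udag,\Phi_n}{} = \scalprod{\udag,T^*\Phi_n-\varphi+\varphi}{\X\times\X^*} \le \norm{T^*\Phi_n-\varphi}{\V'} + \scalprod{\udag,\varphi}{}$ via \cref{ass:smooth} \ref{ass_sm_1} and \ref{ass_sm_2}, and bounding $\scalprod{y-T\udag,\Phi_n}{}\le\delta\norm{\Phi_n}{\W}$, the displayed inequality collapses to $(\tau-\delta)\norm{\Phi_n}{\W}\le\scalprod{\udag,\varphi}{}$; since $\tau>\delta$ this shows that $(\Phi_n)$ is bounded, and as $\W$ is a Hilbert space we may pass to a subsequence with $\Phi_n\rightharpoonup\Phi$ in $\W$.

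It then remains to identify $\Phi$ as a minimizer, which goes exactly as in \cref{thm:Jsol}. The operator $T^*\colon\W\to\V'$ is bounded (by \cref{ass:smooth} \ref{ass_sm_3} together with the embedding $\W\hookrightarrow\Y$), hence weak-to-weak continuous; the functional $\scalprod{y,\cdot}{\W^*\times\W}$ is weakly continuous; and the norms are weakly lower semicontinuous. Passing to the limit in the inequality above yields $\norm{T^*\Phi-\varphi}{\V'} - \scalprod{y,\Phi}{} + \tau\norm{\Phi}{\W}\le0$, whose left-hand side would equal $\norm{\varphi}{\V'}>0$ if $\Phi=0$ (note $\scalprod{\udag,\varphi}{}>0$ forces $\varphi\neq0$), so $\Phi\neq0$. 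Finally I would apply the elementary estimate \eqref{eq:liminf_ineq} to $J_y^\W(\Phi_n)=a_n/b_n$ with $a_n:=\norm{T^*\Phi_n-\varphi}{\V'}-\scalprod{y,\Phi_n}{}<0$ and $b_n:=\norm{\Phi_n}{\W}>0$, combined with $\liminf_n a_n\ge\norm{T^*\Phi-\varphi}{\V'}-\scalprod{y,\Phi}{}$ and $\liminf_n b_n\ge\norm{\Phi}{\W}>0$, to obtain $\inf_\W J_y^\W=\lim_n J_y^\W(\Phi_n)\ge J_y^\W(\Phi)$, so that $\Phi$ is a global minimizer and $J_y^\W(\Phi)=\inf_\W J_y^\W<0$. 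The only genuinely delicate point is the choice of $\tau$: it must be small enough, $\tau < -m-\delta$, for a minimizing sequence to stay below $-\tau$, and simultaneously strictly larger than $\delta$ for the coercivity estimate to close — and the hypothesis $\delta < -\frac12\min_\W J_{T\udag}^\W$ is exactly what reconciles these two requirements.
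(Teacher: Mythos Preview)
Your proof is correct and follows essentially the same route as the paper: both show $\inf_\W J_y^\W \le m+\delta<0$ by evaluating at a minimizer $\Phi^\dagger$ of $J_{T\udag}^\W$, then run the direct-method argument of \cref{thm:Jsol} with the extra splitting $\scalprod{y,\Phi_n}{}=\scalprod{T\udag,\Phi_n}{}+\scalprod{y-T\udag,\Phi_n}{}$ to close the coercivity estimate. The only cosmetic difference is that the paper fixes the specific value $\tau=-m-\delta$ (so that $\tau-\delta=-m-2\delta>0$ directly from the hypothesis), whereas you pick any $\tau\in(\delta,-m-\delta)$; both choices yield the same bound.
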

\begin{proof}
By \cref{thm:Jsol}, there exists a minimizer $\Phi^\dagger \in \W$ of $J_{T\udag}^\W$ and $J_{T\udag}^\W(\Phi^\dagger)$ is negative. By assumption,
\begin{equation}\label{eq_tau_lower_bound}
\tau := -J_{T\udag}^\W(\Phi^\dagger) - \norm{y - T\udag}{\W^*} \ge -J_{T\udag}^\W(\Phi^\dagger) - 2\norm{y - T\udag}{\W^*} > 0.
\end{equation}
This implies that
\begin{equation}
\label{eq:est_approx_sol}
\begin{aligned}
	\inf_{\Phi \in \W} J_y^\W(\Phi) &\le J_y^\W\left(\Phi^\dagger\right)
	= J_{T\udag}^\W\left(\Phi^\dagger\right) - \scalprod{y - T\udag, \frac{\Phi^\dagger}{\norm{\Phi^\dagger}{\W}}}{\W^* \times W} \\
	&\le J_{T\udag}^\W\left(\Phi^\dagger\right) + \norm{y - T\udag}{\W^*} = -\tau < 0.
\end{aligned}
\end{equation}

Now we proceed similar as in the proof of \cref{thm:Jsol}. Let $\left(\Phi_n \right)_{n \in \N} \subset \W \setminus \{0\}$ be a minimizing sequence. W.l.o.g.~we can choose it such that $J_{y}^\W \left(\Phi_n\right) \leq -\tau$ for all $n \in \N$. This implies
\begin{equation}
	\label{eq:aux2}
	\left\Vert T^*\Phi_n - \varphi \right\Vert_{\V'} - \left\langle y,\Phi_n\right\rangle + \tau \norm{\Phi_n}{\W} \leq 0.
\end{equation}
We conclude
\begin{align*}
	\norm{T^*\Phi_n - \varphi}{\V'} + \tau\norm{\Phi_n}{\W} &\leq \scalprod{y,\Phi_n}{}
	= \scalprod{y - T\udag,\Phi_n}{} + \scalprod{\udag,T^*\Phi_n - \phi}{} + \scalprod{\udag,\phi}{} \\
	&\le \norm{y - T\udag}{\W^*} \norm{\Phi_n}{\W} + \norm{T^*\Phi_n - \phi}{\V'} + \scalprod{\udag,\phi}{}
\end{align*}
using \cref{ass:smooth} \ref{ass_sm_1} and \ref{ass_sm_2}, and hence
\[
\left(\tau - \norm{y - T\udag}{\W^*}\right) \norm{\Phi_n}{\W} \leq \left\langle \udag,\varphi\right\rangle.
\]
This shows that the minimizing sequence is bounded since $\tau - \norm{y - T\udag}{\W^*} > 0$ by assumption, compare \eqref{eq_tau_lower_bound}. As $\W$ is a Hilbert space, this implies the existence of a subsequence that converges weakly towards an element $\Phi \in \W$. It follows from \eqref{eq:aux2}, \cref{ass:smooth} \ref{ass_sm_3}, and the lower semicontinuity of the norm that
\begin{equation*}
	\left\Vert T^*\Phi - \varphi \right\Vert_{\V'} - \left\langle y,\Phi\right\rangle + \tau \norm{\Phi}{\W}
	\leq \liminf_{n\to\infty} \left( \left\Vert T^*\Phi_n - \varphi \right\Vert_{\V'} -  \left\langle y,\Phi_n\right\rangle + \tau \norm{\Phi_n}{\W} \right) \leq 0,
\end{equation*}
which shows that $\Phi \neq 0$.
Now, it follows from \eqref{eq:liminf_ineq} that
\begin{align*}
	\inf_{\Phi' \in \Y} J_y^\W(\Phi') &= \liminf_{n\to\infty} J_y^\W(\Phi_n) \ge \frac{\liminf_{n\to\infty} \left(\norm{T^*\Phi_n - \phi}{\V'} - \scalprod{y,\Phi_n}{}\right)}{\liminf_{n\to\infty} \norm{\Phi_n}{\W}} \\
	&\ge \frac{\norm{T^*\Phi - \phi}{\V'} - \scalprod{y,\Phi}{}}{\norm{\Phi}{\W}} = J_y^\W(\Phi).
\end{align*}
That is, $\Phi$ is a minimizer of $J_{y}^\W$.
\end{proof}

\begin{cor}
	\label{prob_sol_approx}
	Suppose that Assumptions \ref{ass:smooth} and \ref{ass:Z} hold and that $\scalprod{\udag,\phi}{} > 0$. 
	Then the probability that $J_Y^\calZ$ has a minimum and this minimum is negative is bounded from below by
	\[ \P{}{\norm{Z}{\calZ^*} < -\frac{1}{2\sigma} \min_{\Phi \in \calZ \setminus \{0\}} J_{T\udag}^\calZ(\Phi)}. \]
	In particular, this probability converges to $1$ as $\sigma \to 0$.
\end{cor}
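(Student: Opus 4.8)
The plan is to derive this directly from \cref{sol_approx}, applied with $\W = \calZ$ and the random data $y = Y = T\udag + \sigma Z$. First I would observe that \cref{ass:Z} is precisely \cref{ass:W} for $\calZ$ together with the extra information that $Z \in \calZ^*$ almost surely; since $T\udag \in \Y \hookrightarrow \calZ^*$ (the embedding $\calZ \hookrightarrow \Y$ being continuous and dense, so that $\Y = \Y^* \hookrightarrow \calZ^*$), it follows that $Y \in \calZ^*$ almost surely, so that $J_Y^\calZ$ is almost surely well defined, and that $\norm{Z}{\calZ^*}$ is an almost surely finite, nonnegative random variable. Moreover, since $\scalprod{\udag,\phi}{} > 0$, \cref{thm:Jsol} applied with $\W = \calZ$ guarantees that $J_{T\udag}^\calZ$ attains a global minimum and that $m := \min_{\Phi \in \calZ \setminus \{0\}} J_{T\udag}^\calZ(\Phi) < 0$, so that for fixed $\sigma > 0$ the quantity $-\frac{1}{2\sigma} m$ is a fixed positive number.

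Next, consider the event $E_\sigma := \{\, \norm{Z}{\calZ^*} < -\frac{1}{2\sigma} m \,\}$. On $E_\sigma$ one has $\norm{Y - T\udag}{\calZ^*} = \sigma \norm{Z}{\calZ^*} < -\frac12 m$, which is exactly condition \eqref{eq:cond_data} of \cref{sol_approx} with $y = Y$ and $\W = \calZ$. Hence, on $E_\sigma$, the functional $J_Y^\calZ$ admits a global minimum and this minimum is negative. Since $Z \mapsto \norm{Z}{\calZ^*}$ is measurable on the almost sure event $\{Z \in \calZ^*\}$, the event $E_\sigma$ is measurable, and therefore the probability that $J_Y^\calZ$ has a negative global minimum is at least $\P{}{E_\sigma}$, which is precisely the claimed lower bound.

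For the limit $\sigma \to 0$ I would argue as follows: because $m < 0$ does not depend on $\sigma$, the threshold $-\frac{1}{2\sigma} m$ tends to $+\infty$ as $\sigma \searrow 0$, so continuity of measure from below together with the almost sure finiteness of $\norm{Z}{\calZ^*}$ gives $\P{}{E_\sigma} \to \P{}{\norm{Z}{\calZ^*} < \infty} = 1$, whence the lower bound, and thus the probability in question, converges to $1$. I do not expect a genuine obstacle here: the statement is a probabilistic reformulation of \cref{sol_approx} for the particular perturbation $Y - T\udag = \sigma Z$, and the only points requiring (minor) care — measurability of $E_\sigma$ and almost sure finiteness of $\norm{Z}{\calZ^*}$ — are immediate from \cref{ass:Z}, while the negativity and finiteness of $m$ are furnished by \cref{thm:Jsol}.
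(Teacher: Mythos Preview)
Your proposal is correct and follows the same approach as the paper: rewrite $\norm{Y - T\udag}{\calZ^*} = \sigma\norm{Z}{\calZ^*}$ via the model \eqref{eq:model}, observe that the event $E_\sigma$ is precisely condition \eqref{eq:cond_data} with $\W = \calZ$ and $y = Y$, and invoke \cref{sol_approx}. The paper's proof is terser (it omits the measurability remarks, the explicit appeal to \cref{thm:Jsol} for $m < 0$, and the continuity-of-measure argument for $\sigma \to 0$), but the substance is identical.
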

\begin{proof}
	By \eqref{eq:model},
	\[ \sigma\norm{Z}{\calZ^*} = \norm{Y - T\udag}{\calZ^*} < -\frac12\min_{\Phi \in \calZ \setminus \{0\}} J_{T\udag}^\calZ(\Phi) \]
	with probability
	\[ \P{}{\norm{Z}{\calZ^*} < -\frac{1}{2\sigma} \min_{\Phi \in \calZ} J_{T\udag}^\calZ(\Phi)}. \]
	In this case, a minimum of $J_Y^\calZ$ exists by \cref{sol_approx}.
\end{proof}

\begin{rem}
	For a separable Hilbert space $\Y$, a dense, continuously embedded subspace $\calZ$ upon which the Gaussian white noise $Z$ can a.s. be identified with a bounded linear functional can be constructed using any orthonormal basis $f_k$ of $\Y$ and any positive square-summable sequence $(\omega_k)_{k\in\N}$ by
	\[ \calZ = \left\{ z \in \Y: \sum_{k=1}^\infty \omega_k^{-2} \abs{\scalprod{z,f_k}{\Y}}^2 < \infty \right\}. \]
	Its dual space is then given by
	\[ \calZ^* = \left\{ z' \in L(\Y,\bbR): \sum_{k=1}^\infty \omega_k^{2} \abs{\scalprod{z',f_k}{}}^2 < \infty \right\}. \]
	By the independence of the evaluations of $Z$ in $f_k$ and Jensen's inequality, we then have
	\[ \E{\norm{Z}{\calZ^*}}^2 \le \E{\norm{Z}{\calZ^*}^2} = \sum_{k=1}^\infty \omega_k^2\E{\abs{\scalprod{Z,f_k}{}}^2} = \sum_{k=1}^\infty \omega_k^2 < \infty, \]
	so that $Z \in \calZ^*$ almost surely, see Appendix 7.4 in \cite{Nickl2020Bernstein}.
	Note that here, the boundedness of $Z$ on $\calZ$ follows from the nuclearity of the embedding $\calZ \hookrightarrow \Y$.
	By Theorem 2.1.20 in \cite{gn2015}, $Z$ moreover satisfies the concentration inequality
	\[ \P{}{\abs{\norm{Z}{\calZ^*} - \E{\norm{Z}{\calZ^*}}} > \tau} \le 2\exp \left(-\frac{\tau^2}{2\pi^2 \sup_{k\in\N} \omega_k}\right), \]
	which can be used to bound the probability that $J_Y^\calZ$ has a negative minimum from below.
\end{rem}

A problem that arises with the approximation $J_Y^\calZ$ is that if we use the data $Y$ to define $\Phi = \Phi(Y)$ as the minimizer of $J_Y^\calZ$, then the test $\Psi_{\Phi,c^*}$ as constructed in Theorem \ref{thm1} does no longer have level $\alpha$. The reason for this is that due to the dependence of $\Phi$ on $Y$, the expectation of the estimator $\scalprod{Y,\Phi(Y)}{}$ is no longer equal to $\scalprod{T\udag,\Phi}{}$, and thus its bias is no longer given by \eqref{eq:bias}. We circumvent this problem by utilizing two independent data samples $Y_1 = T\udag + \sigma Z_1$ and $Y_2 = T\udag + \sigma Z_2$, the first one to construct the test, i.e., to choose $\Phi$ as a minimizer of $J_{Y_1}^\calZ$, while applying the test to the second one, i.e., evaluating $\Psi_{\Phi,c^*}(Y_2)$.

As $J_{Y_1}^\calZ$ does not necessarily possess a minimum, we consider the adapted test
\begin{equation}
	\label{eq:def_Psi_star}
	\Psi^*(Y_2;Y_1) := \begin{cases}
		\Psi_{\Phi,c}(Y_2) & \text{if}~J_{Y_1}^\calZ~\text{has a global minimizer}~\Phi\in\calZ, \\
		0 & \text{otherwise},
	\end{cases}
\end{equation}
where
\[ c := \sigma q_{1 - \alpha}^\Normal\norm{\Phi}{\Y} + \norm{T^*\Phi - \phi}{\V'} \]
for some $\alpha \in (0,1)$.

\begin{thm}
	\label{thm:pow_adapt_test}
	Suppose that Assumptions \ref{ass:smooth} and \ref{ass:Z} hold, that $\scalprod{\udag,\phi}{} > 0$ and $\alpha \in (0,1)$.
	Then the test $\Psi^*$ as defined in \eqref{eq:def_Psi_star} has level $\alpha$, and its power $\beta(\udag)$ is bounded from below by
	\begin{align*}
		\underline{\beta}(\udag) &:= \sup_{\tau \ge 0} Q\left(q_{\alpha}^\Normal + \frac{\tau}{\sigma C_\calZ}\right) \P{}{\norm{Z_1}{\calZ^*} < \frac{1}{2\sigma}\left(-\min_{\Phi \in \calZ} J_{T\udag}^\calZ(\Phi) - \tau\right)},
	\end{align*}
	where $Z_1$ is a standard Gaussian white noise process on $\Y$ and $C_\calZ > 0$ denotes the norm of the embedding $\calZ \hookrightarrow \Y$.
	In particular, $\beta(\udag) \to 1$ as $\sigma \to 0$.
\end{thm}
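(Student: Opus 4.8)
The plan is to condition on the first sample $Y_1$, exploiting the independence of $Y_1$ and $Y_2$, so that when integrating over $Y_2$ the probe functional $\Phi = \Phi(Y_1)$ may be treated as a fixed element of $\calZ \subseteq \Y$. First I would record the harmless technical point that the event $\{J_{Y_1}^\calZ~\text{has a global minimizer}\}$ is measurable and carries a measurable selection $Y_1 \mapsto \Phi(Y_1)$ (a standard measurable-selection argument; everything below is insensitive to which selection is chosen), and that \cref{ass:Z} makes $\W = \calZ$ an admissible choice in \cref{thm:Jsol,sol_approx}. For the level I would then argue conditionally on $Y_1$: if $J_{Y_1}^\calZ$ has no minimizer then $\Psi^* \equiv 0$; if it has a minimizer $\Phi$, the number $c = \sigma q_{1-\alpha}^\Normal\norm{\Phi}{\Y} + \norm{T^*\Phi - \phi}{\V'}$ is exactly the critical value $c^*$ of \eqref{eq:c_star} in the Gaussian model, so by \cref{thm1} together with the independence of $Y_2$ from $Y_1$ one obtains $\P{0}{\Psi_{\Phi,c}(Y_2) = 1 \mid Y_1} \le \alpha$. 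In either case the conditional rejection probability is at most $\alpha$, and taking expectations shows that $\Psi^*$ has level $\alpha$.

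For the power bound I would abbreviate $m := -\min_{\Phi \in \calZ} J_{T\udag}^\calZ(\Phi)$, which is a finite positive constant by \cref{thm:Jsol} and $\scalprod{\udag,\phi}{} > 0$, and $\delta_1 := \norm{Y_1 - T\udag}{\calZ^*} = \sigma\norm{Z_1}{\calZ^*}$. On the event $\{\delta_1 < m/2\}$, \cref{sol_approx} (with $\W = \calZ$ and $y = Y_1$) guarantees that $J_{Y_1}^\calZ$ has a minimizer $\Phi(Y_1)$; comparing it with the minimizer $\Phi^\dagger$ of $J_{T\udag}^\calZ$ and using $\abs{\scalprod{Z_1,\Phi}{}} \le \norm{Z_1}{\calZ^*}\norm{\Phi}{\calZ}$ exactly as in the proof of \cref{sol_approx} should produce the chain
\[ J_{T\udag}^\calZ(\Phi(Y_1)) \le J_{Y_1}^\calZ(\Phi(Y_1)) + \delta_1 \le J_{Y_1}^\calZ(\Phi^\dagger) + \delta_1 \le J_{T\udag}^\calZ(\Phi^\dagger) + 2\delta_1 = -m + 2\delta_1 < 0 . \]
Since the numerator $\norm{T^*\Phi(Y_1) - \phi}{\V'} - \scalprod{T\udag,\Phi(Y_1)}{\Y}$ is then negative and $\norm{\cdot}{\Y} \le C_\calZ\norm{\cdot}{\calZ}$, passing from the $\calZ$-norm to the $\Y$-norm in the denominator can only decrease the quotient, so
\[ J_{T\udag}^\Y(\Phi(Y_1)) \le \frac{1}{C_\calZ} J_{T\udag}^\calZ(\Phi(Y_1)) \le \frac{2\delta_1 - m}{C_\calZ} . \]

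Next, for a fixed $\tau \ge 0$ I would restrict to the event $\{\delta_1 < \frac{1}{2}(m - \tau)\} \subseteq \{\delta_1 < m/2\}$, on which the last display gives $J_{T\udag}^\Y(\Phi(Y_1)) < -\tau/C_\calZ$. By the independence of $Y_2$ from $Y_1$ and the power formula \eqref{eq:pow_reg_gauss} — which for a fixed nonzero probe functional $\Phi$ yields power $Q(q_\alpha^\Normal - J_{T\udag}^\Y(\Phi)/\sigma)$, cf.\ \eqref{eq:power_oracle} — the conditional power of $\Psi^*$ given $Y_1$ is then at least $Q(q_\alpha^\Normal + \tau/(\sigma C_\calZ))$ by monotonicity of $Q$, while on the complement it is nonnegative. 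Taking expectations and inserting $\delta_1 = \sigma\norm{Z_1}{\calZ^*}$ yields
\[ \P{1}{\Psi^*(Y_2;Y_1) = 1} \ge Q\!\left(q_\alpha^\Normal + \frac{\tau}{\sigma C_\calZ}\right)\P{}{\norm{Z_1}{\calZ^*} < \frac{1}{2\sigma}\left(-\min_{\Phi \in \calZ}J_{T\udag}^\calZ(\Phi) - \tau\right)} , \]
and taking the supremum over $\tau \ge 0$ gives $\beta(\udag) \ge \underline{\beta}(\udag)$. For the limit, I would take $\tau = m/2$: then $q_\alpha^\Normal + \tau/(\sigma C_\calZ) \to +\infty$ so the first factor tends to $1$, and the event $\{\norm{Z_1}{\calZ^*} < m/(4\sigma)\}$ has probability tending to $1$ because $\norm{Z_1}{\calZ^*} < \infty$ almost surely by \cref{ass:Z}; hence $\underline{\beta}(\udag) \to 1$, and therefore $\beta(\udag) \to 1$ as $\sigma \to 0$.

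The step I expect to be the main obstacle is the middle one: converting the data-dependent quantity $J_{T\udag}^\Y(\Phi(Y_1))$ into the deterministic $-m$ up to a controlled stochastic error. This needs both the stability estimate for minimizers of $J_{Y_1}^\calZ$ that has to be distilled from the proof of \cref{sol_approx} and careful bookkeeping of the two norms $\norm{\cdot}{\calZ}$ and $\norm{\cdot}{\Y}$ in the denominators — it is precisely the negativity of the numerator on the relevant event that makes the embedding constant $C_\calZ$ enter on the favorable side. The measurable selection of $\Phi(Y_1)$ is a genuine but routine technical point.
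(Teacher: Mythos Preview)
Your proposal is correct and follows essentially the same approach as the paper: both condition on $Y_1$, use the independence of $Y_2$ to invoke \cref{thm1} for the level, and for the power derive the stability chain $J_{T\udag}^\calZ(\Phi(Y_1)) \le J_{T\udag}^\calZ(\Phi^\dagger) + 2\norm{Y_1 - T\udag}{\calZ^*}$ on the relevant event, then convert to $J_{T\udag}^\Y$ via $\norm{\Phi}{\calZ}/\norm{\Phi}{\Y} \ge 1/C_\calZ$ and the negativity of the numerator. Your version is in fact a little tidier in places: you correctly present the power identity as an inequality (the paper writes it as an equality, which it is not), you make the measurable-selection issue explicit, and you supply the short argument for $\beta(\udag)\to 1$ as $\sigma\to 0$, which the paper asserts without proof.
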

\begin{proof}
By the independence of $Y_1$ and $Y_2$ and the choice of $c$, the test $\Psi^*$ has level
\begin{equation*}
	\P{0}{\Psi^* = 1} = \P{0}{\Psi_{\Phi,c}(Y_2) = 1 \big\vert J_{Y_1}^\calZ~\text{has a minimum}}\P{0}{J_{Y_1}^\calZ~\text{has a minimum}}
	\le \alpha \cdot 1.
\end{equation*}
By \cref{thm:Jsol}, there exists a minimizer $\Phi^\dagger \in \calZ$ of $J_{T\udag}^\calZ$ and $J_{T\udag}^\calZ(\Phi^\dagger)$ is negative.
We express the power of $\Psi^*$ as
\begin{align*}
	\beta(\udag) = \P{1}{\Psi^* = 1} &= \P{1}{\Psi_{\Phi,c}(Y_2) = 1 \Bigg\vert \norm{Y_1 - T\udag}{\calZ^*} < -\frac{J_{T\udag}^\calZ(\Phi^\dagger) + \tau}{2}} \\
	&\quad \cdot\P{1}{\norm{Y_1 - T\udag}{\calZ^*} < -\frac{J_{T\udag}^\calZ(\Phi^\dagger) + \tau}{2}}
\end{align*}
for any $\tau \ge 0$.
By \eqref{eq:model}, we have $\norm{Y_1 - T\udag}{\calZ^*} = \sigma \norm{Z_1}{\calZ^*}$, where $Z_1$ is a standard Gaussian white noise process on $\Y$.
If
\[ \norm{Y_1 - T\udag}{\calZ^*} < -\frac{J_{T\udag}^\calZ(\Phi^\dagger) + \tau}{2}, \]
then $J_{Y_1}^\calZ$ has a global minimizer $\Phi \in \calZ$ by \cref{sol_approx} and
\begin{align*}
	J_{T\udag}^\calZ(\Phi) &= J_{Y_1}^\W(\Phi) + \scalprod{Y_1 - T\udag, \frac{\Phi}{\norm{\Phi}{\calZ}}}{} \le J_{Y_1}^\calZ(\Phi^\dagger) + \scalprod{Y_1 - T\udag, \frac{\Phi}{\norm{\Phi}{\calZ}}}{} \\
	&= J_{T\udag}^\calZ(\Phi^\dagger) + \scalprod{Y_1 - T\udag, \frac{\Phi}{\norm{\Phi}{\calZ}} - \frac{\Phi^\dagger}{\norm{\Phi^\dagger}{\calZ}}}{\calZ^* \times \calZ} \\
	&\le J_{T\udag}^\calZ(\Phi^\dagger) + 2\norm{Y_1 - T\udag}{\calZ^*} \le -\tau.
\end{align*}
We have $\Psi_{\Phi,c}(Y_2) = 1$ if and only if
\begin{equation*}
	 \frac{\scalprod{Z_2,\Phi}{}}{\norm{\Phi}{\Y}} \ge q_{1 - \alpha}^\Normal + \frac{J_{T\udag}^\Y(\Phi)}{\sigma} = q_{1 - \alpha}^\Normal + \frac{J_{T\udag}^\calZ(\Phi)\norm{\Phi}{\calZ}}{\sigma\norm{\Phi}{\Y}}.
\end{equation*}
Thus, it follows that
\begin{align*}
	&\P{1}{\Psi_{\Phi,c}(Y_2) = 1 \Bigg\vert \norm{Y_1 - T\udag}{\calZ^*} < -\frac{J_{T\udag}^\calZ(\Phi^\dagger) + \tau}{2}} \\
	&= \P{}{\frac{\scalprod{Z_2,\Phi}{}}{\norm{\Phi}{\Y}} \ge q_{1 - \alpha}^\Normal + \frac{J_{T\udag}^\calZ(\Phi)\norm{\Phi}{\calZ}}{\sigma\norm{\Phi}{\Y}} \Bigg\vert \norm{Y_1 - T\udag}{\calZ^*} < -\frac{J_{T\udag}^\calZ(\Phi^\dagger) + \tau}{2}} \\
	&\ge \P{}{\frac{\scalprod{Z_2,\Phi}{}}{\norm{\Phi}{\Y}} \ge q_{1 - \alpha}^\Normal - \frac{\tau}{\sigma C_\calZ} \Bigg\vert \norm{Y_1 - T\udag}{\calZ^*} < -\frac{J_{T\udag}^\calZ(\Phi^\dagger) + \tau}{2}} \\
	&= 1 - Q\left(q_{1 - \alpha}^\Normal - \frac{\tau}{\sigma C_\calZ}\right)
	= Q\left(-q_{1 - \alpha}^\Normal + \frac{\tau}{\sigma C_\calZ}\right).
\end{align*}
This yields
\begin{align*}
	\beta(\udag) &\ge Q\left(q_{\alpha}^\Normal + \frac{\tau}{\sigma C_\calZ}\right) \P{}{\norm{Z_1}{\calZ'} < \frac{-J_{T\udag}^\calZ(\Phi^\dagger) - \tau}{2\sigma}}
\end{align*}
for every $\tau \ge 0$.
\end{proof}

\section{Computability}
\label{sec:computability}

Next, we turn toward computability of the minimizer $\Phi \in \Y$ of $J_Y^\W$. Therefore we introduce the class of functionals $\hat{J}_{y}^\W$: $\W \times \bbR \to \bbR \cup \{\infty\}$,
\[
\hat{J}_{y}^\W \left(e,s\right) := \norm{T^*e - s \varphi}{\V'} - \left\langle y, e\right\rangle_{\W^* \times \W},
\]
where $\W$ is a Hilbert space that is dense and continuously embedded in $\Y$, and $y \in \W^*$.
If $\Phi \in \W$, $\Phi \neq 0$, then we have the relation
\[
J_{y}^\W \left(\Phi\right) = \hat{J}_{y}^\W \left(\frac{\Phi}{\norm{\Phi}{\W}}, \frac{1}{\norm{\Phi}{\W}}\right).
\]
Moreover, $\hat{J}_y^\W$ is positively homogeneous.
We will now show how the global minimizers of $J_y^\W$ are related to the global solutions of the convex problem
\begin{equation}
	\label{eq:approx_conv_prob}
	\min \hat{J}_y^\W(e,s) \qquad \text{subject to} \qquad \norm{e}{\W} \le 1, \quad s \ge 0.
\end{equation}
This problem is equivalent to the unconstrained problem of minimizing $\hat{J}_y^\W + \delta_{B_1^\W \times \bbR_+}$, where $\delta_S$ denotes the indicator function of a set $S$, i.e., $\delta_S(x) = 0$ if $x \in S$ and $\delta_S(x) = \infty$ otherweise, $B_1^\W$ denotes the closed unit ball in $\W$, and $\bbR_+ := [0,\infty)$.

\begin{lem}
\label{lem:surr_cont}
	Suppose \cref{ass:W}, let $U^\W := B_1^\W \times \bbR_+$ and $y \in W^*$. 
	Then the functional $\hat{J}_y^\W + \delta_{U^\W}$: $\W \times \bbR \to \bbR \cup \{\infty\}$ is convex, lower semicontinuous, and coercive.
\end{lem}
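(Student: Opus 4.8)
The plan is to verify the three properties of $\hat J_y^\W + \delta_{U^\W}$ separately, exploiting that $U^\W = B_1^\W \times \bbR_+$ is a closed, convex, but unbounded set.

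\textbf{Convexity.} The map $(e,s) \mapsto T^*e - s\varphi$ is affine (linear, in fact), so $(e,s) \mapsto \norm{T^*e - s\varphi}{\V'}$ is convex as the composition of a norm with a linear map. The term $-\scalprod{y,e}{\W^* \times \W}$ is linear, hence convex. Thus $\hat J_y^\W$ is convex on all of $\W \times \bbR$. Since $B_1^\W$ is convex (closed unit ball in a normed space) and $\bbR_+$ is convex, $U^\W$ is convex, so $\delta_{U^\W}$ is convex; the sum of convex functions is convex.

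\textbf{Lower semicontinuity.} Here I would be slightly careful about which topology is meant. The functional $\hat J_y^\W$ is continuous on $\W \times \bbR$ in the strong topology: $e \mapsto T^*e$ is bounded $\W \to \V'$ by \cref{ass:smooth}\ref{ass_sm_3} composed with the continuous embedding $\W \hookrightarrow \Y$, the norm $\norm{\cdot}{\V'}$ is continuous, and $e \mapsto \scalprod{y,e}{\W^* \times \W}$ is continuous since $y \in \W^*$; so $\hat J_y^\W$ is strongly continuous, a fortiori strongly lower semicontinuous. Since $\delta_{U^\W}$ is lsc precisely because $U^\W$ is closed ($B_1^\W$ is closed, $\bbR_+$ is closed), the sum is lsc. (If weak lower semicontinuity is wanted, note $\hat J_y^\W$ is convex and strongly continuous, hence weakly lsc, and $U^\W$ is closed and convex hence weakly closed, so $\delta_{U^\W}$ is weakly lsc — this is the version one actually needs for the direct method.)

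\textbf{Coercivity.} This is the only nontrivial point, and the main obstacle, since $U^\W$ is unbounded in the $s$-direction: on $B_1^\W \times \bbR_+$ a sequence with $\norm{(e_n,s_n)}{\W \times \bbR} \to \infty$ must have $s_n \to \infty$ while $\norm{e_n}{\W} \le 1$. I need to show $\hat J_y^\W(e_n,s_n) \to \infty$. Estimate the norm term from below by reverse triangle inequality: $\norm{T^*e_n - s_n\varphi}{\V'} \ge s_n \norm{\varphi}{\V'} - \norm{T^*e_n}{\V'} \ge s_n\norm{\varphi}{\V'} - \norm{T^*}{\W\to\V'}$, using $\norm{e_n}{\W}\le 1$. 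The linear term is bounded: $\abs{\scalprod{y,e_n}{\W^*\times\W}} \le \norm{y}{\W^*}$. Hence $\hat J_y^\W(e_n,s_n) \ge s_n\norm{\varphi}{\V'} - \norm{T^*}{\W\to\V'} - \norm{y}{\W^*} \to \infty$, provided $\varphi \ne 0$ in $\V'$. The edge case $\varphi = 0$ corresponds to a degenerate testing problem (the feature functional is trivial under the given seminorm); this should be excluded either by the standing hypothesis $\scalprod{\udag,\varphi}{} > 0$ used throughout \cref{sec:opt_reg_test,sec:adaptive}, or one simply notes that the relevant statements only invoke coercivity in that regime. So the argument reads: any sequence in $\W\times\bbR$ along which $\hat J_y^\W + \delta_{U^\W}$ stays bounded must stay in $U^\W$, hence have bounded $s$-component by the display above, hence be bounded in $\W\times\bbR$; equivalently, $\hat J_y^\W + \delta_{U^\W}$ has bounded sublevel sets, which is coercivity.
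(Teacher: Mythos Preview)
Your proof is correct and follows essentially the same route as the paper: convexity via the triangle inequality (equivalently, norm of a linear map), lower semicontinuity from continuity of $\hat J_y^\W$ plus closedness of $U^\W$, and coercivity via the reverse triangle inequality $\norm{T^*e_n - s_n\varphi}{\V'} \ge s_n\norm{\varphi}{\V'} - \norm{T^*e_n}{\V'}$ on sequences with $s_n\to\infty$. You even flag the implicit hypothesis $\norm{\varphi}{\V'}>0$ needed for this step, which the paper's own argument uses without comment; your justification (it is implied by the standing assumption $\langle u^\dagger,\varphi\rangle>0$ in force wherever the lemma is applied) is the right fix.
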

\begin{proof}
For all $e_1, e_2 \in \W$, $s_1, s_2 \in \bbR$, and $\lambda \in [0,1]$, we have
\begin{align*}
	&\hat{J}_y^\W(\lambda e_1 + (1 - \lambda) e_2, \lambda s_1 + (1 - \lambda) s_2) \\
	&= \norm{\lambda\left(T^*e_1 - s_1\phi\right) + (1 - \lambda)\left(T^*e_2 - s_2\phi\right)}{\V'}
	- \scalprod{y, \lambda e_1 + (1 - \lambda)e_2}{} \\
	&\le \lambda\norm{T^*e_1 - s_1\phi}{\V'} + (1 - \lambda)\norm{T^*e_2 - s_2\phi}{\V'}
	- \lambda\scalprod{y, e_1}{} - (1 - \lambda)\scalprod{y, e_2}{} \\
	&= \lambda\hat{J}_y^\W(e_1,s_1) + (1 - \lambda)\hat{J}_y^\W(e_2,s_2)
\end{align*}
by the triangle inequality. Now, $\hat{J}_y^\W + \delta_{U^\W}$ is convex as the sum of convex functions.

The functional $\hat{J}_y^\W$ is continuous since the mapping $\W \times \bbR \to \V'$, $(e,s) \mapsto T^*e - s\varphi$ is continuous by Assumption \ref{ass:smooth} (3) and (4) and the continuity of the embedding $\W \hookrightarrow \Y$, and since the linear functional $e \mapsto \scalprod{y,e}{\W^* \times \W}$ is bounded. The indicator function $\delta_{U^\W}$ is lower semicontinuous since $U^\W$ is convex and closed in $\W$. Consequently, $\hat{J}_y^\W + \delta_{U^\W}$ is lower semicontinuous as the sum of two lower semicontinuous functions.

Let $(e_n,s_n)_{n\in\N}$ be a sequence in $\W \times \bbR$ with $\norm{e_n}{\W} \to \infty$ or $s_n \to \infty$.
If $(e_n)_{n\in\N}$ is bounded, then $s_n$ is not. In this case, $(T^*e_n)_{n\in\N}$ is bounded in $\V'$ by Assumption \ref{ass:smooth} (3), which yields
\begin{equation*}
	\hat{J}_y^\W(e_n,s_n) = \norm{T^*e_n - s_n\varphi}{\V'} - \scalprod{y, e_n}{\W^* \times \W}
	\ge s_n\norm{\varphi}{\V'} - \norm{T^*e_n}{\V'} - \norm{y}{\W^*}\norm{w}{\W} \to \infty
\end{equation*}
as $n \to \infty$.
If, on the other hand, $(e_n)_{n\in\N}$ is unbounded, then $\delta_{U^\W}(e_n,s_n) \to \infty$. Therefore, $\hat{J}_y^\W + \delta_{U^\W}$ is coercive.
\end{proof}

\begin{thm}
	\label{thm:sol_approx_conv}
	Suppose that Assumptions \ref{ass:smooth} and \ref{ass:W} hold and let $y \in \W^*$.
	Then a solution $(e,s) \in \W \times \bbR$ of \eqref{eq:approx_conv_prob} exists.
\end{thm}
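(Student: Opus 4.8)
\section*{Proof proposal}

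The plan is to apply the direct method of the calculus of variations, building directly on \cref{lem:surr_cont}. Since \eqref{eq:approx_conv_prob} is equivalent to minimizing $F := \hat{J}_y^\W + \delta_{U^\W}$ over all of $\W \times \bbR$, it suffices to produce a minimizer of $F$. Note first that $F$ is proper: evaluating at $(0,0) \in U^\W$ gives $F(0,0) = \norm{0}{\V'} - \scalprod{y,0}{} = 0$, so $\inf F \le 0$; that $\inf F > -\infty$ will follow from the argument below.

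I would then take a minimizing sequence $(e_n,s_n)_{n\in\N} \subset \W \times \bbR$ for $F$. By the coercivity of $F$ from \cref{lem:surr_cont}, this sequence must be bounded: otherwise $F(e_n,s_n) \to \infty$ along a subsequence, contradicting that the sequence is minimizing with $\inf F \le 0$. Hence $(e_n)_{n\in\N}$ is bounded in the Hilbert space $\W$ and $(s_n)_{n\in\N}$ is bounded in $\bbR$; after passing to a subsequence (not relabeled), $e_n \rightharpoonup e$ weakly in $\W$ and $s_n \to s$ in $\bbR$ for some $(e,s) \in \W \times \bbR$. Since $F$ is convex and lower semicontinuous by \cref{lem:surr_cont}, it is weakly sequentially lower semicontinuous, so
\[
F(e,s) \le \liminf_{n\to\infty} F(e_n,s_n) = \inf_{\W\times\bbR} F .
\]
In particular $F(e,s) < \infty$, which forces $(e,s) \in U^\W$, i.e.\ $\norm{e}{\W} \le 1$ and $s \ge 0$; thus $(e,s)$ is a solution of \eqref{eq:approx_conv_prob}.

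The only steps that need genuine care --- both standard --- are the extraction of a weakly convergent subsequence, which uses that $\W$ is a Hilbert (hence reflexive) space and that the $s$-component lives in the finite-dimensional space $\bbR$, and the weak lower semicontinuity of $F$, for which convexity together with strong lower semicontinuity (equivalently, that $U^\W = B_1^\W \times \bbR_+$ is convex and closed, so $\delta_{U^\W}$ is weakly l.s.c.) is exactly what \cref{lem:surr_cont} supplies. No ill-posedness considerations enter, and the conclusion holds for every $y \in \W^*$.
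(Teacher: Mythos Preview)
Your argument is correct and is precisely the direct method the paper invokes: the paper's own proof just cites \cref{lem:surr_cont} (convexity, lower semicontinuity, coercivity) and refers to a standard existence result, while you spell out the minimizing-sequence/weak-compactness/weak-l.s.c.\ steps explicitly. There is no substantive difference in approach.
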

\begin{proof}
The existence of a minimizer of $\hat{J}_{T\udag}^\W + \delta_{U^\W}$ follows, using the direct method, from its convexity, lower semicontinuity, and coercivity established in Lemma \ref{lem:surr_cont}, see, e.g., \cite[Proposition II.1.2]{EkelandTemam1976}.
\end{proof}

\begin{thm}
	\label{thm:hatJ_sol}
	Suppose that Assumptions \ref{ass:smooth} and \ref{ass:W} hold, that $\scalprod{\udag,\phi}{} > 0$, and let $y \in \W^*$.
	If
	\begin{equation}
		\label{eq:conv_surr_cond}
		\norm{y - T\udag}{\W^*} < -\frac12\min_{(e,s) \in U^\W} \hat{J}_{T\udag}^\W(e,s),
	\end{equation}
	then every solution $(e,s) \in \W \times \bbR$ of \eqref{eq:approx_conv_prob} satifies
	$\norm{e}{\W} = 1$, $s > 0$, and $\hat{J}_y^\W(e,s) < 0$.
	In particular, this is the case for $y = T\udag$.
\end{thm}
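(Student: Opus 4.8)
The plan is to show first that the optimal value $m := \min_{(e,s)\in U^\W}\hat J_y^\W(e,s)$, which is attained by \cref{thm:sol_approx_conv}, satisfies $m<\frac12 m_0<0$, where $m_0 := \min_{(e,s)\in U^\W}\hat J_{T\udag}^\W(e,s)$; the three asserted properties then follow. That $m_0<0$ I would get from \cref{thm:Jsol} together with the homogeneity relation $J_{T\udag}^\W(\Phi)=\hat J_{T\udag}^\W(\Phi/\norm{\Phi}{\W},\,1/\norm{\Phi}{\W})$: a minimizer $\Phi^\dagger$ of $J_{T\udag}^\W$, which exists with negative value because $\scalprod{\udag,\varphi}{}>0$, produces a point $(\Phi^\dagger/\norm{\Phi^\dagger}{\W},\,1/\norm{\Phi^\dagger}{\W})$ feasible for \eqref{eq:approx_conv_prob} whose objective value is $J_{T\udag}^\W(\Phi^\dagger)<0$.

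For the bound on $m$ I would test $\hat J_y^\W$ at a minimizer $(\bar e,\bar s)\in U^\W$ of $\hat J_{T\udag}^\W$ --- which exists by \cref{thm:sol_approx_conv} applied with $y=T\udag$, so that $\hat J_{T\udag}^\W(\bar e,\bar s)=m_0$ and $\norm{\bar e}{\W}\le 1$ --- and split off the data perturbation:
\[ m \le \hat J_y^\W(\bar e,\bar s) = \hat J_{T\udag}^\W(\bar e,\bar s) - \scalprod{y-T\udag,\bar e}{\W^*\times\W} \le m_0 + \norm{y-T\udag}{\W^*}\norm{\bar e}{\W} \le m_0 + \norm{y-T\udag}{\W^*} < \frac12 m_0 < 0, \]
the last strict inequality being exactly the hypothesis \eqref{eq:conv_surr_cond}. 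Consequently every solution $(e,s)$ of \eqref{eq:approx_conv_prob} has $\hat J_y^\W(e,s)=m<0$, which is already the third assertion. Since $\hat J_y^\W(0,s)=\norm{s\varphi}{\V'}\ge 0>m$, we must have $e\neq 0$; and if $\norm{e}{\W}<1$, then $(e/\norm{e}{\W},\,s/\norm{e}{\W})\in U^\W$ and positive homogeneity of $\hat J_y^\W$ would give the value $m/\norm{e}{\W}<m$ there, contradicting optimality. Hence $\norm{e}{\W}=1$.

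The remaining point, $s>0$, is the one I expect to require genuine care, and I would argue it by contradiction. If $s=0$, then $\hat J_y^\W(e,0)=\hat J_{T\udag}^\W(e,0)-\scalprod{y-T\udag,e}{\W^*\times\W}$, and the bias-type estimate underlying \cref{thm1} --- namely $\scalprod{\udag,T^*e}{\X\times\X^*}\le\norm{\udag}{\V}\norm{T^*e}{\V'}\le\norm{T^*e}{\V'}$, which rests on \cref{ass:smooth}\,\ref{ass_sm_1}--\ref{ass_sm_3} --- shows $\hat J_{T\udag}^\W(e,0)=\norm{T^*e}{\V'}-\scalprod{\udag,T^*e}{}\ge 0$. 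Together with $\norm{e}{\W}\le 1$ this yields
\[ m = \hat J_y^\W(e,0) \ge -\norm{y-T\udag}{\W^*}\norm{e}{\W} \ge -\norm{y-T\udag}{\W^*} > \frac12 m_0, \]
again by \eqref{eq:conv_surr_cond}, which contradicts $m<\frac12 m_0$ from the previous step. Hence $s>0$. Finally, for $y=T\udag$ the hypothesis \eqref{eq:conv_surr_cond} holds trivially, since $\norm{y-T\udag}{\W^*}=0<-\frac12 m_0$, so the last sentence follows. The crux of the argument is thus this two-sided squeeze of $m$ about $\frac12 m_0$: the quantitative smallness of $\norm{y-T\udag}{\W^*}$ forces $m<\frac12 m_0$, whereas any solution with $s=0$ would force $m\ge\frac12 m_0$; everything else is scaling together with the familiar bias bound.
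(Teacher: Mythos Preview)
Your proof is correct and follows essentially the same route as the paper's. The only cosmetic differences are that you obtain $m_0<0$ by invoking \cref{thm:Jsol} together with the homogeneity relation rather than repeating the density argument directly, and that you deduce $s>0$ via the contradiction squeeze of $m$ about $\tfrac12 m_0$, whereas the paper combines the same bias estimate with the upper bound on $\hat J_y^\W(e,s)$ to obtain the explicit quantitative lower bound $s\ge\bigl(-m_0-2\norm{y-T\udag}{\W^*}\bigr)/\scalprod{\udag,\varphi}{}$; the substance is identical.
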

\begin{proof}
By \cref{thm:sol_approx_conv}, a minimizer $(e^\dagger,s^\dagger) \in \W \times \bbR$ of $\hat{J}_{T\udag}^\W + \delta_{U^\W}$ exists.
First, we show that $\hat{J}_{T\udag}^\W(e^\dagger,s^\dagger)$ is negative.
By assumption, there exists a sequence $\left(v_n \right)_{n \in \N} \subset \Y$ such that $T^*v_n \to \varphi$ as $n \to \infty$. By the density of $\W$ in $\Y$, we can approximate $(v_n)_{n\in\N}$ by a sequence $(w_n)_{n\in\N} \subset \W$ such that $T^*w_n \to \phi$ as well.
This yields
\[
\norm{w_n}{\W} J_{T\udag}^\W(w_n) = \left\Vert T^*w_n - \varphi \right\Vert_{\V'} -  \left\langle \udag,T^*w_n\right\rangle_{\X \times \X^*} \to - \left\langle \udag, \varphi\right\rangle_{\X \times \X^*} < 0,
\]
using \cref{ass:smooth} \ref{ass_sm_3}. In particular, there exists $n \in \N$ such that the left hand side is negative.
It follows that
\[ \hat{J}_{T\udag}^\W(e^\dagger,s^\dagger) \le \hat{J}_{T\udag}^\W(e_n,s_n) = \norm{T^*e_n - s_n\varphi}{\V'} - \scalprod{\udag,T^*e_n}{\X} < 0 \]
for $e_n := w_n/\norm{w_n}{\W}$ and $s_n := 1/\norm{w_n}{\W}$.
Now, we have
\begin{equation*}
	\tau := -\hat{J}_{T\udag}^\W\left(e^\dagger,s^\dagger\right) - \norm{y - T\udag}{\W^*} > 0
\end{equation*}
by assumption.
By the optimality of $(e,s)$ for $\hat{J}_y^\W$, this implies
\begin{align*}
	\hat{J}_y^\W(e,s) &\le \hat{J}_y^\W\left(e^\dagger,s^\dagger\right)
	= \hat{J}_{T\udag}^\W\left(e^\dagger,s^\dagger\right) - \scalprod{y - T\udag,e^\dagger}{\W^* \times \W} \\
	&\le \hat{J}_{T\udag}^\W\left(e^\dagger,s^\dagger\right) + \norm{y - T\udag}{\W^*} = -\tau < 0.
\end{align*}
On the other hand, we have
\begin{align*}
	\hat{J}_y^\W(e,s) &\ge \norm{\udag}{\V}\norm{T^*e - s\phi}{\V'} - \scalprod{y,e}{}
	\ge \scalprod{\udag,T^*e - s\phi}{} - \scalprod{y,e}{} \\
	&= \scalprod{T\udag - y,e}{} - s\scalprod{\udag,\phi}{}
	\ge -\norm{y - T\udag}{\W^*} - s\scalprod{\udag,\phi}{}
\end{align*}
by \cref{ass:smooth} (1) and (2).
By \eqref{eq:conv_surr_cond}, this implies
\[ s \ge \frac{\tau - \norm{y - T\udag}{\W^*}}{\scalprod{\udag,\phi}{}} > 0. \]
We have $e \neq 0$ because assuming that $e = 0$ and using \eqref{eq:conv_surr_cond} leads to
\[ 0 > \hat{J}_y^\W(0,s) = s\norm{\phi}{\V'} \ge 0, \]
a contradiction.
We see that $\norm{e}{\calZ}^{-1}(e,s)$ is feasable for \eqref{eq:approx_conv_prob} and satisfies
\[ \hat{J}_y^\W(e,s) \le \hat{J}_y^\W\left(\norm{e}{\calZ}^{-1}e,\norm{e}{\calZ}^{-1}s\right) = \norm{e}{\calZ}^{-1}\hat{J}_y^\W(e,s). \]
Now it follows from the negativity of $\hat{J}_y^\W(e,s)$ that $\norm{e}{\calZ} = 1$.
\end{proof}

Now, we relate the constrained minimizers of $\hat{J}_y^\W$ to the global minimizers of $J_y^\W$.
\begin{thm}
	\label{thm:rel_approx_sol}
	Suppose that Assumptions \ref{ass:smooth} and \ref{ass:W} hold, that $\scalprod{\udag,\phi}{\X} > 0$ and $y \in \W^*$. If $(e,s) \in \W \times \bbR$ is a solution of \eqref{eq:approx_conv_prob}, $e \neq 0$, and $s > 0$, then $s^{-1}e$ is a global minimizer of $J_y^\W$.
	Conversely, if $\Phi \in \W$ is a minimizer of $J_y^\W$, then
	\[ \hat{J}_y^\W\left(\frac{\Phi}{\norm{\Phi}{\W}}, \frac{1}{\norm{\Phi}{\W}}\right) = \min \left\{\hat{J}_y^\W(e,s): e \in \W, \norm{e}{\W} = 1, s > 0\right\}. \]
\end{thm}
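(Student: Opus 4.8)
The plan is to establish the claimed correspondence between the constrained convex problem \eqref{eq:approx_conv_prob} and the quotient minimization of $J_y^\W$ by exploiting the two structural facts recorded just above the statement: the positive homogeneity of $\hat{J}_y^\W$, and the identity $J_y^\W(\Phi) = \hat{J}_y^\W(\Phi/\norm{\Phi}{\W}, 1/\norm{\Phi}{\W})$ valid for $\Phi \in \W \setminus \{0\}$. I would treat the two implications separately, in both cases simply tracking the scaling map $\Phi \mapsto (\Phi/\norm{\Phi}{\W}, 1/\norm{\Phi}{\W})$ and its inverse $(e,s) \mapsto s^{-1}e$. One elementary observation underlies everything: since $(0,0) \in U^\W$ and $\hat{J}_y^\W(0,0) = 0$, the optimal value $m$ of \eqref{eq:approx_conv_prob} satisfies $m \le 0$.

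For the forward implication I would let $(e,s)$ be a solution of \eqref{eq:approx_conv_prob} with $e \neq 0$ and $s > 0$, put $\Phi := s^{-1}e \neq 0$, and note $\norm{\Phi}{\W} = s^{-1}\norm{e}{\W}$, hence $\Phi/\norm{\Phi}{\W} = e/\norm{e}{\W}$ and $1/\norm{\Phi}{\W} = s/\norm{e}{\W}$. The identity and positive homogeneity then give $J_y^\W(\Phi) = \hat{J}_y^\W(e/\norm{e}{\W}, s/\norm{e}{\W}) = \norm{e}{\W}^{-1}\hat{J}_y^\W(e,s) = \norm{e}{\W}^{-1} m$, and since $\norm{e}{\W} \le 1$ and $m \le 0$ this yields $J_y^\W(\Phi) \le m$. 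On the other hand, every $\Psi \in \W \setminus \{0\}$ produces a point $(\Psi/\norm{\Psi}{\W}, 1/\norm{\Psi}{\W})$ that is feasible for \eqref{eq:approx_conv_prob} with positive last coordinate, so $J_y^\W(\Psi) = \hat{J}_y^\W(\Psi/\norm{\Psi}{\W}, 1/\norm{\Psi}{\W}) \ge m$. Chaining, $m \le \inf_{\Psi \in \W \setminus \{0\}} J_y^\W(\Psi) \le J_y^\W(\Phi) \le m$, so $\Phi$ attains that infimum; as $J_y^\W(0) = \infty$ by convention, $\Phi$ is a global minimizer of $J_y^\W$ on $\W$.

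For the converse I would start from a minimizer $\Phi$ of $J_y^\W$; it satisfies $\Phi \neq 0$ because $J_y^\W(0) = \infty$ whereas $J_y^\W$ is finite on $\W \setminus \{0\}$ (the numerator is finite by \cref{ass:smooth} \ref{ass_sm_3}, \ref{ass_sm_4} and $y \in \W^*$). Setting $(e_0,s_0) := (\Phi/\norm{\Phi}{\W}, 1/\norm{\Phi}{\W})$, one has $\norm{e_0}{\W} = 1$, $s_0 > 0$, and $\hat{J}_y^\W(e_0,s_0) = J_y^\W(\Phi)$ by the identity. For any competitor $(e,s)$ with $\norm{e}{\W} = 1$ and $s > 0$, the element $s^{-1}e \in \W \setminus \{0\}$ satisfies $J_y^\W(s^{-1}e) = \hat{J}_y^\W(e,s)$, so the minimality of $\Phi$ gives $\hat{J}_y^\W(e_0,s_0) = J_y^\W(\Phi) \le J_y^\W(s^{-1}e) = \hat{J}_y^\W(e,s)$. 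Thus $(e_0,s_0)$ realizes $\min\{\hat{J}_y^\W(e,s): e \in \W, \norm{e}{\W} = 1, s > 0\}$, which therefore equals $\hat{J}_y^\W(\Phi/\norm{\Phi}{\W}, 1/\norm{\Phi}{\W})$, as claimed.

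The only subtle point — and the reason I would not simply assert $\norm{e}{\W} = 1$ in the forward part — is that, absent the data condition \eqref{eq:conv_surr_cond}, a solution of \eqref{eq:approx_conv_prob} can have $\norm{e}{\W} < 1$ (precisely when $m = 0$); the bound $m \le 0$ together with positive homogeneity is what disposes of this case uniformly. Otherwise the argument is routine bookkeeping, and in fact the hypothesis $\scalprod{\udag,\phi}{} > 0$ plays no role in it beyond keeping the statement within the standing assumptions under which minimizers of $J_y^\W$ are known to exist (\cref{thm:Jsol,thm:hatJ_sol}).
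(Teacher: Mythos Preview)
Your proof is correct and follows essentially the same approach as the paper, exploiting positive homogeneity of $\hat{J}_y^\W$ and the correspondence $\Phi \leftrightarrow (\Phi/\norm{\Phi}{\W},1/\norm{\Phi}{\W})$. The only minor difference is in the forward direction: you invoke $m \le 0$ (from feasibility of $(0,0)$) to obtain $\norm{e}{\W}^{-1}m \le m$, whereas the paper avoids this by comparing $(e,s)$ directly against the rescaled competitor $\bigl(\norm{e}{\W}\,\Phi/\norm{\Phi}{\W},\,\norm{e}{\W}/\norm{\Phi}{\W}\bigr)$, so that the factor $\norm{e}{\W}^{-1}$ cancels by homogeneity without any sign consideration.
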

\begin{proof}
In the first case, we have
\begin{align*}
	J_y^\W\left(s^{-1}e\right) &= \frac{1}{\norm{e}{\W}} \hat{J}_y^\W(e,s)
	\le \frac{1}{\norm{e}{\W}} \hat{J}_y^\W\left(\frac{\Phi\norm{e}{\W}}{\norm{\Phi}{\W}}, \frac{\norm{e}{\W}}{\norm{\Phi}{\W}}\right) \\
	&= \hat{J}_y^\W\left(\frac{\Phi}{\norm{\Phi}{\W}}, \frac{1}{\norm{\Phi}{\W}}\right)
	= J_y^\W(\Phi)
\end{align*}
for all $\Phi \in \W \setminus \{0\}$ by the optimality of $(e,s)$.
In the second case, we have
\begin{align*}
	\hat{J}_y^\W\left(\frac{\Phi}{\norm{\Phi}{\W}}, \frac{1}{\norm{\Phi}{\W}}\right)
	&= J_y^\W(\Phi)
	\le J_y^\W\left(s^{-1}e\right)
	= \hat{J}_y^\W(e,s)
\end{align*}
for all $e \in \W$ with $\norm{e}{\W} = 1$ and $s > 0$ by the optimality of $\Phi$.
\end{proof}

\begin{cor}
	Let the assumptions of \cref{thm:rel_approx_sol} hold.
	If $\Phi \in \W$ is a minimizer of $J_{T\udag}^\W$, then 
	\[ \left(\frac{\Phi}{\norm{\Phi}{\W}},\frac{1}{\norm{\Phi}{\W}}\right)\]
	minimizes $\hat{J}_{T\udag}^\W + \delta_{U^\W}$.
\end{cor}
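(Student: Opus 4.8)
The plan is to obtain this corollary as an immediate consequence of \cref{thm:rel_approx_sol} and \cref{thm:hatJ_sol}, both specialized to the data $y = T\udag$. Note first that a minimizer $\Phi$ of $J_{T\udag}^\W$ exists by \cref{thm:Jsol} and is necessarily nonzero, since $J_{T\udag}^\W(0) = \infty$ while the infimum is negative; hence the pair $(\Phi/\norm{\Phi}{\W}, 1/\norm{\Phi}{\W})$ is well defined and lies in $U^\W = B_1^\W \times \bbR_+$ (its first component has unit norm, its second is positive). The second assertion of \cref{thm:rel_approx_sol} (with $y = T\udag$) already gives
\[
\hat{J}_{T\udag}^\W\left(\frac{\Phi}{\norm{\Phi}{\W}}, \frac{1}{\norm{\Phi}{\W}}\right) = \min\left\{\hat{J}_{T\udag}^\W(e,s): e \in \W,\ \norm{e}{\W} = 1,\ s > 0\right\},
\]
so it remains only to identify the right-hand side with $\min_{(e,s)\in U^\W}\hat{J}_{T\udag}^\W(e,s) = \min(\hat{J}_{T\udag}^\W + \delta_{U^\W})$, the last equality being the equivalence noted after \eqref{eq:approx_conv_prob}.

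For this I would invoke \cref{thm:hatJ_sol} with $y = T\udag$. Its hypothesis \eqref{eq:conv_surr_cond} holds trivially here: the left-hand side $\norm{T\udag - T\udag}{\W^*}$ vanishes, while the right-hand side is strictly positive because $\min_{(e,s)\in U^\W}\hat{J}_{T\udag}^\W(e,s)$ is negative — this is exactly the negativity established at the start of the proof of \cref{thm:hatJ_sol} from $\scalprod{\udag,\varphi}{} > 0$, together with the density of $\W$ in $\Y$ and \cref{ass:smooth}~\ref{ass_sm_3},~\ref{ass_sm_4}. \cref{thm:hatJ_sol} then tells us that every solution $(e^\star,s^\star)$ of \eqref{eq:approx_conv_prob} with $y = T\udag$ — which exists by \cref{thm:sol_approx_conv} — satisfies $\norm{e^\star}{\W} = 1$ and $s^\star > 0$. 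Hence the minimum of $\hat{J}_{T\udag}^\W$ over the full feasible set $U^\W$ is attained on the slice $\{\norm{e}{\W} = 1,\ s > 0\}$, and since conversely every point of that slice lies in $U^\W$, the two minima coincide.

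Putting the two displays together yields $\hat{J}_{T\udag}^\W(\Phi/\norm{\Phi}{\W}, 1/\norm{\Phi}{\W}) = \min(\hat{J}_{T\udag}^\W + \delta_{U^\W})$; since the pair is feasible it is a global minimizer of $\hat{J}_{T\udag}^\W + \delta_{U^\W}$, which is the claim. The argument is essentially bookkeeping, and the only step requiring any care is the reduction from the boundary slice to the whole feasible set $U^\W$ — i.e.\ ruling out constrained minimizers in the degenerate regime $\norm{e}{\W} < 1$ or $s = 0$ — which is precisely what \cref{thm:hatJ_sol} supplies once its hypothesis is checked for $y = T\udag$.
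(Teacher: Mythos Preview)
Your proof is correct and follows exactly the approach indicated by the paper, which simply states that the claim ``follows immediately from Theorems~\ref{thm:hatJ_sol} and~\ref{thm:rel_approx_sol}.'' You have merely filled in the bookkeeping details (feasibility of the pair, verification of \eqref{eq:conv_surr_cond} for $y = T\udag$, and the identification of the slice minimum with the full constrained minimum), all of which are straightforward; note also that \cref{thm:hatJ_sol} already records the case $y = T\udag$ explicitly, so your verification of its hypothesis is redundant but harmless.
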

\begin{proof}
	This follows immediately from Theorems \ref{thm:hatJ_sol} and \ref{thm:rel_approx_sol}.
\end{proof}

\begin{rem}
	Note that $(\norm{\Phi}{\W}^{-1}\Phi,\norm{\Phi}{\W}^{-1})$ is in general not a minimizer of $\hat{J}_y^\W + \delta_{U^\W}$ if $\Phi$ is a minimizer of $J_y^\W$. This only holds if all minimizers $(e,s)$ of $\hat{J}_y^\W$ satisfy $\norm{e}{\W} = 1$ and $s > 0$.
\end{rem}

\section{Numerical simulations}
\label{sec:num}

\setlength{\fwidthscen}{4.2cm}
\setlength{\fheightscen}{3.2cm}
\setlength{\fwidthcompat}{4.2cm}
\setlength{\fheightcompat}{3.2cm}
\setlength{\fwidthincompat}{4.2cm}
\setlength{\fheightincompat}{3.2cm}
\setlength{\fwidth}{4.0cm}
\setlength{\fheight}{3.2cm}
\newcommand\sigmamincompat{3e-6}
\newcommand\sigmaminincompat{2.2627e-8}
\newcommand\sigmaminprob{1e-3}
\definecolor{mycolor1}{rgb}{0.00000,0.44700,0.74100}
\definecolor{mycolor2}{rgb}{0.85000,0.32500,0.09800}
\definecolor{mycolor3}{rgb}{0.92900,0.69400,0.12500}

In this section we will now perform a numerical case study to investigate the behavior of the optimal test $\Psi_{\Phi^\dagger, c^\dagger}$ and the adaptive test $\Psi^*$ compared to the unregularized test $\Psi_0$. As example for the forward operator we consider a convolution.

\subsection{Problem set-up and considered scenarios}

\rev{The convolution between two functions $h \in L^1(\bbR)$ and $u \in L^2(\bbR)$ is defined by
\begin{equation*}
	(h * u)(x) = \int_{\bbR} h(x - z) u(z) \di z \quad \text{for all}~x \in \bbR.
\end{equation*}
The Fourier transform of $h \in L^1(\bbR)$ is, moreover, defined by
\begin{equation*}
	(\Fourier h)(\xi) = \int_{\bbR} h(x) \exp \left(2\pi i\scalprod{x,\xi}{}\right) \di x \quad \text{for all}~\xi \in \bbR.
\end{equation*}
We consider the convolution operator $Tu := h * u$ on $L^2(\bbR)$ associated with a kernel $h \in L^1(\bbR)$ which is defined in terms of its Fourier transform
\begin{equation}
	\label{eq:def_h}
	(\Fourier h)(\xi) = \left(1 + \rev{b^2}\xi^2\right)^{-a} \quad \text{for all}~\xi \in \bbR,
\end{equation}
where $a \ge \frac12$ and $b = 0.06$. Specifically, we will consider the cases $a = 2$ and $a = 4$.}

\begin{figure}[htp]
	\centering
	\begin{tikzpicture}[baseline]
	\begin{axis}[%
		width=15cm,
		height=3.2cm,
		scale only axis,
		xmin=-0.5,
		xmax=0.5,
	]
		\addplot[color of colormap=250 of viridis] table [x index=0, y index=1] {data/kernel_a2.dat};
		\label{plot:kernel_a2}
		\addplot[color of colormap=750 of viridis] table [x index=0, y index=1] {data/kernel_a4.dat};
		\label{plot:kernel_a4}
	\end{axis}
\end{tikzpicture}%
	\caption{The convolution kernel $h$ for $a = 2$ (\ref{plot:kernel_a2}) and $a = 4$ (\ref{plot:kernel_a4}).}
	\label{fig:kernel}
\end{figure}
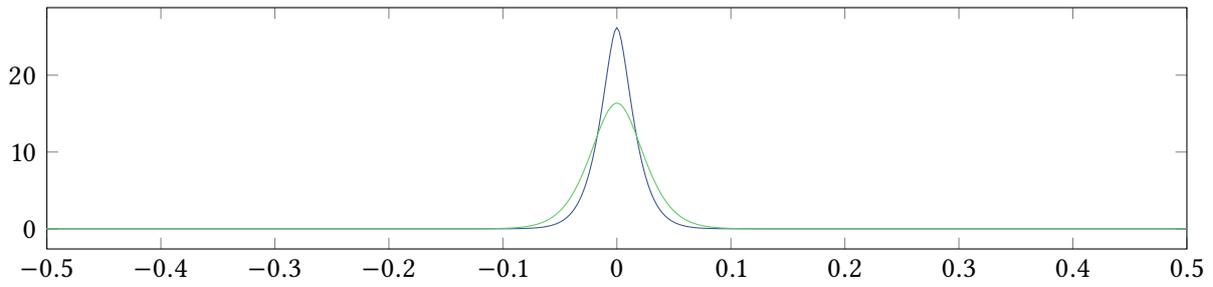

\rev{The convolution between $h$ and $u$ can be approximated by the \emph{periodic convolution}
\[ (\tilde{h} *_P \tilde{u})(x) = \int_{-\frac{P}{2}}^{\frac{P}{2}} \tilde{h}(x - z)\tilde{u}(z) \di z, \quad x \in \bbR, \]
between the $P$-\emph{periodization} $\tilde{h}$ of $h$,
\begin{equation*}
	\tilde{h}(x) := h_{\text{per},P}(x) := \sum_{l \in \bbZ} h(x + lP) \quad \text{for all}~x \in \bbR,
\end{equation*}
and the $P$-periodization $\tilde{u} := u_{\text{per},P}$ of $u$.
In the following, we periodize with $P := 2$ and consider the periodic convolution operator $\widetilde{T}\tilde{u} := \tilde{h} *_P \tilde{u}$ on 
\[ \X := \Y := L^2(-1,1) \]
associated with the periodized kernel $\tilde{h} = h_{\text{per},P} \in L^1(-1,1)$. Throughout, we moreover assume that $\esssupp \udag \subseteq [-\frac12,\frac12]$.}

\rev{We discretize the problem using a uniform grid of size $N = 1024$ and compute the discrete convolution using a fast Fourier transform. For more details on the implementation of the convolution see \cref{sec:impl_conv}.}

Let $\Fper$: $L^2(-P/2,P/2) \to \ell^2(\bbZ)$ denote the \emph{periodic Fourier transform}
\[ (\Fper f)(k) := \widehat{f}(k) := P^{-d} \int_{-\frac{P}{2}}^{\frac{P}{2}} f(x) \exp\left(-\frac{2\pi i}{P}\scalprod{k,x}{}\right) \di x \]
for all $k \in \bbZ^d$.
For $t \ge 0$, we define the Sobolev spaces
\[ H^t(-1,1) = \left\{f \in L^2(-1,1): k \mapsto (1 + k^2)^\frac{t}{2}\widehat{f}(k) \in \ell^2(\bbZ)\right\}, \]
equipped with the norm
\[ \norm{f}{H^t} = P^\frac12 \left(\sum_{k \in \bbZ} \left(1 + k^2\right)^t \rev{\abs{\widehat{f}(k)}^2}\right)^\frac12. \]

\begin{lem}
	\label{ran_T_H4}
	The operator $\widetilde{T}$ satisfies $\ran \widetilde{T} \subseteq \rev{H^{2a}(-1,1)}$ and $\widetilde{T}$: $L^2(-1,1) \to \rev{H^{2a}(-1,1)}$ is bounded.
\end{lem}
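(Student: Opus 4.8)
The plan is to diagonalize $\widetilde{T}$ with the periodic Fourier transform and estimate the resulting Fourier multiplier. First I would record the action of $\widetilde T$ on Fourier coefficients. Since $\tilde h \in L^1(-1,1)$, the convolution $\widetilde{T}\tilde u = \tilde h *_P \tilde u$ lies in $L^2(-1,1)$ for every $\tilde u \in L^2(-1,1)$ by Young's inequality, so it has periodic Fourier coefficients $\widehat{\,\cdot\,} = \Fper(\cdot)$. With the normalization of $\Fper$ from the excerpt (and $d = 1$), Fubini's theorem together with the substitution $x \mapsto x + z$ and the $P$-periodicity of $\tilde h$, $\tilde u$, and the exponentials gives
\[ \widehat{\widetilde{T}\tilde u}(k) \;=\; P\,\widehat{\tilde h}(k)\,\widehat{\tilde u}(k) \qquad \text{for all } k \in \bbZ. \]
Since $\tilde h = h_{\mathrm{per},P}$ is the $P$-periodization of $h$, a change of variables in each summand of the periodization yields the Poisson-type identity $\widehat{\tilde h}(k) = P^{-1}(\Fourier h)(k/P)$ (the sign in the exponential is irrelevant because $\Fourier h$ is even). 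Substituting the explicit formula \eqref{eq:def_h} gives
\[ \widehat{\widetilde{T}\tilde u}(k) \;=\; m_k\,\widehat{\tilde u}(k), \qquad m_k := \Big(1 + \tfrac{b^2}{P^2}\,k^2\Big)^{-a}. \]

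It then remains to bound $(1+k^2)^{2a} m_k^2 = \big((1+k^2)/(1 + (b^2/P^2)k^2)\big)^{2a}$ uniformly in $k$. Setting $c := b^2/P^2$, which satisfies $0 < c < 1$ (here $c = 0.0009$), the function $g(t) := (1+t)/(1+ct)$ has $g'(t) = (1-c)(1+ct)^{-2} > 0$ on $t \ge 0$ and hence is increasing with $\sup_{t\ge 0} g(t) = 1/c = P^2/b^2$; therefore $(1+k^2)^{2a} m_k^2 \le (P/b)^{4a}$ for all $k \in \bbZ$. Combining this with Parseval's identity, in the normalization used for the $H^t$-norm in the excerpt, gives
\[ \norm{\widetilde{T}\tilde u}{H^{2a}}^2 \;=\; P\sum_{k\in\bbZ} (1+k^2)^{2a} m_k^2\,\abs{\widehat{\tilde u}(k)}^2 \;\le\; \Big(\tfrac{P}{b}\Big)^{4a}\, P\sum_{k\in\bbZ}\abs{\widehat{\tilde u}(k)}^2 \;=\; \Big(\tfrac{P}{b}\Big)^{4a}\norm{\tilde u}{L^2}^2, \]
which proves simultaneously that $\ran\widetilde{T} \subseteq H^{2a}(-1,1)$ and that $\norm{\widetilde{T}}{L^2\to H^{2a}} \le (P/b)^{2a}$.

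The argument is essentially bookkeeping, and the only places that require care are (i) keeping the three normalization prefactors consistent — the $P^{-d}$ in the definition of $\Fper$, the constant in the periodic convolution $*_P$, and the $P^{1/2}$ in the $H^t$-norm — and (ii) justifying the interchange of summation and integration in the periodization identity, which is legitimate because $h \in L^1(\bbR)$. The multiplier estimate itself is elementary: once $c < 1$ is observed the supremum of $g$ is $1/c$, and even without that one would obtain the crude bound $1$. I therefore do not expect a genuine obstacle; the lemma is a routine consequence of the spectral representation of the convolution, and the explicit constant $(P/b)^{2a}$ will be reused when verifying \cref{ass:smooth}\ref{ass_sm_3}.
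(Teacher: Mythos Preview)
Your proof is correct and follows essentially the same route as the paper: diagonalize $\widetilde{T}$ via the periodic convolution theorem and Poisson summation to identify the Fourier multiplier $m_k = (1 + (b/P)^2 k^2)^{-a}$, then bound $(1+k^2)^a m_k$ uniformly in $k$. You are more careful with the constants and obtain the explicit operator bound $(P/b)^{2a}$, whereas the paper simply records the existence of a constant $C$ with $|m_k| \le C(1+k^2)^{-a}$ and appeals directly to the definition of $H^{2a}$.
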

\begin{proof}
	By \rev{the periodic convolution theorem and} Poisson's summation formula, we have
	\begin{equation*}
		\begin{split}
			\Fper\left(\widetilde{T}\tilde{u}\right)(k) &= \Fper\left(\tilde{h} * \tilde{u}\right)(k) = P\Fper h_{\text{per},P}(k) \cdot \Fper \tilde{u}(k) \\
			&= (\Fourier h)\left(\frac{k}{P}\right) \cdot \Fper\tilde{u}(k) = \rev{\left(1 + \frac{0.03^2}{P^2} k^2\right)^{-a}} \Fper\tilde{u}(k) \\
			&\le C \rev{(1 + k^2)^{-a}} \Fper\tilde{u}(k)
		\end{split}
	\end{equation*}
	for some $C > 0$, so that $\ran \widetilde{T} \subseteq \rev{H^{2a}(-1,1)}$ by definition of \rev{$H^{2a}(-1,1)$} and $\widetilde{T}$ is bounded from $L^2(-1,1)$ to \rev{$H^{2a}(-1,1)$}.
\end{proof}

As the feature of interest, we aim to test whether
\[ \int_{0}^{l} \udag(x) \di x = 0 \]
for some value $l \in \left(0,1\right)$ under the a priori assumption that $\udag$ is a density. As $\udag$ is then especially non-negative, we obtain for any non-negative function $\varphi_l \in L^2(-1,1)$ with $\esssupp (\varphi_l) = \left[0,l\right]$ that
\[
\left\langle \varphi_l, \udag\right\rangle = 0 \quad \Leftrightarrow \quad \int_{0}^{l} \udag(x) \di x = 0.
\]
\rev{A natural choice for $\varphi_l$ is a symmetric $\beta$-kernel $p_l$ depending on a parameter $\beta > 0$ on the interval $[0,l]$, i.e.,
\[
\varphi_{l,\beta}(x) = p_l(x;\beta) \propto \begin{cases} x^{\beta - 1}\left(l-x\right)^{\beta-1} & \text{if } x \in [0,l], \\ 0 & \text{else}. \end{cases}.
\]
As $\overline{\ran T^*}  = L^2 \left(-1,1\right)$, we have $\varphi_{l,\beta} \in \overline{\ran T^*}$ for all $\beta > 0$. Note that for $\beta = 1$, the function $\varphi_{l,1}$ is just the indicator function of the interval $[0,l]$. However, to ensure the stronger condition $\varphi_{l,\beta} \in \ran T^*$, which allows to formally define the unregularized test $\Psi_0$, it is in view of Lemma \ref{ran_T_H4} necessary to choose $\beta \geq 1 + 2a$.
Furthermore we always normalize such that $\smallnorm{\varphi_{l,\beta}}{L^ 2(-1,1)} = 1$.}

\rev{As the truth $\udag$ we choose a scaled and shifted version of the same kernel, i.e.,
\[ u_{l,\lambda,\gamma}^\dagger(x) \propto p_l\left(x + \frac{1 - \lambda}{l};\gamma\right) \]
with $\lambda \in [0,1]$ and $\gamma > 0$, and normalize such that $\smallnorm{u_{l,\lambda,\gamma}^\dagger}{L^1(-1,1)} = 1$. This way, $u_{l,\lambda,\gamma}^\dagger$ is in fact a density supported on the interval $[(1 - \lambda)l, (2 - \lambda)l]$. 
The a priori information that $\udag$ is a density is reflected by choosing
\begin{equation*}
	\V = L^1(-1,1), \quad\text{and}\quad \V' = L^\infty(-1,1),
\end{equation*}
which directly yields $\phi_{l,\beta} \in \V'$ for all $\beta \geq 1$ and $u_{l,\lambda,\gamma}^\dagger \in \V \cap \X$ with $\smallnorm{u_{l,\lambda,\gamma}^\dagger}{\V} = 1$ for all $\lambda \in [0,1]$ and $\gamma \ge 1$. That is, \cref{ass:smooth} \ref{ass_sm_4} and \ref{ass_sm_2} are satisfied.}
The spaces $\V$ and $\V'$ are, moreover, compatible with the forward operator $\widetilde{T}$ and our choice of $\X$ in the sense of \cref{ass:smooth}.
\begin{lem}
	\label{V_compatible}
	The operator $\widetilde{T}$ satisfies \cref{ass:smooth} \ref{ass_sm_1} and \ref{ass_sm_3} with $\V = L^1(-1,1)$, $\V' = L^\infty(-1,1)$.
\end{lem}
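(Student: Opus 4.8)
The plan is to verify items \ref{ass_sm_1} and \ref{ass_sm_3} of \cref{ass:smooth} separately. Item \ref{ass_sm_1} is an immediate consequence of Hölder's inequality, while item \ref{ass_sm_3} follows from \cref{ran_T_H4} combined with the self-adjointness of $\widetilde T$ and the Sobolev embedding $H^{2a}(-1,1) \hookrightarrow L^\infty(-1,1)$.

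For \ref{ass_sm_1}: since $\X = \Y = L^2(-1,1)$, the dual pairing of $v \in \V' \cap \X^* = L^\infty(-1,1) \cap L^2(-1,1)$ with $u \in \V \cap \X = L^1(-1,1) \cap L^2(-1,1)$ is $\scalprod{v,u}{} = \int_{-1}^1 v(x)u(x)\,\di x$, and Hölder's inequality with exponents $\infty$ and $1$ yields $\int_{-1}^1 v u \,\di x \le \norm{v}{L^\infty(-1,1)}\,\norm{u}{L^1(-1,1)}$, which is exactly the asserted estimate with constant $1$.

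For \ref{ass_sm_3}: I would first note that $\widetilde T$ is self-adjoint on $L^2(-1,1)$. Indeed, $\Fourier h$ in \eqref{eq:def_h} is real-valued and even, so $h$, and hence its $P$-periodization $\tilde h$, is real-valued and even; a computation with Fubini then gives $\widetilde T^* = \widetilde T$. Therefore $\ran \widetilde T^* = \ran \widetilde T \subseteq H^{2a}(-1,1)$ by \cref{ran_T_H4}, and it remains to bound $\widetilde T^*$ as a map into $L^\infty(-1,1)$. This follows by composing the bounded map $\widetilde T \colon L^2(-1,1) \to H^{2a}(-1,1)$ from \cref{ran_T_H4} with the continuous embedding $H^{2a}(-1,1) \hookrightarrow L^\infty(-1,1)$; the latter holds since $2a \ge 1 > \frac12$, because for $f \in H^{2a}(-1,1)$ one has $\abs{f(x)} \le \sum_{k\in\bbZ}\abs{\widehat f(k)} \le \bigl(\sum_{k\in\bbZ}(1+k^2)^{-2a}\bigr)^{1/2}\bigl(\sum_{k\in\bbZ}(1+k^2)^{2a}\abs{\widehat f(k)}^2\bigr)^{1/2} = C\,\norm{f}{H^{2a}}$ by Cauchy--Schwarz, with the first series finite as $4a > 1$.

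The argument is essentially routine; the only mild points of care are the self-adjointness of $\widetilde T$ — which could alternatively be bypassed by estimating $\widetilde T^* u = \tilde h *_P u$ directly via $\norm{\tilde h *_P u}{L^\infty(-1,1)} \le \norm{\tilde h}{L^2(-1,1)}\,\norm{u}{L^2(-1,1)}$ using Cauchy--Schwarz and $\tilde h \in L^2(-1,1)$ (again because $4a > 1$) — and the verification of the constant in the periodic Sobolev embedding. I do not expect any genuine obstacle.
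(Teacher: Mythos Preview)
Your proposal is correct and follows essentially the same route as the paper's own proof: item \ref{ass_sm_1} via H\"older's inequality, and item \ref{ass_sm_3} via the self-adjointness $\widetilde T^* = \widetilde T$, \cref{ran_T_H4}, and the Sobolev embedding $H^{2a}(-1,1) \hookrightarrow L^\infty(-1,1)$ for $a \ge \tfrac12$. You supply more detail (an explicit Cauchy--Schwarz argument for the embedding and an alternative direct bound on the convolution), but the underlying argument is the same.
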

\begin{proof}
Ad (1): It holds trivially that
\[ \scalprod{v',v}{\X^* \times \X} = \scalprod{v',v}{L^2} \le \norm{v'}{L^\infty} \norm{v}{L^1} = \norm{v'}{\V'} \norm{v}{\V} \]
for all $v' \in \V' \cap \X^*$ and $v \in \V \cap \X$.

Ad (3): First note that $T^* = T$. By the Sobolev embedding theorem, \rev{$H^{2a}(-1,1)$} is continuously embedded into $L^\infty(-1,1)$ \rev{for all $a \ge \frac12$}. Now, the proposition follows form \cref{ran_T_H4}.
\end{proof}

\rev{In total we now consider three testing scenarios, cf. also Figure \ref{fig:Scen1}:
\begin{itemize}
	\item[(S1)] The \emph{compatible smooth} scenario $\beta = 1 + 2a$ and $\gamma = 2$, where the unregularized test is formally available and $\udag$ is smooth, i.e., we test
\[
H_0^{\text{cs}} : \left\langle \varphi_{l,1 + 2a}, u_{l,\lambda,2}^\dagger\right\rangle =  0\qquad \text{vs.}\qquad H_1^{\text{cs}} : \left\langle \varphi_{l,1 + 2a}, u_{l,\lambda,2}^\dagger\right\rangle > 0.
\]
\item[(S2)] The \emph{compatible nonsmooth} scenario $\beta = 1 + 2a$ and $\gamma 1$, where the unregularized test is formally available but $\udag$ is not smooth, i.e., we test
\[
H_0^{\text{cn}} : \left\langle \varphi_{l,1 + 2a}, u_{l,\lambda,1}^\dagger\right\rangle =  0\qquad \text{vs.}\qquad H_1^{\text{cn}} : \left\langle \varphi_{l,1 + 2a}, u_{l,\lambda,1}^\dagger\right\rangle > 0.
\]
\item[(S3)] The \emph{incompatible smooth} scenario $\beta = 1$ and $\gamma = 2$, where the unregularized test is formally not available but $\udag$ is smooth, i.e., we test
\[
H_0^{\text{is}} : \left\langle \varphi_{l,1}, u_{l,\lambda,2}^\dagger\right\rangle =  0\qquad \text{vs.}\qquad H_1^{\text{is}} : \left\langle \varphi_{l,1}, u_{l,\lambda,2}^\dagger\right\rangle > 0.
\]
\end{itemize}
We consider these scenarios for different values of $a$ and $l$ (influencing the difficulty of the problem in the sense of ill-posedness) as well as different values of $\lambda$ (influencing the size $\smallscalprod{\phi,\udag}{}$ of the investigated feature).}

\begin{figure}[htp]
	\subcaptionbox*{Compatible smooth scenario (S1)}[0.33\textwidth]{\begin{tikzpicture}[baseline]
	\begin{axis}[%
		width=\fwidthscen,
		height=\fheightscen,
		scale only axis,
		xmin=-0.05,
		xmax=0.1,
		ymax=42,
		xticklabel style={/pgf/number format/.cd,fixed,precision=2},
	]
		\addplot[color of colormap=200 of viridis] table [x index=0, y index=1] {data/scenario_phi5_l20.dat};
		\label{plot:phi}
		\addplot[color of colormap=400 of viridis] table [x index=0, y index=1] {data/scenario_u2_l20_k0.dat};
		\label{plot:u1dag}
		\addplot[color of colormap=600 of viridis] table [x index=0, y index=1] {data/scenario_u2_l20_k7.dat};
		\label{plot:u2dag}
		\addplot[color of colormap=800 of viridis] table [x index=0, y index=1] {data/scenario_u2_l20_k13.dat};
		\label{plot:u3dag}
	\end{axis}
\end{tikzpicture}
	\subcaptionbox*{Compatible nonsmooth scenario (S2)}[0.33\textwidth]{\begin{tikzpicture}[baseline]
	\begin{axis}[%
		width=\fwidthscen,
		height=\fheightscen,
		scale only axis,
		xmin=-0.05,
		xmax=0.1,
		ymax=42,
		xticklabel style={/pgf/number format/.cd,fixed,precision=2},
	]
		\addplot[color of colormap=200 of viridis] table [x index=0, y index=1] {data/scenario_phi5_l20.dat};
		\addplot[color of colormap=400 of viridis] table [x index=0, y index=1] {data/scenario_u1_l20_k0.dat};
		\addplot[color of colormap=600 of viridis] table [x index=0, y index=1] {data/scenario_u1_l20_k7.dat};
		\addplot[color of colormap=800 of viridis] table [x index=0, y index=1] {data/scenario_u1_l20_k13.dat};
		\addplot[color of colormap=400 of viridis, dashed] table [x index=0, y index=1] {data/scenario_u1_l20_k0_jumps.dat};
		\addplot[color of colormap=600 of viridis, dashed] table [x index=0, y index=1] {data/scenario_u1_l20_k7_jumps.dat};
		\addplot[color of colormap=800 of viridis, dashed] table [x index=0, y index=1] {data/scenario_u1_l20_k13_jumps.dat};
	\end{axis}
\end{tikzpicture}
	\subcaptionbox*{Incompatible smooth scenario (S3)}[0.33\textwidth]{\begin{tikzpicture}[baseline]
	\begin{axis}[%
		width=\fwidthscen,
		height=\fheightscen,
		scale only axis,
		xmin=-0.05,
		xmax=0.1,
		ymax=42,
		xticklabel style={/pgf/number format/.cd,fixed,precision=2},
	]
		\addplot[color of colormap=200 of viridis] table [x index=0, y index=1] {data/scenario_phi1_l20.dat};
		\addplot[color of colormap=200 of viridis, dashed] table [x index=0, y index=1] {data/scenario_phi1_l20_jumps.dat};
		\addplot[color of colormap=400 of viridis] table [x index=0, y index=1] {data/scenario_u2_l20_k0.dat};
		\addplot[color of colormap=600 of viridis] table [x index=0, y index=1] {data/scenario_u2_l20_k7.dat};
		\addplot[color of colormap=800 of viridis] table [x index=0, y index=1] {data/scenario_u2_l20_k13.dat};
	\end{axis}
\end{tikzpicture}
	\caption{The function $\phi_{l,\beta}$ (\ref{plot:phi}) and the truth $u_{l,\lambda,\gamma}^\dagger$ for $\lambda = 1$ (\ref{plot:u1dag}), $\lambda = \frac23$ (\ref{plot:u2dag}), and $\lambda = \frac13$ (\ref{plot:u3dag}) in case of $a = 2$ and $l = \frac{5}{128}$.}
	\label{fig:Scen1}
\end{figure}
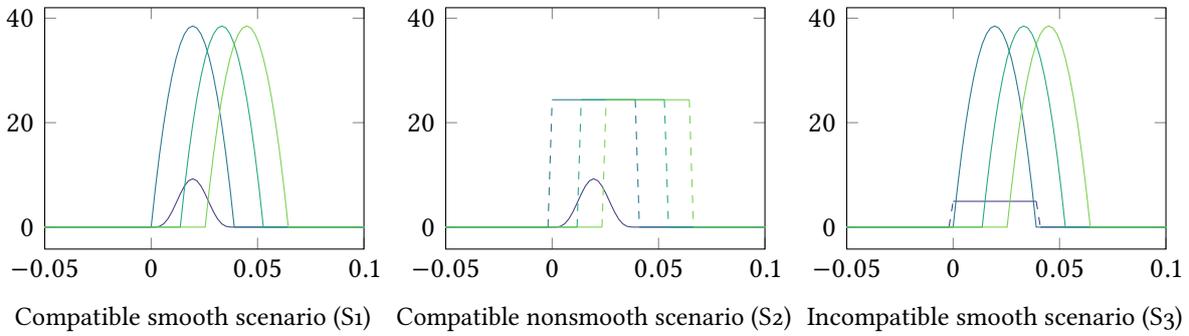

For our simulations we use Gaussian white noise. For $t > \frac{d}{2} = \frac12$, the Gaussian white noise process $Z$ almost surely takes values in $H^{-t}(-1,1) = H^t(-1,1)^*$, i.e., $Z$ is almost surely a bounded linear functional on $H^t(-1,1)$. For sharper results, see \cite{veraar2011}. In the following, we therefore choose
\begin{equation*}
	\calZ = H^t(-1,1)
\end{equation*}
for different values of $t \ge 0$.

Throughout what follows, all tests are constructed to have a level of at most $\alpha = 0.1$.
We compute the exact power of the unregularized test $\Psi_{\Phi_0,c_0}$ and the oracle test $\Psi_{\Phi^\dagger,c^\dagger}$ based upon expressions \eqref{eq:power_unreg} and \eqref{eq:power_oracle}.
\rev{For more details, see \cref{sec:norm_disc}.}
For the adaptive test $\Psi^*(\cdot;Y_1)$, we compute the empirical power
\begin{align*}
	\P{1}{\Psi^* = 1} &= \E{\Psi^*(Y_2,Y_1)} = \E{\E{\Psi^*(Y_2,Y_1)\vert Y_1}} \\
	&\approx \frac{1}{M} \sum_{m = 1}^M \P{1}{\Psi^*(Y_2,y_1^m) = 1\vert Y_1 = y_1^M} \\
	&= \frac1M\sum_{\genfrac{}{}{0pt}{}{m=1}{\min J_{y_1^m}^{\calZ}~\text{exists}}}^M Q\left(q_\alpha^\Normal - \frac{J_{T\udag}^\Y(\Phi(y_1^m))}{\sigma}\right)
\end{align*}
using $M = 100$ independent samples $\{y_1^1, \dots, y_1^M\}$ of the data. 
\rev{The probe element of the adaptive test is found as a solution of the convex surrogate problem \eqref{eq:approx_conv_prob} discussed in \cref{sec:computability}, where the minimizer is computed numerically using a primal-dual proximal splitting method. For more details on the implementation of this method, see \cref{sec:impl_min}.}
Here and in what follows, existence of $\min J_{y_1^m}^{\calZ}$ is numerically understood as convergence of the discussed minimization algorithm to \rev{an element} $\left(e,s\right)$ with $s \neq 0$.

Since the adaptive test uses two samples of the data compared to one in case of the unregularized and oracle test, we treat the latter two as if they, too, had access to two samples of the data by reducing the noise level of their data by a factor of $\sqrt{2}$.

\subsection{Numerical results}

\subsubsection{Compatible smooth scenario (S1)}

The results for the compatible smooth scenario (S1) are depicted in Figure \ref{fig:compat}.
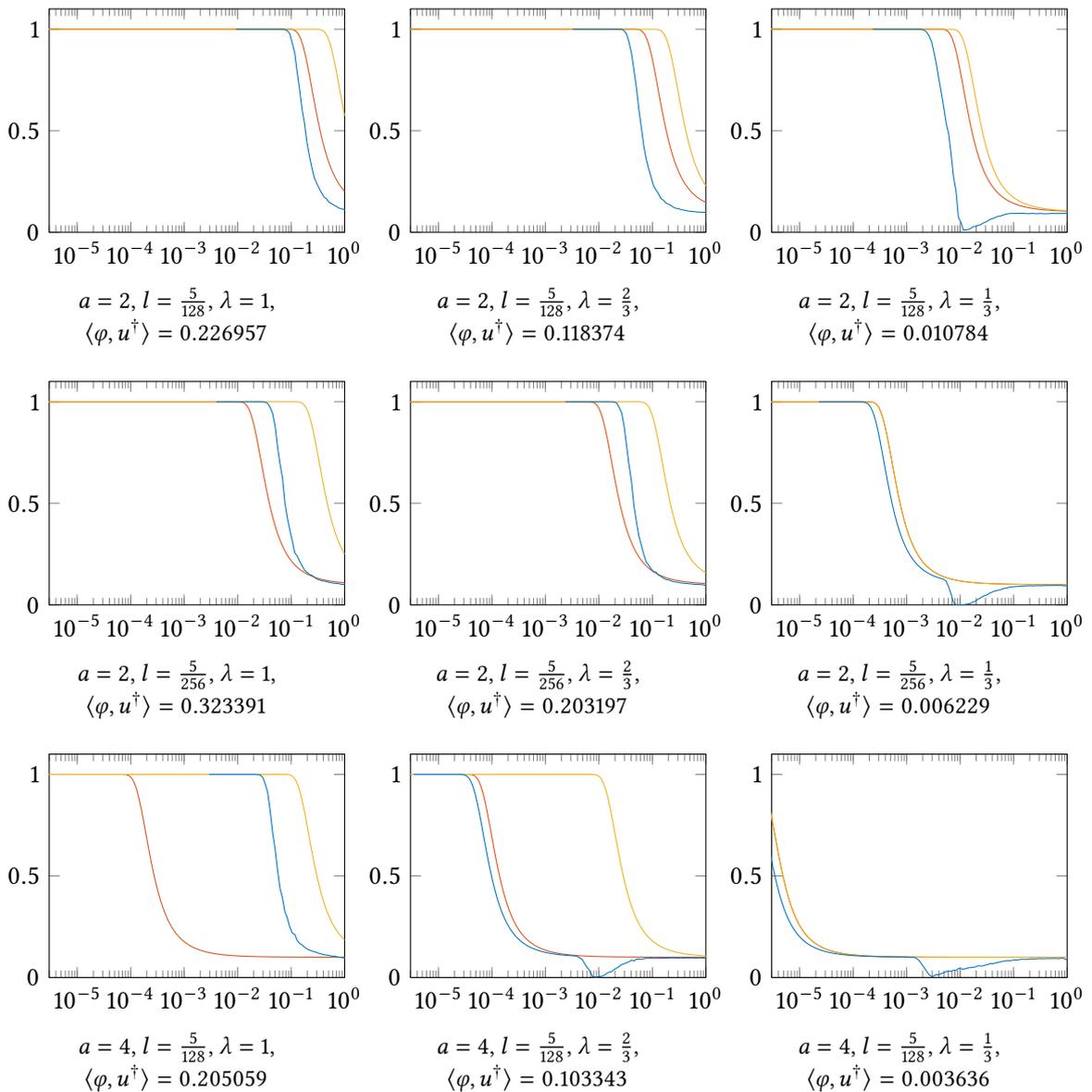
\begin{figure}[!htb]
	\centering
	\subcaptionbox*{$a = 2$, $l = \frac{5}{128}$, $\lambda = 1$,\\$\smallscalprod{\phi, \udag}{} = 0.226957$}[0.32\textwidth][l]{\begin{tikzpicture}[baseline]

\begin{axis}[%
	width=\fwidthcompat,
	height=\fheightcompat,
	scale only axis,
	xmode=log,
	xmin=\sigmamincompat,
	xmax=1,
	xminorticks=true,
	ymin=0,
	ymax=1.1,
]

\addplot[color=mycolor2] table [x index=0, y index=1] {data/power_prob1_a2_t51_phi5_u2_l20_k0_unreg.dat}; 
\addplot[color=mycolor3] table [x index=0, y index=2] {data/power_prob1_a2_t51_phi5_u2_l20_k0_unreg.dat}; 
\addplot[color=mycolor1] table [x index=0, y index=1] {data/power_prob1_a2_t51_phi5_u2_l20_k0_M100.dat}; 

\end{axis}

\end{tikzpicture}
	\subcaptionbox*{$a = 2$, $l = \frac{5}{128}$, $\lambda = \frac23$,\\$\smallscalprod{\phi, \udag}{} = 0.118374$}[0.32\textwidth][l]{\begin{tikzpicture}[baseline]

\begin{axis}[%
	width=\fwidthcompat,
	height=\fheightcompat,
	scale only axis,
	xmode=log,
	xmin=\sigmamincompat,
	xmax=1,
	xminorticks=true,
	ymin=0,
	ymax=1.1,
]

\addplot[color=mycolor2] table [x index=0, y index=1] {data/power_prob1_a2_t51_phi5_u2_l20_k7_unreg.dat}; 
\addplot[color=mycolor3] table [x index=0, y index=2] {data/power_prob1_a2_t51_phi5_u2_l20_k7_unreg.dat}; 
\addplot[color=mycolor1] table [x index=0, y index=1] {data/power_prob1_a2_t51_phi5_u2_l20_k7_M100.dat}; 

\end{axis}

\end{tikzpicture}
	\subcaptionbox*{$a = 2$, $l = \frac{5}{128}$, $\lambda = \frac13$,\\$\smallscalprod{\phi, \udag}{} = 0.010784$}[0.32\textwidth][l]{\begin{tikzpicture}[baseline]

\begin{axis}[%
	width=\fwidthcompat,
	height=\fheightcompat,
	scale only axis,
	xmode=log,
	xmin=\sigmamincompat,
	xmax=1,
	xminorticks=true,
	ymin=0,
	ymax=1.1,
]

\addplot[color=mycolor2] table [x index=0, y index=1] {data/power_prob1_a2_t51_phi5_u2_l20_k13_unreg.dat}; 
\addplot[color=mycolor3] table [x index=0, y index=2] {data/power_prob1_a2_t51_phi5_u2_l20_k13_unreg.dat}; 
\addplot[color=mycolor1] table [x index=0, y index=1] {data/power_prob1_a2_t51_phi5_u2_l20_k13_M100.dat}; 

\end{axis}

\end{tikzpicture}
	\bigskip
	\subcaptionbox*{$a = 2$, $l = \frac{5}{256}$, $\lambda = 1$,\\$\smallscalprod{\phi, \udag}{} = 0.323391$}[0.32\textwidth][l]{\begin{tikzpicture}[baseline]

\begin{axis}[%
	width=\fwidthcompat,
	height=\fheightcompat,
	scale only axis,
	xmode=log,
	xmin=\sigmamincompat,
	xmax=1,
	xminorticks=true,
	ymin=0,
	ymax=1.1,
]

\addplot[color=mycolor2] table [x index=0, y index=1] {data/power_prob1_a2_t51_phi5_u2_l10_k0_unreg.dat}; 
\addplot[color=mycolor3] table [x index=0, y index=2] {data/power_prob1_a2_t51_phi5_u2_l10_k0_unreg.dat}; 
\addplot[color=mycolor1] table [x index=0, y index=1] {data/power_prob1_a2_t51_phi5_u2_l10_k0_M100.dat}; 

\end{axis}

\end{tikzpicture}
	\subcaptionbox*{$a = 2$, $l = \frac{5}{256}$, $\lambda = \frac23$,\\$\smallscalprod{\phi, \udag}{} = 0.203197$}[0.32\textwidth][l]{\begin{tikzpicture}[baseline]

\begin{axis}[%
	width=\fwidthcompat,
	height=\fheightcompat,
	scale only axis,
	xmode=log,
	xmin=\sigmamincompat,
	xmax=1,
	xminorticks=true,
	ymin=0,
	ymax=1.1,
]

\addplot[color=mycolor2] table [x index=0, y index=1] {data/power_prob1_a2_t51_phi5_u2_l10_k3_unreg.dat}; 
\addplot[color=mycolor3] table [x index=0, y index=2] {data/power_prob1_a2_t51_phi5_u2_l10_k3_unreg.dat}; 
\addplot[color=mycolor1] table [x index=0, y index=1] {data/power_prob1_a2_t51_phi5_u2_l10_k3_M100.dat}; 

\end{axis}

\end{tikzpicture}
	\subcaptionbox*{$a = 2$, $l = \frac{5}{256}$, $\lambda = \frac13$,\\$\smallscalprod{\phi, \udag}{} = 0.006229$}[0.32\textwidth][l]{\begin{tikzpicture}[baseline]

\begin{axis}[%
	width=\fwidthcompat,
	height=\fheightcompat,
	scale only axis,
	xmode=log,
	xmin=\sigmamincompat,
	xmax=1,
	xminorticks=true,
	ymin=0,
	ymax=1.1,
]

\addplot[color=mycolor2] table [x index=0, y index=1] {data/power_prob1_a2_t51_phi5_u2_l10_k7_unreg.dat}; 
\addplot[color=mycolor3] table [x index=0, y index=2] {data/power_prob1_a2_t51_phi5_u2_l10_k7_unreg.dat}; 
\addplot[color=mycolor1] table [x index=0, y index=1] {data/power_prob1_a2_t51_phi5_u2_l10_k7_M100.dat}; 

\end{axis}

\end{tikzpicture}
	\bigskip
	\subcaptionbox*{$a = 4$, $l = \frac{5}{128}$, $\lambda = 1$,\\$\smallscalprod{\phi, \udag}{} = 0.205059$}[0.32\textwidth][l]{\begin{tikzpicture}[baseline]

\begin{axis}[%
	width=\fwidthcompat,
	height=\fheightcompat,
	scale only axis,
	xmode=log,
	xmin=\sigmamincompat,
	xmax=1,
	xminorticks=true,
	ymin=0,
	ymax=1.1,
]

\addplot[color=mycolor2] table [x index=0, y index=1] {data/power_prob1_a4_t51_phi9_u2_l20_k0_unreg.dat}; 
\addplot[color=mycolor3] table [x index=0, y index=2] {data/power_prob1_a4_t51_phi9_u2_l20_k0_unreg.dat}; 
\addplot[color=mycolor1] table [x index=0, y index=1] {data/power_prob1_a4_t51_phi9_u2_l20_k0_M100.dat}; 

\end{axis}

\end{tikzpicture}
	\subcaptionbox*{$a = 4$, $l = \frac{5}{128}$, $\lambda = \frac23$,\\$\smallscalprod{\phi, \udag}{} = 0.103343$}[0.32\textwidth][l]{\begin{tikzpicture}[baseline]

\begin{axis}[%
	width=\fwidthcompat,
	height=\fheightcompat,
	scale only axis,
	xmode=log,
	xmin=\sigmamincompat,
	xmax=1,
	xminorticks=true,
	ymin=0,
	ymax=1.1,
]

\addplot[color=mycolor2] table [x index=0, y index=1] {data/power_prob1_a4_t51_phi9_u2_l20_k7_unreg.dat}; 
\addplot[color=mycolor3] table [x index=0, y index=2] {data/power_prob1_a4_t51_phi9_u2_l20_k7_unreg.dat}; 
\addplot[color=mycolor1] table [x index=0, y index=1] {data/power_prob1_a4_t51_phi9_u2_l20_k7_M100.dat}; 

\end{axis}

\end{tikzpicture}
	\subcaptionbox*{$a = 4$, $l = \frac{5}{128}$, $\lambda = \frac13$,\\ $\smallscalprod{\phi, \udag}{} = 0.003636$}[0.32\textwidth][l]{\begin{tikzpicture}[baseline]

\begin{axis}[%
	width=\fwidthcompat,
	height=\fheightcompat,
	scale only axis,
	xmode=log,
	xmin=\sigmamincompat,
	xmax=1,
	xminorticks=true,
	ymin=0,
	ymax=1.1,
]

\addplot[color=mycolor2] table [x index=0, y index=1] {data/power_prob1_a4_t51_phi9_u2_l20_k13_unreg.dat}; 
\addplot[color=mycolor3] table [x index=0, y index=2] {data/power_prob1_a4_t51_phi9_u2_l20_k13_unreg.dat}; 
\addplot[color=mycolor1] table [x index=0, y index=1] {data/power_prob1_a4_t51_phi9_u2_l20_k13_M100.dat}; 

\end{axis}

\end{tikzpicture}
	\caption{Exact powers of the unregularized test (\ref{plot:unreg}), the oracle test (\ref{plot:oracle}), and empirical power of the adaptive test for $\calZ = H^{0.51}$ (\ref{plot:adaptive_Ht}) from $100$ samples against the noise level $\sigma$ in the compatible smooth scenario (S1) \rev{for $a \in \{2,4\}$, $l \in \{\frac{5}{128},\frac{5}{256}\}$, and $\lambda \in \left\{\frac13,\frac23,1\right\}$. The top row shows the results for $a = 2$ and $l = \frac{5}{128}$, the middle row the results for $a = 2$ and $l = \frac{5}{256}$, and the bottom row the results for $a = 2$ and $l = \frac{5}{128}$.}}
	\label{fig:compat}
\end{figure}
We find that the optimal test $\Psi_{\Phi^\dagger, c^\dagger}$ is --- in agreement with the theory --- superior over the unregularized test $\Psi_0$ and as to be expected also over the adaptive test $\Psi^*$. This shows that regularized hypothesis testing in fact resolves the issue (I2) raised in the introduction. It might, however, seem surprising that the unregularized test $\Psi_0$ sometimes shows a better power than the adaptive test $\Psi^*$. 
\rev{We find that this is only the case for milder ill-posed problems, i.e., for $a = 2$ and $l = \frac{5}{128}$.
For smaller values of $l$,} all three tests lose some power due to the smaller support size, but for the unregularized test $\Psi_0$, this loss is by far larger than for the other tests. Especially, the adaptive test $\Psi^*$ has now in all relevant situations a larger power than $\Psi_0$.
In the case $\lambda = \frac13$, none of the three tests achieves a significantly nontrivial power within the considerd range of the noise level. This is caused by a particularly small feature size $\langle \phi_{5/256,5}, u_{5/256,1/3,2}^\dagger \rangle$ in this case.
\rev{The power plots for $a = 4$ illustrate that a higher order of ill-posedness causes a higher difficulty of the problem, but the oracle and the adaptive test are in principle able to cope with this as visible in case $\lambda = 1$. However, if the support overlap $\lambda$ and with it also the feature size $\smallscalprod{\phi,\udag}{}$ becomes smaller, first the adaptive test and finally also the oracle test fall back to the unregularized test.}

\subsubsection{Compatible nonsmooth scenario (S2)}

The results for the compatible nonsmooth scenario (S2) are depicted in Figure \ref{fig:compat_nonsmooth}.
\begin{figure}[!htb]
	\centering
	\subcaptionbox*{$a = 2$, $l = \frac{5}{128}$, $\lambda = 1$,\\$\smallscalprod{\phi, \udag}{} = 0.158114$}[0.32\textwidth][l]{\begin{tikzpicture}[baseline]

\begin{axis}[%
	width=\fwidthcompat,
	height=\fheightcompat,
	scale only axis,
	xmode=log,
	xmin=\sigmamincompat,
	xmax=1,
	xminorticks=true,
	ymin=0,
	ymax=1.1,
]

\addplot[color=mycolor2] table [x index=0, y index=1] {data/power_prob1_a2_t51_phi5_u1_l20_k0_unreg.dat}; 
\addplot[color=mycolor3] table [x index=0, y index=2] {data/power_prob1_a2_t51_phi5_u1_l20_k0_unreg.dat}; 
\addplot[color=mycolor1] table [x index=0, y index=1] {data/power_prob1_a2_t51_phi5_u1_l20_k0_M100.dat}; 

\end{axis}

\end{tikzpicture}
	\subcaptionbox*{$a = 2$, $l = \frac{5}{128}$, $\lambda = \frac23$,\\$\smallscalprod{\phi, \udag}{} = 0.137339$}[0.32\textwidth][l]{\begin{tikzpicture}[baseline]

\begin{axis}[%
	width=\fwidthcompat,
	height=\fheightcompat,
	scale only axis,
	xmode=log,
	xmin=\sigmamincompat,
	xmax=1,
	xminorticks=true,
	ymin=0,
	ymax=1.1,
]

\addplot[color=mycolor2] table [x index=0, y index=1] {data/power_prob1_a2_t51_phi5_u1_l20_k7_unreg.dat}; 
\addplot[color=mycolor3] table [x index=0, y index=2] {data/power_prob1_a2_t51_phi5_u1_l20_k7_unreg.dat}; 
\addplot[color=mycolor1] table [x index=0, y index=1] {data/power_prob1_a2_t51_phi5_u1_l20_k7_M100.dat}; 

\end{axis}

\end{tikzpicture}
	\subcaptionbox*{$a = 2$, $l = \frac{5}{128}$, $\lambda = \frac13$,\\ $\smallscalprod{\phi, \udag}{} = 0.034116$}[0.32\textwidth][l]{\begin{tikzpicture}[baseline]

\begin{axis}[%
	width=\fwidthcompat,
	height=\fheightcompat,
	scale only axis,
	xmode=log,
	xmin=\sigmamincompat,
	xmax=1,
	xminorticks=true,
	ymin=0,
	ymax=1.1,
]

\addplot[color=mycolor2] table [x index=0, y index=1] {data/power_prob1_a2_t51_phi5_u1_l20_k13_unreg.dat}; 
\addplot[color=mycolor3] table [x index=0, y index=2] {data/power_prob1_a2_t51_phi5_u1_l20_k13_unreg.dat}; 
\addplot[color=mycolor1] table [x index=0, y index=1] {data/power_prob1_a2_t51_phi5_u1_l20_k13_M100.dat}; 

\end{axis}

\end{tikzpicture}
	\bigskip
	\subcaptionbox*{$a = 2$, $l = \frac{5}{256}$, $\lambda = 1$,\\$\smallscalprod{\phi, \udag}{} = 0.213446$}[0.32\textwidth][l]{\begin{tikzpicture}[baseline]

\begin{axis}[%
	width=\fwidthcompat,
	height=\fheightcompat,
	scale only axis,
	xmode=log,
	xmin=\sigmamincompat,
	xmax=1,
	xminorticks=true,
	ymin=0,
	ymax=1.1,
]

\addplot[color=mycolor2] table [x index=0, y index=1] {data/power_prob1_a2_t51_phi5_u1_l10_k0_unreg.dat}; 
\addplot[color=mycolor3] table [x index=0, y index=2] {data/power_prob1_a2_t51_phi5_u1_l10_k0_unreg.dat}; 
\addplot[color=mycolor1] table [x index=0, y index=1] {data/power_prob1_a2_t51_phi5_u1_l10_k0_M100.dat}; 

\end{axis}

\end{tikzpicture}
	\subcaptionbox*{$a = 2$, $l = \frac{5}{256}$, $\lambda = \frac23$,\\$\smallscalprod{\phi, \udag}{} = 0.203752$}[0.32\textwidth][l]{\begin{tikzpicture}[baseline]

\begin{axis}[%
	width=\fwidthcompat,
	height=\fheightcompat,
	scale only axis,
	xmode=log,
	xmin=\sigmamincompat,
	xmax=1,
	xminorticks=true,
	ymin=0,
	ymax=1.1,
]

\addplot[color=mycolor2] table [x index=0, y index=1] {data/power_prob1_a2_t51_phi5_u1_l10_k3_unreg.dat}; 
\addplot[color=mycolor3] table [x index=0, y index=2] {data/power_prob1_a2_t51_phi5_u1_l10_k3_unreg.dat}; 
\addplot[color=mycolor1] table [x index=0, y index=1] {data/power_prob1_a2_t51_phi5_u1_l10_k3_M100.dat}; 

\end{axis}

\end{tikzpicture}
	\subcaptionbox*{$a = 2$, $l = \frac{5}{256}$, $\lambda = \frac13$,\\$\smallscalprod{\phi, \udag}{} = 0.035846$}[0.32\textwidth][l]{\begin{tikzpicture}[baseline]

\begin{axis}[%
	width=\fwidthcompat,
	height=\fheightcompat,
	scale only axis,
	xmode=log,
	xmin=\sigmamincompat,
	xmax=1,
	xminorticks=true,
	ymin=0,
	ymax=1.1,
]

\addplot[color=mycolor2] table [x index=0, y index=1] {data/power_prob1_a2_t51_phi5_u1_l10_k7_unreg.dat}; 
\addplot[color=mycolor3] table [x index=0, y index=2] {data/power_prob1_a2_t51_phi5_u1_l10_k7_unreg.dat}; 
\addplot[color=mycolor1] table [x index=0, y index=1] {data/power_prob1_a2_t51_phi5_u1_l10_k7_M100.dat}; 

\end{axis}

\end{tikzpicture}
	\bigskip
	\subcaptionbox*{$a = 4$, $l = \frac{5}{128}$, $\lambda = 1$, $\smallscalprod{\phi, \udag}{} = 0.137085$}[0.32\textwidth][l]{\begin{tikzpicture}[baseline]

\begin{axis}[%
	width=\fwidthcompat,
	height=\fheightcompat,
	scale only axis,
	xmode=log,
	xmin=\sigmamincompat,
	xmax=1,
	xminorticks=true,
	ymin=0,
	ymax=1.1,
]

\addplot[color=mycolor2] table [x index=0, y index=1] {data/power_prob1_a4_t51_phi9_u1_l20_k0_unreg.dat}; 
\addplot[color=mycolor3] table [x index=0, y index=2] {data/power_prob1_a4_t51_phi9_u1_l20_k0_unreg.dat}; 
\addplot[color=mycolor1] table [x index=0, y index=1] {data/power_prob1_a4_t51_phi9_u1_l20_k0_M100.dat}; 

\end{axis}

\end{tikzpicture}
	\subcaptionbox*{$a = 4$, $l = \frac{5}{128}$, $\lambda = \frac23$,\\$\smallscalprod{\phi, \udag}{} = 0.128367$}[0.32\textwidth][l]{\begin{tikzpicture}[baseline]

\begin{axis}[%
	width=\fwidthcompat,
	height=\fheightcompat,
	scale only axis,
	xmode=log,
	xmin=\sigmamincompat,
	xmax=1,
	xminorticks=true,
	ymin=0,
	ymax=1.1,
]

\addplot[color=mycolor2] table [x index=0, y index=1] {data/power_prob1_a4_t51_phi9_u1_l20_k7_unreg.dat}; 
\addplot[color=mycolor3] table [x index=0, y index=2] {data/power_prob1_a4_t51_phi9_u1_l20_k7_unreg.dat}; 
\addplot[color=mycolor1] table [x index=0, y index=1] {data/power_prob1_a4_t51_phi9_u1_l20_k7_M100.dat}; 

\end{axis}

\end{tikzpicture}
	\subcaptionbox*{$a = 4$, $l = \frac{5}{128}$, $\lambda = \frac13$,\\$\smallscalprod{\phi, \udag}{} = 0.019479$}[0.32\textwidth][l]{\begin{tikzpicture}[baseline]

\begin{axis}[%
	width=\fwidthcompat,
	height=\fheightcompat,
	scale only axis,
	xmode=log,
	xmin=\sigmamincompat,
	xmax=1,
	xminorticks=true,
	ymin=0,
	ymax=1.1,
]

\addplot[color=mycolor2] table [x index=0, y index=1] {data/power_prob1_a4_t51_phi9_u1_l20_k13_unreg.dat}; 
\addplot[color=mycolor3] table [x index=0, y index=2] {data/power_prob1_a4_t51_phi9_u1_l20_k13_unreg.dat}; 
\addplot[color=mycolor1] table [x index=0, y index=1] {data/power_prob1_a4_t51_phi9_u1_l20_k13_M100.dat}; 

\end{axis}

\end{tikzpicture}
	\caption{Exact powers of the unregularized test (\ref{plot:unreg}), the oracle test (\ref{plot:oracle}), and empirical power of the adaptive test for $\calZ = H^{0.51}$ (\ref{plot:adaptive_Ht}) from $100$ samples against the noise level $\sigma$ in the compatible nonsmooth scenario (S2) \rev{for $a \in \{2,4\}$, $l \in \{\frac{5}{128},\frac{5}{256}\}$, and $\lambda \in \left\{\frac13,\frac23,1\right\}$. The top row shows the results for $a = 2$ and $l = \frac{5}{128}$, the middle row the results for $a = 2$ and $l = \frac{5}{256}$, and the bottom row the results for $a = 2$ and $l = \frac{5}{128}$.}}
	\label{fig:compat_nonsmooth}
\end{figure}
The findings are similar to those of the compatible smooth scenario. The optimal test $\Psi_{\Phi^\dagger, c^\dagger}$ is always superior compared to the unregularized test, which itself is slightly better than the adaptive test $\Psi^*$.
\rev{However, the decreased smoothness of $\udag$ and, consequently, the increased feature size $\smallscalprod{\phi,\udag}{}$ lead to an improved power of the oracle and the adaptive test compared to scenario (S1).}

\subsubsection{Incompatible smooth scenario (S3)}

The results for the incompatible smooth scenario (S3) are depicted in Figure \ref{fig:incomp_smooth}. Compared to the previous situations, we also investigate the choice of the smoothness parameter $t$ in the space $\calZ = H^t \left(-1,1\right)$. Recall, that our theoretical results are valid only for $t > \frac12$, and note that the unregularized test is also shown for comparison where the equation $T^*\Phi_0 = \varphi_l^{(1,1)}$ is solved numerically despite the formal non-existence of a solution in $L^2(-1,1)$.
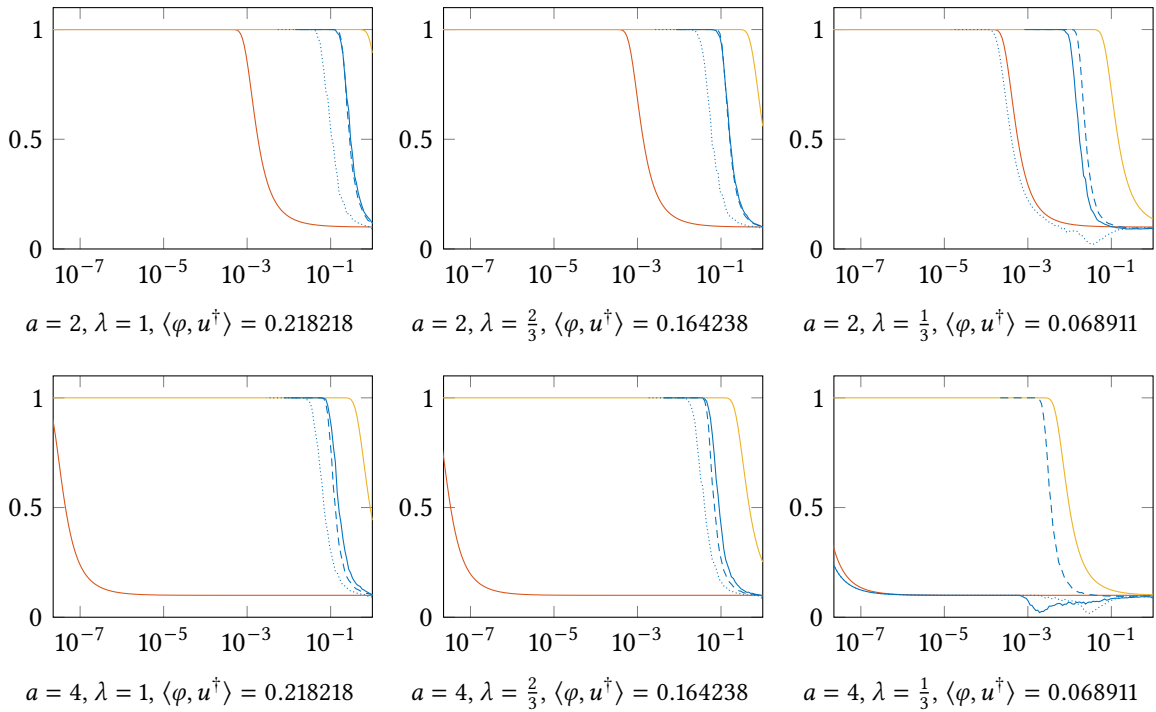
\begin{figure}[!htb]
	\centering
	\subcaptionbox*{$a = 2$, $\lambda = 1$, $\smallscalprod{\phi, \udag}{} = 0.218218$}[0.32\textwidth][l]{\begin{tikzpicture}[baseline]

\begin{axis}[%
	width=\fwidthincompat,
	height=\fheightincompat,
	scale only axis,
	xmode=log,
	xmin=\sigmaminincompat,
	xmax=1,
	xminorticks=true,
	ymin=0,
	ymax=1.1,
]
\addplot[color=mycolor2] table [x index=0, y index=1] {data/power_prob1_a2_t51_phi1_u2_l20_k0_unreg.dat}; 
\label{plot:unreg}
\addplot[color=mycolor3] table [x index=0, y index=2] {data/power_prob1_a2_t51_phi1_u2_l20_k0_unreg.dat};
\label{plot:oracle}
\addplot[color=mycolor1,densely dashed] table [x index=0, y index=1] {data/power_prob1_a2_t0_phi1_u2_l20_k0_M100.dat}; 
\label{plot:adaptive_L2}
\addplot[color=mycolor1] table [x index=0, y index=1] {data/power_prob1_a2_t51_phi1_u2_l20_k0_M100.dat};
\label{plot:adaptive_Ht}
\addplot[color=mycolor1,densely dotted] table [x index=0, y index=1] {data/power_prob1_a2_t100_phi1_u2_l20_k0_M100.dat};
\label{plot:adaptive_H1}

\end{axis}

\end{tikzpicture}
	\subcaptionbox*{$a = 2$, $\lambda = \frac23$, $\smallscalprod{\phi, \udag}{} = 0.164238$}[0.32\textwidth][l]{\begin{tikzpicture}[baseline]

\begin{axis}[%
	width=\fwidthincompat,
	height=\fheightincompat,
	scale only axis,
	xmode=log,
	xmin=\sigmaminincompat,
	xmax=1,
	xminorticks=true,
	ymin=0,
	ymax=1.1,
]

\addplot[color=mycolor2] table [x index=0, y index=1] {data/power_prob1_a2_t51_phi1_u2_l20_k7_unreg.dat}; 
\addplot[color=mycolor3] table [x index=0, y index=2] {data/power_prob1_a2_t51_phi1_u2_l20_k7_unreg.dat};
\addplot[color=mycolor1,densely dashed] table [x index=0, y index=1] {data/power_prob1_a2_t0_phi1_u2_l20_k7_M100.dat}; 
\addplot[color=mycolor1] table [x index=0, y index=1] {data/power_prob1_a2_t51_phi1_u2_l20_k7_M100.dat};
\addplot[color=mycolor1,densely dotted] table [x index=0, y index=1] {data/power_prob1_a2_t100_phi1_u2_l20_k7_M100.dat};

\end{axis}

\end{tikzpicture}
	\subcaptionbox*{$a = 2$, $\lambda = \frac13$, $\smallscalprod{\phi, \udag}{} = 0.068911$}[0.32\textwidth][l]{\begin{tikzpicture}[baseline]

\begin{axis}[%
	width=\fwidthincompat,
	height=\fheightincompat,
	scale only axis,
	xmode=log,
	xmin=\sigmaminincompat,
	xmax=1,
	xminorticks=true,
	ymin=0,
	ymax=1.1,
]

\addplot[color=mycolor2] table [x index=0, y index=1] {data/power_prob1_a2_t51_phi1_u2_l20_k13_unreg.dat}; 
\addplot[color=mycolor3] table [x index=0, y index=2] {data/power_prob1_a2_t51_phi1_u2_l20_k13_unreg.dat};
\addplot[color=mycolor1,densely dashed] table [x index=0, y index=1] {data/power_prob1_a2_t0_phi1_u2_l20_k13_M100.dat}; 
\addplot[color=mycolor1] table [x index=0, y index=1] {data/power_prob1_a2_t51_phi1_u2_l20_k13_M100.dat};
\addplot[color=mycolor1,densely dotted] table [x index=0, y index=1] {data/power_prob1_a2_t100_phi1_u2_l20_k13_M100.dat};

\end{axis}

\end{tikzpicture}
	\bigskip
	\subcaptionbox*{$a = 4$, $\lambda = 1$, $\smallscalprod{\phi, \udag}{} = 0.218218$}[0.32\textwidth][l]{\begin{tikzpicture}[baseline]

\begin{axis}[%
	width=\fwidthincompat,
	height=\fheightincompat,
	scale only axis,
	xmode=log,
	xmin=\sigmaminincompat,
	xmax=1,
	xminorticks=true,
	ymin=0,
	ymax=1.1,
]
\addplot[color=mycolor2] table [x index=0, y index=1] {data/power_prob1_a4_t51_phi1_u2_l20_k0_unreg.dat}; 
\addplot[color=mycolor3] table [x index=0, y index=2] {data/power_prob1_a4_t51_phi1_u2_l20_k0_unreg.dat};
\addplot[color=mycolor1,densely dashed] table [x index=0, y index=1] {data/power_prob1_a4_t0_phi1_u2_l20_k0_M100.dat}; 
\addplot[color=mycolor1] table [x index=0, y index=1] {data/power_prob1_a4_t51_phi1_u2_l20_k0_M100.dat};
\addplot[color=mycolor1,densely dotted] table [x index=0, y index=1] {data/power_prob1_a4_t100_phi1_u2_l20_k0_M100.dat};

\end{axis}

\end{tikzpicture}
	\subcaptionbox*{$a = 4$, $\lambda = \frac23$, $\smallscalprod{\phi, \udag}{} = 0.164238$}[0.32\textwidth][l]{\begin{tikzpicture}[baseline]

\begin{axis}[%
	width=\fwidthincompat,
	height=\fheightincompat,
	scale only axis,
	xmode=log,
	xmin=\sigmaminincompat,
	xmax=1,
	xminorticks=true,
	ymin=0,
	ymax=1.1,
]

\addplot[color=mycolor2] table [x index=0, y index=1] {data/power_prob1_a4_t51_phi1_u2_l20_k7_unreg.dat}; 
\addplot[color=mycolor3] table [x index=0, y index=2] {data/power_prob1_a4_t51_phi1_u2_l20_k7_unreg.dat};
\addplot[color=mycolor1,densely dashed] table [x index=0, y index=1] {data/power_prob1_a4_t0_phi1_u2_l20_k7_M100.dat}; 
\addplot[color=mycolor1] table [x index=0, y index=1] {data/power_prob1_a4_t51_phi1_u2_l20_k7_M100.dat};
\addplot[color=mycolor1,densely dotted] table [x index=0, y index=1] {data/power_prob1_a4_t100_phi1_u2_l20_k7_M100.dat};

\end{axis}

\end{tikzpicture}
	\subcaptionbox*{$a = 4$, $\lambda = \frac13$, $\smallscalprod{\phi, \udag}{} = 0.068911$}[0.32\textwidth][l]{\begin{tikzpicture}[baseline]

\begin{axis}[%
	width=\fwidthincompat,
	height=\fheightincompat,
	scale only axis,
	xmode=log,
	xmin=\sigmaminincompat,
	xmax=1,
	xminorticks=true,
	ymin=0,
	ymax=1.1,
]

\addplot[color=mycolor2] table [x index=0, y index=1] {data/power_prob1_a4_t51_phi1_u2_l20_k13_unreg.dat}; 
\addplot[color=mycolor3] table [x index=0, y index=2] {data/power_prob1_a4_t51_phi1_u2_l20_k13_unreg.dat};
\addplot[color=mycolor1,densely dashed] table [x index=0, y index=1] {data/power_prob1_a4_t0_phi1_u2_l20_k13_M100.dat}; 
\addplot[color=mycolor1] table [x index=0, y index=1] {data/power_prob1_a4_t51_phi1_u2_l20_k13_M100.dat};
\addplot[color=mycolor1,densely dotted] table [x index=0, y index=1] {data/power_prob1_a4_t100_phi1_u2_l20_k13_M100.dat};

\end{axis}

\end{tikzpicture}
	\caption{Exact powers of the unregularized test (\ref{plot:unreg}), the oracle test (\ref{plot:oracle}), and empirical power of the adaptive test for $\calZ = L^2$ (\ref{plot:adaptive_L2}), $\calZ = H^{0.51}$ (\ref{plot:adaptive_Ht}), and $\calZ = H^1$ (\ref{plot:adaptive_H1}) from $100$ samples against the noise level $\sigma$ in the incompatible smooth scenario (S3) for \rev{$a \in \{2,4\}$, }$l = 5/128$, and $\lambda \in \left\{\frac13,\frac23,1\right\}$.}
	\label{fig:incomp_smooth}
\end{figure}
We find that the unregularized test in fact suffers severely from the issue (I1) raised in the introduction, whereas all regularized tests show a way superior power. As an example, it can be read off of the plots that the power of $\Psi_0$ increases to roughly $50\%$ at a noise level which is smaller by $2$--$3$ orders of magnitude compared to the regularized tests. 
\rev{For a higher level $a = 4$ of ill-posedness, this effectt is even more severe, revealing that the unregularized test is no longer useful, whereas the oracle and the adaptive test show a slightly worse, but still very good performance compared to the case $a = 2$.}
Once again, the optimal test $\Psi_{\Phi^\dagger, c^\dagger}$ displays the best power, and all adaptive tests pay a certain price for not knowing $\udag$. It is slightly surprising that the test for $t = 0.51$ is the best out of the three considered adaptive tests, and it has even a better power than the plain $L^2$-test ($t = 0$). One interpretation of this result is that choosing $\calZ$ as a Sobolev space in fact stabilizes the minimization of the functional $\hat J_Y^{\calZ}$, but if $t$ is chosen too large, this necessarily leads to a smoother, and hence potentially sub-optimal, choice of $\Phi$.

Overall it can be said that regularized testing in fact resolves both issues (I1) and (I2) raised in the introduction. Especially in the incompatible smooth scenario (S3), which is maybe closest to practical applications where smoothness of the feature functional $\varphi \in \X$ seems artificial, the improvement in power is outstanding.

To complete the picture, we have also investigated the performance of our minimization algorithm for $\hat J_Y^{\calZ}$. According to our theory, it might be possible that --- depending on the realization $Y$ of the data --- no minimizer exists.
\rev{However, we have found that a minimizer of $\hat J_Y^{\calZ}$ does (nearly) always exist for the noise levels considered in this study.}

\section{Conclusion and outlook}
\label{sec:conclusion}

We have seen that for linear functionals $\phi \in \overline{\ran T^*}$, an optimal probe functional to test the feature $\langle\phi,\udag\rangle$ using a priori information about the truth $\udag$ exists, and can be characterized as minimizer of the objective functional $J_{T\udag}^\Y$. This optimal regularized test is always superior in terms of its power compared to the unregularized test with the same level.

An adaptive test for such a feature can be constructed by solving a constrained convex optimization problem. We have shown that a solution for this problem exists with a positive probability and have given a lower bound for the power of the adaptive test.

In numerical simulations of a deconvolution problem, we have observed that the construction of adaptive regularized tests using the optimization approach overcomes both issues stated in the beginning: It extends the class of features that can be tested and has a regularizing effect in the choice of the probe functional.
If the problem is sufficiently ill-posed, the adaptive test allows feature testing with a reasonable power in noise regimes where the unregularized test has no power.

Future research may look into ways to avoid the necessity of the optimization approach for two independent data samples, which may not be available in practice.

\rev{This work poses several open questions for future research. Despite the fact that any discussed plug-in test as in \cref{reg_plug_in} will never have better power than the optimal test $\Psi_{\Phi^\dagger, c^*}$, those tests might be of interest in applications because only the regularization parameter $\beta > 0$ (and not an infinite-dimensional probe element $\Phi \in \Y$) needs to be chosen. However, if this is done based on the data $Y$, it is again not clear whether the level $\alpha$ is sustained. It is an interesting question how to design adaptive plug-in tests and to investigate how they perform in practice.}

\rev{Future research should also look into ways to avoid the necessity of two independent data samples, which may not be available in practice. Potential approaches might lie in adaptive plug-in tests as discussed above or Bayesian approaches to testing.}

\section*{Acknowledgments} 

We want to thank the anonymous reviewer and editors for their valuable comments and suggestions.
Remo Kretschmann and Frank Werner are supported by the German Research Foundation (DFG) under the grant WE 6204/2-1.


\appendix

\section{Implementation of convolution}
\label{sec:impl_conv}

\rev{Here, we consider the case of general dimension $d \in \N$. Let $T$ be the convolution operator associated with a kernel $h \in L^1(\bbR^d)$,}
\begin{equation}
	\label{eq:def_conv_op}
	(Tu)(x) := (h * u)(x) = \int_{\bbR^d} h(x - z) u(z) \di z \quad \text{for all}~x \in \bbR^d~\text{and}~u \in L^2(\bbR^d).
\end{equation}
We assume that the kernel $h$ is given in terms of its Fourier transform
\begin{equation*}
	(\Fourier h)(\xi) = \int_{\bbR^d} h(x) \exp \left(2\pi i\scalprod{x,\xi}{}\right) \di x \quad \text{for all}~\xi \in \bbR^d.
\end{equation*}
We approximate $h * u$ by the \emph{periodic convolution}
\[ \left(\widetilde{T}\tilde{u}\right)(x) := (\tilde{h} *_P \tilde{u})(x) = \int_{B^\infty(P/2)} \tilde{h}(x - z)\tilde{u}(z) \di z \]
between the $P$-\emph{periodization} $\tilde{h}$ of $h$,
\begin{equation}
	\label{def_periodization}
	\tilde{h}(x) := h_{\text{per},P}(x) := \sum_{l \in \bbZ^d} h(x + lP) \quad \text{for all}~x \in \bbR^d,
\end{equation}
and the $P$-periodization $\tilde{u} := u_{\text{per},P}$ of $u$, where we assume that the series in \eqref{def_periodization} converges uniformly absolutely.
\rev{We assume that}
\[ \supp u \subseteq \overline{B^\infty(P_u/2)} \]
for some $P_u > 0$, where
\[ B^\infty(r) := \left\{ x \in \bbR^d: \norm{x}{\infty} < r \right\} \]
and that only the values of $h * u$ in $B^\infty(P_y/2)$, $P_y > 0$, are accessible. In this case, we periodize with period $P \ge P_u + P_y$. \rev{This is motivated by the following observation.}

\begin{lem}
	\label{per_conv_exact}
	If $\supp u \subseteq B^\infty(P_u/2)$ and $P \ge P_u + P_y$, then
	\[ \tilde{h} * u = \tilde{h} *_P \tilde{u} \quad \text{on}~B^\infty(P_y/2), \]
	where $\tilde{h} = h_{\text{per},P}$ and $\tilde{u} = u_{\text{per},P}$.
\end{lem}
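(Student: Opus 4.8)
The plan is to show that the assumption on $\supp u$ forces the $P$-periodization $\tilde u$ to coincide with $u$ on the fundamental cell $B^\infty(P/2)$, after which the periodic convolution $\tilde h *_P \tilde u$ collapses into the ordinary convolution $\tilde h * u$ directly from the definitions. First I would record the integrability facts that make every integral meaningful. Since $h \in L^1(\bbR^d)$ and the series in \eqref{def_periodization} converges uniformly absolutely, Tonelli's theorem gives $\int_{B^\infty(P/2)}\abs{\tilde h} \le \sum_{l\in\bbZ^d}\int_{B^\infty(P/2)}\abs{h(\cdot + lP)} = \smallnorm{h}{L^1(\bbR^d)} < \infty$, so $\tilde h$ is a $P$-periodic function in $L^1_{\mathrm{loc}}(\bbR^d)$; moreover $u \in L^2(\bbR^d)$ with compact support lies in $L^1(\bbR^d)$. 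A routine Fubini argument then shows that both $(\tilde h * u)(x) = \int_{\bbR^d}\tilde h(x-z)u(z)\di z$ and $(\tilde h *_P \tilde u)(x) = \int_{B^\infty(P/2)}\tilde h(x-z)\tilde u(z)\di z$ are well-defined as $L^1_{\mathrm{loc}}$ functions, hence for a.e.\ $x$.

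The geometric heart of the argument is next. From $P \ge P_u + P_y$ and $P_y > 0$ we get the strict inequality $P > P_u$, and in particular $B^\infty(P_u/2) \subseteq B^\infty(P/2)$ and $B^\infty(P_y/2) \subseteq B^\infty(P/2)$. For $z \in B^\infty(P/2)$ and $l \in \bbZ^d \setminus \{0\}$ the reverse triangle inequality yields $\norm{z + lP}{\infty} \ge \norm{lP}{\infty} - \norm{z}{\infty} > P - P/2 = P/2 > P_u/2$, so $z + lP \notin B^\infty(P_u/2) \supseteq \supp u$ and hence $u(z + lP) = 0$. Therefore $\tilde u(z) = \sum_{l\in\bbZ^d} u(z + lP) = u(z)$ for a.e.\ $z \in B^\infty(P/2)$.

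Finally I would chain the identities: for any $x \in B^\infty(P_y/2)$ (in fact for any $x \in \bbR^d$),
\begin{align*}
(\tilde h *_P \tilde u)(x) &= \int_{B^\infty(P/2)}\tilde h(x-z)\tilde u(z)\di z = \int_{B^\infty(P/2)}\tilde h(x-z)u(z)\di z \\
&= \int_{\bbR^d}\tilde h(x-z)u(z)\di z = (\tilde h * u)(x),
\end{align*}
where the second equality uses $\tilde u = u$ a.e.\ on $B^\infty(P/2)$ and the third uses $\supp u \subseteq B^\infty(P_u/2) \subseteq B^\infty(P/2)$. I do not expect a genuine obstacle here: the only care needed is the measure-theoretic bookkeeping (Tonelli for the periodization, finiteness of the convolution integrals from the compact support of $u$) and keeping the ball inclusions consistent; the strict inequality $P > P_u$, which is exactly where $P_y > 0$ enters, is what pushes every nonzero translate of $u$ off the fundamental cell. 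As a remark I would note that substituting $w = z + lP$ termwise and using the $P$-periodicity of $\tilde h$ together with the fact that $\{B^\infty(P/2) + lP\}_{l\in\bbZ^d}$ tiles $\bbR^d$ proves the identity on all of $\bbR^d$ with no hypothesis on $\supp u$, so the assumptions of the lemma are more than sufficient.
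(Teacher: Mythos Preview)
Your proof is correct and follows essentially the same route as the paper: both arguments hinge on the observation that $\tilde u = u$ on the fundamental cell $B^\infty(P/2)$ (the paper phrases this as $\supp \tilde u \cap B^\infty(P/2) = \supp u$), after which the chain of equalities between $\tilde h * u$ and $\tilde h *_P \tilde u$ is immediate. You run the chain in the opposite direction and supply more measure-theoretic justification than the paper does, and your closing remark---that the identity actually holds on all of $\bbR^d$ without any support hypothesis on $u$, via the tiling substitution $w = z + lP$ and the $P$-periodicity of $\tilde h$---is a correct strengthening that the paper does not mention.
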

\begin{proof}
	We obtain
	\begin{align*}
		(\tilde{h} * u)(x) &= \int_{\bbR^2} \tilde{h}(x - z)u(z) \di z
		= \int_{B^\infty(P_u/2)} \tilde{h}(x - z)u(z)\di z \\
		&= \int_{B^\infty(P_u/2)} \tilde{h}(x - z)\tilde{u}(z)\di z
		= \int_{B^\infty(P/2)} \tilde{h}(x - z)\tilde{u}(z)\di z
		= (\tilde{h} *_P \tilde{u})(x)
	\end{align*}
	for all $x \in B^\infty(P_y/2)$. Here, we used that $\supp \tilde{u} \cap B^\infty(P/2) = \supp u \subseteq B^\infty(P_u/2)$.
\end{proof}
Note that $\tilde{h} * u$ and $\tilde{h} *_P \tilde{u}$ do, in general, not agree outside of $B^\infty(P_y/2)$.
\rev{We discretize the problem using the grid $\frac{P}{N}\bbZ_N^d$, where
\[ \bbZ_N := \left\{-\frac{N}{2}, -\frac{N}{2} + 1, \dots, \frac{N}{2} - 1\right\}. \]}
Let $\underline{\tilde{u}}_N$ denote the $N$-periodic sequence
\[ \left(\underline{\tilde{u}}_N\right)_k := \tilde{u}\left(\frac{P}{N}k\right) \quad \text{for all}~k \in \bbZ^d, \]
and $\underline{h} *_N \underline{u}$ the \emph{discrete convolution}
\[ \left(\underline{h} *_N \underline{u}\right)(l) = \sum_{j \in \bbZ_N^d} \underline{h}(l - j) \underline{u}(j), \quad l \in \bbZ^d, \]
between two $N$-periodic sequences $\underline{h}$ and $\underline{u}$.
\rev{We approximate $\underline{\widetilde{T}\tilde{u}}_N = \underline{\tilde{h} *_P \tilde{u}}_N$ by the discrete convolution operator}
\[ T_N \underline{\tilde{u}}_N := P^d\left(\underline{\bar{h}}_N *_N \underline{\tilde{u}}_N\right), \]
where $\bar{h} := P_{N,P}\tilde{h}$ denotes the $L^2$-orthogonal projection of $\tilde{h}$ onto
\[ \mathcal{T}_{N,P} := \spn \left\{\exp \left(\frac{2\pi i}{P}\scalprod{j,\cdot}{}\right): j \in \bbZ_N^d\right\}. \]
We express the discrete convolution as
\begin{equation}
	\label{disc_Four_conv}
	\underline{\bar{h}}_N *_N \underline{\tilde{u}}_N = \DFT_N^{-1} \left(\DFT_N \left(\underline{\bar{h}}_N *_N \underline{\tilde{u}}_N\right)\right) = \text{DFT}_N^{-1} \left(\DFT_N \underline{\bar{h}}_N \DFT_N \underline{\tilde{u}}_N\right)
\end{equation}
using the Fourier convolution theorem, where
\[ \left(\DFT_N \underline{\bar{h}}_N\right)_k = \sum_{j \in \bbZ_N^d} \exp\left(-\frac{2\pi i}{N} \scalprod{k,j}{}\right) (\underline{\bar{h}}_N)_j \quad \text{for all}~k \in \bbZ^d. \]
\rev{We compute the discrete Fourier transform of $\underline{\bar{h}}_N$ analytically.}
\begin{lemma}
	\label{expr_Four_kern}
	\rev{If the series in \eqref{def_periodization} converges uniformly absolutely, then}
	\begin{equation}
		\label{disc_Four_kern}
		(\DFT_N \underline{\bar{h}}_N)_k = \left(\frac{N}{P}\right)^d (\Fourier h)\left(\frac{k}{P}\right) \quad \text{for all}~k \in \bbZ_N^d.
	\end{equation}
\end{lemma}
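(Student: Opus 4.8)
The plan is to unwind the definition of $\underline{\bar h}_N$, identify the Fourier coefficients of the periodization $\tilde h$ through Poisson's summation formula, and then collapse the resulting finite exponential sums by orthogonality. First, since $h \in L^1(\bbR^d)$ and the series in \eqref{def_periodization} converges uniformly absolutely, $\tilde h = h_{\text{per},P}$ is a well-defined $P$-periodic function, and its $m$-th periodic Fourier coefficient is obtained by exchanging the (absolutely convergent) sum with the integral and then unfolding to $\bbR^d$, using the $P$-periodicity of $x \mapsto \exp(-\tfrac{2\pi i}{P}\scalprod{m,x}{})$:
\[ \Fper\tilde h(m) \;=\; P^{-d}\int_{B^\infty(P/2)} \sum_{l\in\bbZ^d} h(x + lP)\,\exp\!\left(-\frac{2\pi i}{P}\scalprod{m,x}{}\right)\di x \;=\; P^{-d}(\Fourier h)\!\left(\frac{m}{P}\right) \]
for every $m \in \bbZ^d$; this is Poisson's summation formula in the normalization used here.

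Next I would use that the system $\{P^{-d/2}\exp(\tfrac{2\pi i}{P}\scalprod{j,\cdot}{})\}_{j\in\bbZ_N^d}$ is orthonormal in $L^2(B^\infty(P/2))$, so that the $L^2$-orthogonal projection $\bar h = P_{N,P}\tilde h$ onto $\mathcal{T}_{N,P}$ is precisely the truncated Fourier series
\[ \bar h \;=\; \sum_{j\in\bbZ_N^d}\Fper\tilde h(j)\,\exp\!\left(\frac{2\pi i}{P}\scalprod{j,\cdot}{}\right) \;=\; P^{-d}\sum_{j\in\bbZ_N^d}(\Fourier h)\!\left(\frac{j}{P}\right)\exp\!\left(\frac{2\pi i}{P}\scalprod{j,\cdot}{}\right). \]
As $\bar h$ is a trigonometric polynomial, it is continuous, so sampling it on the grid $\tfrac{P}{N}\bbZ_N^d$ is meaningful and yields the $N$-periodic sequence
\[ \left(\underline{\bar h}_N\right)_l \;=\; \bar h\!\left(\frac{P}{N}l\right) \;=\; P^{-d}\sum_{j\in\bbZ_N^d}(\Fourier h)\!\left(\frac{j}{P}\right)\exp\!\left(\frac{2\pi i}{N}\scalprod{j,l}{}\right), \qquad l \in \bbZ^d. \]

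Finally I would apply the discrete Fourier transform, interchange the two finite sums, and evaluate the inner one:
\[ \left(\DFT_N\underline{\bar h}_N\right)_k \;=\; \sum_{l\in\bbZ_N^d}\exp\!\left(-\frac{2\pi i}{N}\scalprod{k,l}{}\right)\left(\underline{\bar h}_N\right)_l \;=\; P^{-d}\sum_{j\in\bbZ_N^d}(\Fourier h)\!\left(\frac{j}{P}\right)\sum_{l\in\bbZ_N^d}\exp\!\left(\frac{2\pi i}{N}\scalprod{j-k,l}{}\right). \]
The inner sum factorizes over the $d$ coordinates and equals $N^d$ when $j \equiv k \pmod N$ and $0$ otherwise; for $j,k \in \bbZ_N^d$ this congruence forces $j = k$. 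Hence only the term $j = k$ survives, giving $\left(\DFT_N\underline{\bar h}_N\right)_k = N^d P^{-d}(\Fourier h)(k/P) = (N/P)^d(\Fourier h)(k/P)$ for all $k \in \bbZ_N^d$, which is \eqref{disc_Four_kern}.

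There is no genuine difficulty in this argument; the two points that require care are (i) the justification of Poisson's summation formula, which is exactly where the hypothesis of uniform absolute convergence of the periodization enters — it guarantees that $\tilde h$ is an honest function and that the interchange of summation and integration is legitimate — and (ii) the identification of the $L^2$-projection onto $\mathcal{T}_{N,P}$ with the truncation of the Fourier series of $\tilde h$, after which the statement reduces to the orthogonality relation $\sum_{l\in\bbZ_N^d}\exp(\tfrac{2\pi i}{N}\scalprod{j-k,l}{}) = N^d\,\mathbf{1}_{\{j \equiv k\,(\mathrm{mod}\,N)\}}$. One should keep the sign conventions relating $\Fper$ and $\Fourier$ consistent throughout; since $\Fourier h$ in \eqref{eq:def_h} is even, this is ultimately immaterial.
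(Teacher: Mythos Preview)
Your proof is correct and uses the same two ingredients as the paper --- Poisson's summation formula for the Fourier coefficients of the periodization, and the fact that $\bar h \in \mathcal{T}_{N,P}$ has only finitely many nonzero Fourier modes --- but packages the final step differently. The paper invokes the aliasing formula $(\DFT_N \underline{\bar h}_N)_k = N^d \sum_{l\in\bbZ^d} \widehat{\bar h}(k+Nl)$ as a black box and then observes that $\widehat{\bar h}(k+Nl)$ vanishes for $l\neq 0$ when $k\in\bbZ_N^d$; you instead write out the truncated Fourier expansion of $\bar h$ explicitly, sample it, and collapse the resulting double sum via the elementary orthogonality relation for roots of unity. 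Your route is more self-contained (it derives the relevant instance of the aliasing formula in passing), while the paper's is shorter by appeal to a standard result. Your remark about the sign conventions between $\Fper$ and $\Fourier$ is apt --- the paper silently relies on the evenness of $\Fourier h$ here as well.
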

\begin{proof}
By the aliasing formula [Theorem 4.67, Plonka et al 2018], we now have
\[ (\DFT_N \underline{\bar{h}}_N)_k = N^d \sum_{l \in \bbZ^d} \widehat{\bar{h}}(k + Nl) \quad \text{for all}~k \in \bbZ^d. \]
By Poisson's summation formula, which holds due to the uniform absolute convergence of the series in \eqref{def_periodization}, we moreover have
\begin{equation*}
	\widehat{\bar{h}}(k) = \widehat{P_{P,N}h_{\text{per},P}}(k) = \widehat{h_{\text{per},P}}(k) = P^{-d}(\Fourier h)\left(\frac{k}{P}\right) \quad \text{for all}~k \in \bbZ_N^d,
\end{equation*}
and $\widehat{\bar{h}}(k) = \widehat{P_{P,N}h_{\text{per},P}}(k) = 0$ for $k \in \bbZ^d \setminus \bbZ_N^d$. \end{proof}

Now, we implement the computation of the discrete convolution operator $T_N$ using \eqref{disc_Four_conv}, \eqref{disc_Four_kern}, and a fast Fourier transform.
\rev{The following lemma shows that $T_N\underline{\tilde{u}}_N$ is in fact a discretization of the periodic convolution $\bar{h} *_P (Q_{N,P}\tilde{u})$,} where $Q_{N,P}\tilde{u}$ denotes the interpolation of $\tilde{u}$ in the grid points $\frac{P}{N}j$, $j \in \bbZ_N^d$, by a function in $\mathcal{T}_{N,P}$.
\begin{lem}
	We have
	\[ \underline{\bar{h} *_P \left(Q_{N,P}\tilde{u}\right)}_N = P^d \left(\underline{\bar{h}}_N *_N \underline{\tilde{u}}_N\right). \]
\end{lem}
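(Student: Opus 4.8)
The plan is to verify the identity frequency by frequency, working with the discrete Fourier transform $\DFT_N$ and the periodic Fourier transform $\Fper$ on the torus of side length $P$. The key observation is that \emph{every} function appearing on the left-hand side --- $\bar h$, $Q_{N,P}\tilde u$, and their periodic convolution --- is a trigonometric polynomial in $\mathcal{T}_{N,P}$, hence band-limited to $\bbZ_N^d$ and free of aliasing. Once this is in place, the statement follows by combining the periodic convolution theorem on $\mathcal{T}_{N,P}$, the aliasing formula already used above, the discrete convolution theorem \eqref{disc_Four_conv}, and the defining interpolation property of $Q_{N,P}$; no approximation is involved, so the identity is exact.

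First I would establish the periodic convolution theorem on $\mathcal{T}_{N,P}$. Writing $e_k := \exp\bigl(\tfrac{2\pi i}{P}\scalprod{k,\cdot}{}\bigr)$ for $k \in \bbZ_N^d$, the orthogonality of the $e_k$ over $B^\infty(P/2)$ gives $e_j *_P e_k = P^d \delta_{jk}\, e_j$. Expanding $\bar h = \sum_{k \in \bbZ_N^d} \Fper\bar h(k)\, e_k$ and $Q_{N,P}\tilde u = \sum_{k \in \bbZ_N^d} \Fper(Q_{N,P}\tilde u)(k)\, e_k$ and using bilinearity of $*_P$ then shows $\bar h *_P (Q_{N,P}\tilde u) \in \mathcal{T}_{N,P}$ with
\[ \Fper\bigl(\bar h *_P (Q_{N,P}\tilde u)\bigr)(k) = P^d\, \Fper\bar h(k)\, \Fper(Q_{N,P}\tilde u)(k), \qquad k \in \bbZ_N^d. \]

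Next I would relate the DFT of the samples of a trigonometric polynomial $f \in \mathcal{T}_{N,P}$ to its Fourier coefficients: by the aliasing formula used in the proof of Lemma~\ref{expr_Four_kern}, $(\DFT_N \underline f_N)_k = N^d \sum_{l \in \bbZ^d} \Fper f(k + Nl)$, and since $f \in \mathcal{T}_{N,P}$ has Fourier coefficients supported in $\bbZ_N^d$, only the term $l = 0$ survives, so $(\DFT_N \underline f_N)_k = N^d \Fper f(k)$ for $k \in \bbZ_N^d$. Applying this to $f = \bar h$, to $f = Q_{N,P}\tilde u$, and to $f = \bar h *_P (Q_{N,P}\tilde u)$, and using the defining property of $Q_{N,P}$ --- namely that $Q_{N,P}\tilde u$ interpolates $\tilde u$ in the grid points $\tfrac{P}{N}j$, so that $\underline{Q_{N,P}\tilde u}_N = \underline{\tilde u}_N$ and hence $\DFT_N \underline{\tilde u}_N = N^d \Fper(Q_{N,P}\tilde u)$ on $\bbZ_N^d$ --- I would substitute into the convolution theorem above to express $\DFT_N \underline{\bar h *_P (Q_{N,P}\tilde u)}_N$ in terms of $(\DFT_N \underline{\bar h}_N)(\DFT_N \underline{\tilde u}_N)$ up to an explicit power of $N$ and $P$. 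By the discrete convolution theorem \eqref{disc_Four_conv} this product equals $\DFT_N(\underline{\bar h}_N *_N \underline{\tilde u}_N)$; applying $\DFT_N^{-1}$ and collecting the normalization factors then yields $\underline{\bar h *_P (Q_{N,P}\tilde u)}_N = P^d (\underline{\bar h}_N *_N \underline{\tilde u}_N)$.

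The only delicate point is the bookkeeping of the $N^d$-factors: one must be consistent about the normalizations chosen for $\DFT_N$ (hence $\DFT_N^{-1}$), for the aliasing formula, for $\Fper$ on the $P$-torus, and for the cyclic convolution $*_N$; once these conventions are fixed exactly as in the paper, the constants combine to the claimed factor $P^d$. Everything else --- the two convolution theorems and the interpolation identity $\underline{Q_{N,P}\tilde u}_N = \underline{\tilde u}_N$ --- is immediate.
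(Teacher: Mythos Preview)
Your approach is essentially the same as the paper's: both proofs work entirely in the Fourier domain, first observing that $\bar h$, $Q_{N,P}\tilde u$, and their periodic convolution all lie in $\mathcal{T}_{N,P}$, then combining the periodic convolution theorem, the identification of $\DFT_N$ of grid samples with periodic Fourier coefficients for trigonometric polynomials, the discrete convolution theorem, and injectivity of $\DFT_N$. The only cosmetic difference is that you derive the sampling identity $(\DFT_N\underline{f}_N)_k = N^d\,\widehat f(k)$ for $f\in\mathcal{T}_{N,P}$ from the aliasing formula used in Lemma~\ref{expr_Four_kern}, whereas the paper states this relation directly; and you use the interpolation property $\underline{Q_{N,P}\tilde u}_N=\underline{\tilde u}_N$ explicitly, while the paper leaves it implicit.
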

\begin{proof}
	For $f \in \mathcal{T}_{N,P}$ we have
	\[ \widehat{f}(k) = \begin{cases}
		(\DFT_N \underline{f}_N)(k) & \text{for}~k \in \bbZ_N^d, \\
		0 & \text{otherwise}.
	\end{cases} \]
	As $Q_{N,P}\tilde{u} \in \mathcal{T}_{N,P}$, it follows from the periodic convolution theorem that
	\[ \widehat{\left(\bar{h} *_P (Q_{N,P})\tilde{u}\right)}(k) = P^d \widehat{\bar{h}}(k) \widehat{Q_{N,P}\tilde{u}}(k) = 0 \quad \text{for all}~k \in \bbZ^d \setminus \bbZ_N^d, \]
	which implies that $\bar{h} *_P (Q_{N,P})\tilde{u} \in \mathcal{T}_{N,P}$ as well.
	Now, the discrete convolution theorem yields
	\begin{equation*}
		\begin{split}
			\DFT_N \left(\underline{\bar{h} *_P \left(Q_{N,P}\tilde{u}\right)}_N\right)(k)
			&= \widehat{\left(\bar{h} *_P (Q_{N,P})\tilde{u}\right)}(k)
			= P^d \widehat{\bar{h}}(k) \widehat{Q_{N,P}\tilde{u}}(k) \\
			&= P^d \left(\DFT_N \underline{\bar{h}}_N\right)(k) \left(\DFT_N \underline{\bar{u}}_N\right)(k) \\
			&= P^d \DFT_N \left(\underline{\bar{h}}_N *_N \underline{\tilde{u}}_N\right)(k)
			\quad \text{for all}~k \in \bbZ_N^d.
		\end{split}
	\end{equation*}
	The statement follows from the injectivity of the discrete Fourier transform.
\end{proof}

\rev{Moreover, the following identity holds.}
\begin{lem}
	\label{proj_kern}
	We have
	\[ \tilde{h} *_P (Q_{N,P}\tilde{u}) = \bar{h} *_P (Q_{N,P}\tilde{u}), \]
	where $\bar{h} := P_{N,P}\tilde{h}$.
\end{lem}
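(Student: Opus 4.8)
The plan is to compare the periodic Fourier coefficients of the two sides and then invoke injectivity of the periodic Fourier transform. The crucial observation is that $Q_{N,P}\tilde u \in \mathcal{T}_{N,P}$ by construction, so that $\widehat{Q_{N,P}\tilde u}(k) = 0$ for every $k \in \bbZ^d \setminus \bbZ_N^d$. Since $\tilde h = h_{\text{per},P} \in L^1(-P/2,P/2)$ (the periodization series converges uniformly absolutely by assumption) and $Q_{N,P}\tilde u$ is a trigonometric polynomial, both periodic convolutions $\tilde h *_P (Q_{N,P}\tilde u)$ and $\bar h *_P (Q_{N,P}\tilde u)$ are well defined, and the periodic convolution theorem gives
\[
\widehat{\left(\tilde h *_P (Q_{N,P}\tilde u)\right)}(k) = P^d\,\widehat{\tilde h}(k)\,\widehat{Q_{N,P}\tilde u}(k), \qquad
\widehat{\left(\bar h *_P (Q_{N,P}\tilde u)\right)}(k) = P^d\,\widehat{\bar h}(k)\,\widehat{Q_{N,P}\tilde u}(k)
\]
for all $k \in \bbZ^d$.

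Next I would use that $\bar h = P_{N,P}\tilde h$ is the $L^2$-orthogonal projection of $\tilde h$ onto $\mathcal{T}_{N,P} = \spn\{\exp(\tfrac{2\pi i}{P}\scalprod{j,\cdot}{}) : j \in \bbZ_N^d\}$, which --- exactly as in the proof of \cref{expr_Four_kern} --- means
\[
\widehat{\bar h}(k) = \begin{cases} \widehat{\tilde h}(k), & k \in \bbZ_N^d, \\ 0, & k \in \bbZ^d \setminus \bbZ_N^d. \end{cases}
\]
Combining this with the vanishing of $\widehat{Q_{N,P}\tilde u}$ off $\bbZ_N^d$ shows that for every $k \in \bbZ^d$ the two products above agree: for $k \in \bbZ_N^d$ we have $\widehat{\bar h}(k) = \widehat{\tilde h}(k)$, while for $k \in \bbZ^d \setminus \bbZ_N^d$ both products vanish because $\widehat{Q_{N,P}\tilde u}(k) = 0$.

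Hence $\tilde h *_P (Q_{N,P}\tilde u)$ and $\bar h *_P (Q_{N,P}\tilde u)$ have identical periodic Fourier coefficients, and the claim follows from the injectivity of the periodic Fourier transform on $L^1(-P/2,P/2)$; alternatively, since the convolution-theorem identities already show that both sides lie in $\mathcal{T}_{N,P}$, one may simply match the coefficients of the finitely many exponentials $\exp(\tfrac{2\pi i}{P}\scalprod{j,\cdot}{})$, $j \in \bbZ_N^d$. The only point requiring a little care is the applicability of the periodic convolution theorem, i.e.\ the well-definedness and $L^1$-membership of $\tilde h *_P (Q_{N,P}\tilde u)$ and $\bar h *_P (Q_{N,P}\tilde u)$; this is immediate from $\tilde h, \bar h \in L^1(-P/2,P/2)$ together with the boundedness of the trigonometric polynomial $Q_{N,P}\tilde u$, so I do not anticipate any genuine difficulty.
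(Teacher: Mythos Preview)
Your proof is correct and follows essentially the same approach as the paper: apply the periodic convolution theorem to both sides, use that $\widehat{Q_{N,P}\tilde u}(k)=0$ for $k\in\bbZ^d\setminus\bbZ_N^d$ together with $\widehat{P_{N,P}\tilde h}(k)=\widehat{\tilde h}(k)$ on $\bbZ_N^d$, and conclude by injectivity of the periodic Fourier transform. Your version is slightly more explicit about the case distinction and the well-definedness of the convolutions, but the argument is the same.
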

\begin{proof}
	As $\widehat{Q_{N,P}\tilde{u}}(k) = 0$ for $k \in \bbZ^d \setminus \bbZ_N^d$, we have
	\begin{equation*}
		\begin{split}
			\widehat{\tilde{h} *_P (Q_{N,P}\tilde{u})} &= P^d \widehat{\tilde{h}} \widehat{Q_{N,P}\tilde{u}} = P^d \widehat{P_{N,P}\tilde{h}} \widehat{Q_{N,P}\tilde{u}} = \widehat{(P_{N,P}\tilde{h}) *_P (Q_{N,P}\tilde{u})}
		\end{split}
	\end{equation*}
	by the periodic convolution theorem. Now, the statement follows from the injectivity of the periodic Fourier transform.
\end{proof}

\rev{Now, we bound the remaining interpolation error between $\tilde{h} *_P \tilde{u}$ and $\tilde{h} *_P (Q_{N,P}\tilde{u})$ as well as the periodization error in the following overall error estimate.}
\begin{thm}
	\label{conv_err_est}
	If $u \in H^m(\bbR^d)$, $m \in \N$, $\supp u \subseteq B^\infty(P_u/2)$, and $P \ge P_u + P_y$, then the approximation error is bounded by
	\begin{equation*}
		\begin{split}
			\norm{\bar{h} *_P (Q_{N,P}\tilde{u}) - h * u}{L^2(B^\infty(P_y/2))}
			&\le \norm{h - \tilde{h}}{L^1(B^\infty(P/2))} \norm{u}{L^2(\bbR^d)} \\
			&\quad + \norm{h}{L^1(\bbR^d)} \norm{Q_{N,P} - I}{H^m(B^\infty(P/2)) \to L^2(B^\infty(P/2))} \norm{u}{H^m(\bbR^d)}.
		\end{split}
	\end{equation*}
\end{thm}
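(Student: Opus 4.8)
The plan is to split the error by the triangle inequality into a \emph{periodization} part and a \emph{trigonometric interpolation} part, and to estimate each with an appropriate Young inequality, using the support hypotheses to pass freely between aperiodic and periodic convolutions and between $\bbR^d$ and the fundamental domain $B^\infty(P/2)$. First I would apply \cref{proj_kern} to write $\bar h *_P (Q_{N,P}\tilde u) = \tilde h *_P (Q_{N,P}\tilde u)$, so the quantity of interest is $\tilde h *_P (Q_{N,P}\tilde u) - h * u$. Inserting $\tilde h *_P \tilde u$ and using the triangle inequality in $L^2(B^\infty(P_y/2))$ bounds it by $\norm{\tilde h *_P(Q_{N,P}\tilde u - \tilde u)}{L^2(B^\infty(P_y/2))} + \norm{\tilde h *_P \tilde u - h * u}{L^2(B^\infty(P_y/2))}$, and it remains to bound the two summands by the two terms on the right-hand side of the claimed inequality.

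For the periodization summand, \cref{per_conv_exact} gives $\tilde h *_P \tilde u = \tilde h * u$ (aperiodic convolution) on $B^\infty(P_y/2)$, so this term equals $(\tilde h - h)*u$ there. Since for $x \in B^\infty(P_y/2)$ and $z \in \supp u \subseteq B^\infty(P_u/2)$ one has $x - z \in B^\infty((P_u+P_y)/2) \subseteq B^\infty(P/2)$ by $P \ge P_u + P_y$, the values of $(\tilde h - h)*u$ on $B^\infty(P_y/2)$ coincide with those of $\big((\tilde h - h)\mathbf 1_{B^\infty(P/2)}\big) * u$; applying the classical Young inequality on $\bbR^d$ to this $L^1$-function against $u \in L^2(\bbR^d)$ yields the bound $\norm{h - \tilde h}{L^1(B^\infty(P/2))}\,\norm{u}{L^2(\bbR^d)}$.

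For the interpolation summand, I would first enlarge the domain, $\norm{\,\cdot\,}{L^2(B^\infty(P_y/2))} \le \norm{\,\cdot\,}{L^2(B^\infty(P/2))}$, and then use Young's inequality on the torus $\bbR^d/P\bbZ^d$ to get $\norm{\tilde h *_P(Q_{N,P}\tilde u - \tilde u)}{L^2(B^\infty(P/2))} \le \norm{\tilde h}{L^1(B^\infty(P/2))}\,\norm{Q_{N,P}\tilde u - \tilde u}{L^2(B^\infty(P/2))}$. Subadditivity of $\tilde h = \sum_{l\in\bbZ^d} h(\cdot + lP)$ together with the tiling of $\bbR^d$ by the translates $B^\infty(P/2) + lP$ gives $\norm{\tilde h}{L^1(B^\infty(P/2))} \le \norm{h}{L^1(\bbR^d)}$. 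Finally, $\norm{Q_{N,P}\tilde u - \tilde u}{L^2(B^\infty(P/2))} \le \norm{Q_{N,P} - I}{H^m(B^\infty(P/2))\to L^2(B^\infty(P/2))}\,\norm{\tilde u}{H^m(B^\infty(P/2))}$, and since $P \ge P_u + P_y > P_u$ the nonzero translates $B^\infty(P_u/2) - lP$, $l \neq 0$, miss $B^\infty(P/2)$, so $\tilde u = u$ on $B^\infty(P/2)$ and $\norm{\tilde u}{H^m(B^\infty(P/2))} = \norm{u}{H^m(B^\infty(P/2))} \le \norm{u}{H^m(\bbR^d)}$. Adding the two estimates proves the theorem.

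I expect the main obstacle to be bookkeeping rather than any deep difficulty: one must keep careful track of which convolution is periodic and which is aperiodic, on which domain every quantity is measured, and justify each time that the support constraints allow replacing a globally defined object by its restriction to $B^\infty(P/2)$ without altering the relevant values. The two places where the hypotheses $P \ge P_u + P_y$ and (consequently) $P > P_u$ are genuinely used are the inclusion $x - z \in B^\infty(P/2)$ in the periodization estimate and the collapse $\tilde u = u$ on $B^\infty(P/2)$ in the interpolation estimate.
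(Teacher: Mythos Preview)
Your proposal is correct and follows essentially the same route as the paper: split into a periodization error (handled via \cref{per_conv_exact}) and an interpolation error (handled via \cref{proj_kern} and Young's inequality on the torus), then combine by the triangle inequality, using $\smallnorm{\tilde h}{L^1(B^\infty(P/2))} = \norm{h}{L^1(\bbR^d)}$. If anything, your bookkeeping is more explicit than the paper's---in particular your justification that only $(h-\tilde h)\vert_{B^\infty(P/2)}$ contributes to $(\tilde h - h)*u$ on $B^\infty(P_y/2)$, and that $\tilde u = u$ on $B^\infty(P/2)$, are details the paper leaves implicit.
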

\begin{proof}
	For $g \in L^1(B^\infty(P/2))$ and $f \in L^2(B^\infty(P/2))$, the estimate
	\[ \norm{g *_P f}{L^2} \le \norm{g}{L^1} \norm{f}{L^2} \]
	holds.
	It follows from \cref{per_conv_exact} that the periodization error is bounded by
	\begin{equation*}
		\begin{split}
			\norm{h * u - \tilde{h} *_P \tilde{u}}{L^2(B^\infty(P_y/2))} &= \norm{h * u - \tilde{h} * u}{L^2(B^\infty(P_y/2))} \\
			&\le \norm{h - \tilde{h}}{L^1(B^\infty(P_y/2))} \norm{u}{L^2(B^\infty(P_y/2))},
		\end{split}
	\end{equation*}
	and from \cref{proj_kern} that the interpolation error is bounded by
	\begin{equation*}
		\begin{split}
			\norm{\bar{h} *_P (Q_{N,P} \tilde{u}) - \tilde{h} *_P \tilde{u}}{L^2(B^\infty(P/2))}
			&= \norm{\tilde{h} *_P (Q_{N,P} \tilde{u}) - \tilde{h} *_P \tilde{u}}{L^2(B^\infty(P/2))} \\
			&\le \smallnorm{\tilde{h}}{L^1(B^\infty(P/2))} \norm{(Q_{N,P} - I)\tilde{u}}{L^2(B^\infty(P/2))}.
		\end{split}
	\end{equation*}
	The triangle inequality yields
	\begin{align*}
		&\norm{\bar{h} *_P (Q_{N,P}\tilde{u}) - h * u}{L^2(B^\infty(P_y/2))} \\
		&\le \norm{\bar{h} *_P (Q_{N,P}\bar{u}) - \tilde{h} *_P \tilde{u}}{L^2(B^\infty(P/2))} + \norm{\tilde{h} *_P \tilde{u} - h * u}{L^2(B^\infty(P_y/2))} \\
		&\le \norm{h - \tilde{h}}{L^1(B^\infty(P/2))} \norm{u}{L^2} \\
		&\quad + \norm{h}{L^1(B^\infty(P/2))} \norm{Q_{N,P} - I}{H^m(B^\infty(P/2)) \to L^2(B^\infty(P/2))} \norm{u}{H^m(\bbR^d)}
	\end{align*}
	where we used that $\smallnorm{\tilde{h}}{L^1(B^\infty(P/2))} = \norm{h}{L^1(\bbR^d)}$.
\end{proof}

\rev{The periodization error of the kernel can, moreover, be controlled by choosing $P$ large enough.}
\begin{lem}
	For any $h \in L^1(\bbR^d)$ and $\epsilon > 0$ there exists $P > 0$ such that
	\[ \norm{h - h_{\text{per},P}}{L^1(B^\infty(P/2))} \le \epsilon. \]
\end{lem}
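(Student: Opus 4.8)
The plan is to observe that, restricted to the fundamental cell $B^\infty(P/2)$, the difference $h - h_{\text{per},P}$ is exactly the ``tail'' of the periodization series, namely $\sum_{l \in \bbZ^d \setminus \{0\}} h(\cdot + lP)$, and that the $L^1$-norm of this tail equals the mass of $h$ outside $B^\infty(P/2)$, which vanishes as $P \to \infty$ by absolute continuity of the Lebesgue integral. So the only genuine ingredients are Tonelli's theorem and the fact that the translated cubes $B^\infty(P/2) + lP$, $l \in \bbZ^d$, tile $\bbR^d$ up to a null set.

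First I would check that $h_{\text{per},P}$ is well defined as an element of $L^1(B^\infty(P/2))$. Since all summands are nonnegative, Tonelli's theorem gives
\[
\int_{B^\infty(P/2)} \sum_{l \in \bbZ^d} \abs{h(x + lP)} \di x = \sum_{l \in \bbZ^d} \int_{B^\infty(P/2) + lP} \abs{h(y)} \di y = \norm{h}{L^1(\bbR^d)} < \infty,
\]
using that the cubes $\{B^\infty(P/2) + lP\}_{l \in \bbZ^d}$ cover $\bbR^d$ and are pairwise disjoint up to their boundaries (a null set). In particular the series defining $h_{\text{per},P}(x)$ converges absolutely for a.e.\ $x \in B^\infty(P/2)$, so that $h_{\text{per},P}(x) - h(x) = \sum_{l \in \bbZ^d \setminus \{0\}} h(x + lP)$ there.

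Applying the same computation with the index set $\bbZ^d \setminus \{0\}$ then yields
\[
\norm{h - h_{\text{per},P}}{L^1(B^\infty(P/2))} \le \sum_{l \in \bbZ^d \setminus \{0\}} \int_{B^\infty(P/2) + lP} \abs{h(y)} \di y = \int_{\bbR^d \setminus B^\infty(P/2)} \abs{h(y)} \di y,
\]
since the cubes $B^\infty(P/2) + lP$ with $l \neq 0$ cover $\bbR^d \setminus B^\infty(P/2)$ up to a null set. Finally, because $h \in L^1(\bbR^d)$, dominated convergence (with dominating function $\abs{h}$, using $\mathbf{1}_{\bbR^d \setminus B^\infty(P/2)} \to 0$ pointwise as $P \to \infty$) shows that the right-hand side tends to $0$ as $P \to \infty$, so it suffices to pick $P$ large enough that it is bounded by $\epsilon$. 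I do not expect any real obstacle here; the only points meriting a word of care are the interchange of sum and integral, which is legitimate by Tonelli since every term is nonnegative, and the elementary tiling observation about the cubes $B^\infty(P/2) + lP$.
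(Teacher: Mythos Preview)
Your proof is correct and follows essentially the same route as the paper: identify $h - h_{\text{per},P}$ on $B^\infty(P/2)$ with the tail $\sum_{l\neq 0} h(\cdot + lP)$, bound its $L^1$-norm by $\int_{\bbR^d \setminus B^\infty(P/2)} \abs{h}$ via the tiling of $\bbR^d$ by translated cubes, and let $P\to\infty$. You simply supply more justification (Tonelli for the interchange, dominated convergence for the tail integral) than the paper's one-line argument.
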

\begin{proof}
	Since $h \in L^1(\bbR^d)$, we can choose $P$ large enough such that
	\begin{equation*}
		\norm{h(x) - h_{\text{per}, P}(x)}{L^1(B^\infty(P/2))}
		= \norm{\sum_{n \in \bbZ^d \setminus \{0\}} h(\cdot + nP)}{L^1(B^\infty(P/2))}
		\le \int_{\bbR^d \setminus B^\infty(P/2)} \abs{h(x)} \di x
		< \epsilon. \qedhere
	\end{equation*}
\end{proof}

Last of all, we estimate the interpolation error for the $2$-periodized problem in $d = 1$ with the specific kernel considered in \cref{sec:num}.
\begin{theorem}
	Let $h \in L^1(-1,1)$ be defined by \eqref{eq:def_h}. If $u \in H^1(\bbR)$, $\supp u \subseteq [-\frac12, \frac12]$, and $P = 2$, then the interpolation error is bounded by
	\begin{equation*}
		\norm{\widetilde{T}\tilde{u} - \bar{h} *_P (Q_{N,2}\tilde{u})}{L^2(-1,1)}
		\le 48 N^{-1} \norm{\tilde{u}}{H^1(-1,1)}.
	\end{equation*}
\end{theorem}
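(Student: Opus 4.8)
The plan is to split $\widetilde{T}\tilde{u} - \bar{h} *_P (Q_{N,2}\tilde{u})$ into the error from approximating the kernel $\tilde h$ and the error from interpolating $\tilde u$, and to observe that the first contributes nothing. Indeed, \cref{proj_kern} gives $\bar{h} *_P (Q_{N,2}\tilde{u}) = \tilde{h} *_P (Q_{N,2}\tilde{u})$, so that, using $\widetilde{T}\tilde{u} = \tilde{h} *_P \tilde{u}$ and the bilinearity of the periodic convolution,
\[
	\widetilde{T}\tilde{u} - \bar{h} *_P (Q_{N,2}\tilde{u}) = \tilde{h} *_P \bigl(\tilde{u} - Q_{N,2}\tilde{u}\bigr).
\]
The estimate $\norm{g *_P f}{L^2} \le \norm{g}{L^1}\norm{f}{L^2}$ from the proof of \cref{conv_err_est} then reduces the claim to bounding the two factors in
\[
	\norm{\widetilde{T}\tilde{u} - \bar{h} *_P (Q_{N,2}\tilde{u})}{L^2(-1,1)} \le \norm{\tilde{h}}{L^1(-1,1)}\,\norm{(I - Q_{N,2})\tilde{u}}{L^2(-1,1)}.
\]

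For the kernel factor I would use that $h$ is nonnegative. Up to the dilation $x \mapsto x/b$, $h$ is the one-dimensional Bessel potential kernel of order $2a$, which is known to be nonnegative; for integer $a$ (in particular $a = 2$ and $a = 4$) this is elementary, since then $h$ is an iterated convolution on $\bbR$ of the positive kernel whose Fourier transform is $\xi \mapsto (1 + b^2\xi^2)^{-1}$, i.e.\ of a multiple of $e^{-\abs{x}/b}$. Hence $\int_{\bbR} h\,\di x = (\Fourier h)(0) = 1$, and since periodization preserves the mass of a nonnegative function, $\norm{\tilde{h}}{L^1(-1,1)} = \norm{h}{L^1(\bbR)} = 1$.

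For the interpolation factor I would pass to Fourier space. With $a_k := \widehat{\tilde{u}}(k)$ and the standard aliasing identity for trigonometric interpolation, the periodic Fourier coefficients of $(I - Q_{N,2})\tilde{u}$ equal $-\sum_{l \ne 0} a_{k + Nl}$ for $k \in \bbZ_N$ and $a_k$ for $k \notin \bbZ_N$; note that $\tilde{u} \in H^1(-1,1)$ has an absolutely convergent Fourier series, so the point values defining $Q_{N,2}\tilde{u}$ make sense. Parseval's identity on $(-1,1)$ yields
\[
	\norm{(I - Q_{N,2})\tilde{u}}{L^2(-1,1)}^2 = 2\Bigl( \sum_{k \in \bbZ_N} \Bigl| \sum_{l \ne 0} a_{k + Nl} \Bigr|^2 + \sum_{k \notin \bbZ_N} \abs{a_k}^2 \Bigr).
\]
For the first sum I would use Cauchy--Schwarz with the weights $1 + j^2$, the inequality $\abs{k + Nl} \ge N(\abs{l} - \tfrac12)$ valid for $k \in \bbZ_N$ and $l \ne 0$, and $\sum_{l \ne 0}(\abs{l} - \tfrac12)^{-2} = \pi^2$, together with the fact that $(k,l) \mapsto k + Nl$ maps $\bbZ_N \times (\bbZ \setminus \{0\})$ bijectively onto $\bbZ \setminus \bbZ_N$; for the second sum I would use $1 + k^2 \ge \tfrac{N^2}{4}$ for $k \notin \bbZ_N$. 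Both sums are then dominated by $\tfrac{1}{N^2}\sum_{j \in \bbZ}(1 + j^2)\abs{a_j}^2$, and since $\norm{\tilde{u}}{H^1(-1,1)}^2 = 2\sum_{j \in \bbZ}(1 + j^2)\abs{a_j}^2$ one obtains
\[
	\norm{(I - Q_{N,2})\tilde{u}}{L^2(-1,1)} \le \frac{\sqrt{\pi^2 + 4}}{N}\,\norm{\tilde{u}}{H^1(-1,1)} ,
\]
and $\sqrt{\pi^2 + 4} < 48$. Combining the three displays proves the theorem.

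The only step that needs care is the last: the aliasing sum must be organized so that the double sum over $(k,l)$ collapses to a single tail sum over the frequencies outside $\bbZ_N$, and one has to track the factors of $2$ produced by the period $P = 2$ in the definitions of the $L^2$- and $H^1$-norms. Everything else is routine; the constant $48$ in the statement is a deliberately generous round figure, so no optimization of constants is required.
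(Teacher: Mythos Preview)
Your proof is correct and follows the same overall structure as the paper's: use \cref{proj_kern} to eliminate the kernel projection, apply the periodic Young inequality $\norm{g *_P f}{L^2} \le \norm{g}{L^1}\norm{f}{L^2}$, show $\smallnorm{\tilde h}{L^1(-1,1)} = 1$, and then bound $\norm{(I - Q_{N,2})\tilde u}{L^2}$ by $CN^{-1}\norm{\tilde u}{H^1}$.

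The differences are in two details. First, the paper simply writes $\smallnorm{\tilde h}{L^1(-1,1)} = \norm{h}{L^1(\bbR)} = \int_{\bbR} h = (\Fourier h)(0) = 1$ without comment; you explicitly justify the nonnegativity of $h$ (via the Bessel-potential / iterated-exponential argument) that makes these equalities valid, which is a genuine improvement in rigor. Second, for the interpolation estimate the paper quotes Corollary~2.47 of Pr\"o{\ss}dorf--Silbermann, stated for $H^t \to H^s$ with $t > 1$ and $s \in (0,1]$, and then passes to the limit $s \to 0$, $t = 1$; your direct aliasing/Cauchy--Schwarz computation handles the endpoint $t = 1$ cleanly and yields the sharper constant $\sqrt{\pi^2 + 4} \approx 3.72$ in place of $48$. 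So your argument is more self-contained and avoids the slightly awkward limiting step in the paper, at the cost of a short explicit Fourier computation.
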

\begin{proof}
By definion of $h$, we have
\[ \smallnorm{\tilde{h}}{L^1(-1,1)} = \norm{h}{L^1(\bbR)} = \int_{\bbR} h(x) \di x = (\Fourier h)(0) = 1. \]
By Corollary 2.47 in \cite{proessdorf1991}, the interpolation operator $Q_{N,P}$ satisfies
\[ \norm{I - Q_{N,2}}{H^t(-1,1) \to H^s(-1,1)} \le 48 N^{s - t} \]
for $s \in (0,1]$ and $t > 1$.
Taking the limit $s \to 0$, setting $t = 1$, using \cref{proj_kern}, and proceeding as in the proof of \cref{conv_err_est} leads to the estimate
\begin{align*}
	\norm{\widetilde{T}\tilde{u} - \bar{h} *_P (Q_{N,2}\tilde{u})}{L^2(-1,1)}
	&= \norm{\tilde{h} *_P \tilde{u} - (P_{N,2}\tilde{h}) *_P (Q_{N,2}\tilde{u})}{L^2(-1,1)} \\
	&\le \smallnorm{\tilde{h}}{L^1(-1,1)} \norm{(Q_{N,2} - I)\tilde{u}}{L^2(-1,1)}
	\le 48 N^{-1} \norm{\tilde{u}}{H^1(-1,1)} 
\end{align*}
for all $\tilde{u} \in H^1(-1,1)$.
\end{proof}

\section{Implementation of minimization}
\label{sec:impl_min}

For a given $y \in \calZ^* = H^{-t}(-1,1)$, we express the objective functional as
\begin{equation*}
	\begin{split}
		\hat{J}_y^{H^t}(e,s) + \delta_U(e,s) &= \norm{T^*e - s\phi}{L^\infty} - \scalprod{y,e}{H^{-t} \times H^t} + \delta_{B_1^{H^t}}(e) + \delta_{\bbR_+}(s) \\
		&= G(K(e,s)) + F(e,s),
	\end{split}
\end{equation*}
where $F$: $H^t(-1,1) \times \bbR \to \overline{\bbR}$,
\[ F(e,s) := \scalprod{y, e}{H^{-t} \times H^t} + \delta_{B_1^{H^t}}(e) + \delta_{\bbR_+}(s), \]
$G$: $L^2(-1,1) \to \overline{\bbR}$,
\[ G(r) := \norm{r}{L^\infty}, \]
and $K$: $H^t(-1,1) \times \bbR \to L^2(\bbR)$,
\[ K(e,s) = T^* e - s\phi. \]
We consider the constrained convex optimization problem
\begin{equation}
	\label{eq:PDPS_min_prob}
	\min_{x \in H^t(-1,1) \times \bbR} F(x) + G(Kx),
\end{equation}
where we denote $x = (e,s)$.
We solve this problem numerically using the \emph{primal-dual proximal splitting} (PDPS or \emph{Chambolle--Pock}) \emph{method}
\begin{equation*}
	\left\{
		\begin{aligned}
			x^{k + 1} &= \prox_{\tau F} (x^k - \tau K^* r^k), \\
			\bar{x}^{k+1} &= 2x^{k+1} - x^k, \\
			r^{k + 1} &= \prox_{\rho G^*} (r^k + \rho K \bar{x}).
		\end{aligned}
	\right.
\end{equation*}
see \cite{ChambollePock2011} and \cite[Section 8.4]{clason2020}. 
We note that $F$ and $G$ are proper, convex, and lower semicontinuous and $K$ is bounded and linear.
As the initial guess of the primal variable we use $x^0 = (e^0,s^0)^\mathrm{T}$ with
\[ s^0 = \frac{1}{\norm{\Phi_0}{H^t}} \quad\text{and}\quad e^0 = \frac{\Phi_0}{\norm{\Phi_0}{H^t}}, \]
where $\Phi_0$ denotes the probe functional of the unregularized test, i.e., it satisfies $T^*\Phi_0 = \phi$.
We then initialize the dual variable by $r^0 = T^* e^0 - s^0 \phi$.
By Corollary 9.13 in \cite{clason2020}, the sequence $(x^k,r^k)_{k\in\N}$ generated by the PDPS method converges weakly to a pair satisfying the Fenchel extremality conditions of the minimization problem \eqref{eq:PDPS_min_prob} if the condition
\begin{equation}
	\label{eq:PDPS_conv_cond}
	\tau\rho\norm{K}{H^t \times \bbR \to L^2}^2 < 1
\end{equation}
is satisfied.
We endow $H^t(-1,1) \times \bbR$ with the norm
\[ \norm{(e,s)}{H^t \times \bbR} = \left(\norm{e}{H^t}^2 + \abs{s}^2\right)^\frac12. \]
\begin{lem}
	The operator norm of $K$ satisfies
	\[ \norm{K}{H^t \times \bbR \to L^2} \le 2. \]
\end{lem}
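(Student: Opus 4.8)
The plan is to bound $\norm{K(e,s)}{L^2(-1,1)} = \norm{T^*e - s\phi}{L^2(-1,1)}$ directly via the triangle inequality, estimating the two contributions $\norm{T^*e}{L^2}$ and $\abs{s}\norm{\phi}{L^2}$ separately against $\norm{(e,s)}{H^t\times\bbR}$.

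First I would use that $T^* = \widetilde{T}$ (as noted in the proof of \cref{V_compatible}) acts as a Fourier multiplier: by the periodic convolution theorem together with Poisson's summation formula, exactly as in the proof of \cref{ran_T_H4}, one has $\Fper(T^*e)(k) = (\Fourier h)(k/P)\,\widehat{e}(k) = \left(1 + (b/P)^2 k^2\right)^{-a}\widehat{e}(k)$. Since $a \ge \tfrac12$ and $b > 0$, this multiplier lies in $(0,1]$, so Parseval's identity for $\Fper$ gives $\norm{T^*e}{L^2(-1,1)} \le \norm{e}{L^2(-1,1)}$. Next, because $(1+k^2)^t \ge 1$ for $t \ge 0$, the same Parseval identity (with the $P^{1/2}$ factor present in both norm definitions) yields $\norm{e}{L^2(-1,1)} \le \norm{e}{H^t(-1,1)}$. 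Finally, $\norm{\phi}{L^2(-1,1)} = 1$ by the normalization $\smallnorm{\varphi_{l,\beta}}{L^2(-1,1)} = 1$ imposed in \cref{sec:num}.

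Combining these estimates gives $\norm{K(e,s)}{L^2} \le \norm{e}{H^t} + \abs{s}$, and since both $\norm{e}{H^t} \le \norm{(e,s)}{H^t\times\bbR}$ and $\abs{s} \le \norm{(e,s)}{H^t\times\bbR}$, one concludes $\norm{K(e,s)}{L^2} \le 2\,\norm{(e,s)}{H^t\times\bbR}$, that is, $\norm{K}{H^t\times\bbR\to L^2}\le 2$. (Applying Cauchy--Schwarz to $\norm{e}{H^t} + \abs{s}$ would even improve the constant to $\sqrt 2$, but the weaker bound $2$ is all that is needed in the convergence condition \eqref{eq:PDPS_conv_cond}.)

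Every step here is an elementary norm estimate, so there is no substantive obstacle; the only place requiring a little care is bookkeeping the normalization constants in the definitions of $\Fper$, the $H^t(-1,1)$-norm, and the $L^2(-1,1)$-inner product, so that the multiplier bound and Parseval's identity are applied consistently.
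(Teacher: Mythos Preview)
Your proposal is correct and follows essentially the same approach as the paper: triangle inequality on $T^*e - s\varphi$, a Fourier-multiplier estimate showing $\norm{\widetilde{T}^*e}{L^2}\le\norm{e}{H^t}$ (you split this into $\norm{\widetilde{T}^*e}{L^2}\le\norm{e}{L^2}\le\norm{e}{H^t}$, the paper combines the two weights in one step), and the normalization $\norm{\varphi}{L^2}=1$. The observation that Cauchy--Schwarz gives the sharper constant $\sqrt{2}$ is a nice addendum not mentioned in the paper.
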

\begin{proof}
For all $(e,s) \in H^t(-1,1) \times \bbR$, we have
\begin{equation*}
	\begin{split}
		\norm{K(e,s)}{L^2} &= \norm{\widetilde{T}^*e - s\phi}{L^2}
		\le \norm{\widetilde{T}^*}{H^t \to L^2} \norm{e}{H^t} + \abs{s} \norm{\phi}{L^2} \\
 		&\le \left(\norm{\widetilde{T}^*}{H^t \to L^2} + \norm{\phi}{L^2}\right) \norm{(e,s)}{H^t \times \bbR}.
	\end{split}
\end{equation*}
Now,
\begin{equation*}
	\begin{split}
		\norm{\widetilde{T}^*e}{L^2}^2 &= \norm{\tilde{h} * e}{L^2}^2 = P\norm{\Fper\left(\tilde{h} * e\right)}{\ell^2}^2 = P^2\norm{\Fper\tilde{h} \cdot \Fper e}{\ell^2}^2 \\
		&\le \norm{\rev{\left(1 + \frac{0.03^2}{P^2}k^2\right)^{-2a}}\left(1 + k^2\right)^{-t}}{\ell^\infty} \norm{\left(1 + k^2\right)^\frac{t}{2} (\Fper e)(k)}{\ell^2}^2
		= \norm{e}{H^t}^2
	\end{split}
\end{equation*}
for all $e \in H^t(-1,1)$, so that $\norm{\widetilde{T}^*}{H^t \to L^2} \le 1$, and consequently
\begin{equation*}
	\norm{K}{H^t \times \bbR \to L^2} \le \norm{\widetilde{T}^*}{H^t \to L^2} + \norm{\phi}{L^2} \le 1 + 1 = 2. \qedhere
\end{equation*}
\end{proof}

With this knowledge, we choose the parameters $\tau = \rho = 0.25 < 1/2 = \norm{K}{H^t \times \bbR \to L^2}^{-1}$. This way, condition \eqref{eq:PDPS_conv_cond} is satisfied.

\begin{lem}
	The adjoint $K^*$: $L^2(-1,1) \to H^t(-1,1) \times \bbR$ of $K$ is given by
	\[ K^*(r) = \begin{pmatrix}
		R\rev{\widetilde{T}r} \\
		-\scalprod{\phi,r}{L^2}
	\end{pmatrix}, \]
	where $R$ denotes the Riesz isomorphism between $H^{-t}(-1,1)$ and $H^t(-1,1)$.
\end{lem}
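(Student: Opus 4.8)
The plan is to verify the claimed formula directly from the defining relation of the Hilbert-space adjoint: $K^{*}r$ is the unique element of $H^{t}(-1,1)\times\bbR$ with
\[ \scalprod{K(e,s),r}{L^{2}(-1,1)} = \scalprod{(e,s),K^{*}r}{H^{t}\times\bbR} \qquad\text{for all }(e,s)\in H^{t}(-1,1)\times\bbR, \]
where $H^{t}(-1,1)\times\bbR$ carries the norm $\norm{(e,s)}{H^{t}\times\bbR}=(\norm{e}{H^{t}}^{2}+\abs{s}^{2})^{1/2}$ introduced above. First I would expand the left-hand side. Since $K(e,s)=\widetilde{T}^{*}e-s\phi$ and the $L^{2}$ inner product is bilinear,
\[ \scalprod{K(e,s),r}{L^{2}(-1,1)} = \scalprod{\widetilde{T}^{*}e,r}{L^{2}(-1,1)} - s\,\scalprod{\phi,r}{L^{2}(-1,1)}, \]
and by the defining relation of the adjoint, $\scalprod{\widetilde{T}^{*}e,r}{L^{2}(-1,1)} = \scalprod{e,\widetilde{T}r}{L^{2}(-1,1)}$; this is meaningful because $\widetilde{T}$ maps $L^{2}(-1,1)$ into $H^{2a}(-1,1)\subseteq L^{2}(-1,1)$ by \cref{ran_T_H4}.

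The key step is to re-express $\scalprod{e,\widetilde{T}r}{L^{2}(-1,1)}$ as an inner product in $H^{t}(-1,1)$. By \cref{ran_T_H4} we have $\widetilde{T}r\in H^{2a}(-1,1)\subseteq L^{2}(-1,1)\subseteq H^{-t}(-1,1)$, and for $f\in L^{2}(-1,1)$ and $e\in H^{t}(-1,1)$ the $H^{-t}$--$H^{t}$ duality pairing reduces to the $L^{2}$ inner product, $\scalprod{f,e}{H^{-t}\times H^{t}}=\scalprod{f,e}{L^{2}(-1,1)}$, since the embedding $L^{2}(-1,1)\hookrightarrow H^{-t}(-1,1)$ is the canonical one. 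Hence, by the definition of the Riesz isomorphism $R\colon H^{-t}(-1,1)\to H^{t}(-1,1)$,
\[ \scalprod{e,\widetilde{T}r}{L^{2}(-1,1)} = \scalprod{\widetilde{T}r,e}{H^{-t}\times H^{t}} = \scalprod{R\widetilde{T}r,e}{H^{t}(-1,1)}. \]
Writing the scalar part as $-s\,\scalprod{\phi,r}{L^{2}}=\scalprod{s,-\scalprod{\phi,r}{L^{2}}}{\bbR}$ and adding the two contributions yields
\[ \scalprod{K(e,s),r}{L^{2}(-1,1)} = \scalprod{e,R\widetilde{T}r}{H^{t}(-1,1)} + \scalprod{s,-\scalprod{\phi,r}{L^{2}}}{\bbR} = \scalprod{(e,s),\,\bigl(R\widetilde{T}r,\,-\scalprod{\phi,r}{L^{2}}\bigr)}{H^{t}\times\bbR}, \]
so by uniqueness of the adjoint $K^{*}r=\bigl(R\widetilde{T}r,\,-\scalprod{\phi,r}{L^{2}}\bigr)$, as claimed.

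The only point requiring care is the middle identity above: one must be sure that $R$ is applied to a bona fide element of $H^{-t}(-1,1)$ and that the $L^{2}$ inner product is genuinely the restriction of the $H^{-t}$--$H^{t}$ duality pairing under the canonical embedding. This becomes immediate once \cref{ran_T_H4} is invoked to note that $\widetilde{T}r$ is not merely in $H^{-t}(-1,1)$ but in fact lies in $L^{2}(-1,1)$ (indeed in $H^{2a}(-1,1)$), so that the pairing in question is literally an $L^{2}$ integral. Everything else is routine bookkeeping, and the stated formula for the specific value of $t$ used in the simulations is then just a specialization.
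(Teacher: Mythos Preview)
Your proof is correct and follows essentially the same route as the paper: expand $\scalprod{K(e,s),r}{L^2}$, shift $\widetilde{T}^*$ to $\widetilde{T}$, identify the $L^2$ pairing with the $H^{-t}\times H^t$ duality, apply the Riesz map $R$, and read off the two components. The only difference is that you spell out more carefully why $\widetilde{T}r\in L^2(-1,1)$ (via \cref{ran_T_H4}) and why the duality pairing reduces to the $L^2$ inner product, points the paper leaves implicit.
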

\begin{proof}
For $(e,s) \in H^t(-1,1) \times \bbR$ and $r \in L^2(-1,1)$ we have
\begin{equation*}
	\begin{split}
		\scalprod{K(e,s), r}{L^2} &= \scalprod{\widetilde{T}^*e - s\phi, r}{L^2}
		= \scalprod{e, \widetilde{T}r}{L^2} - s\scalprod{\phi,r}{L^2}
		= \scalprod{\rev{\widetilde{T}r},e}{H^{-t} \times H^t} - s\scalprod{\phi,r}{L^2} \\
		&= \scalprod{R\rev{\widetilde{T}r},e}{H^t} - s\scalprod{\phi,r}{L^2}
		= \scalprod{\begin{pmatrix}R\rev{\widetilde{T}r} \\ -\scalprod{\phi,r}{L^2}\end{pmatrix}, \begin{pmatrix}e \\ s\end{pmatrix}}{H^t \times \bbR}. \qedhere
	\end{split}
\end{equation*}
\end{proof}

For $t \in \bbR$, we define the sequence space
\[ \ell_t^2(\bbZ) := \left\{f: \bbZ \to \bbR \Bigg\vert k \mapsto (1 + k^2)^\frac{t}{2} f(k) \in \ell^2(\bbZ)\right\}. \]
\begin{lem}
	The Riesz isomorphism $R$ between $H^{-t}(-1,1)$ and $H^t(-1,1)$ can be expressed as
	\[ R = \left(\Fper\right)^{-1} M_{-t} \Fper, \]
	where $M_t$: $\ell_t^2(\bbZ) \to \ell_{-t}^2(\bbZ)$ denotes the multiplication operator
	\[ (M_t f)(k) := \left(1 + k^2\right)^t f(k), \]
	and $\Fper$: $H^{-t}(-1,1) \to \ell_{-t}^2(\bbZ)$ the unique extension of the periodic Fourier transform on $L^2(-1,1)$.
\end{lem}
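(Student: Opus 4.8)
The plan is to exhibit the operator $S := (\Fper)^{-1} M_{-t} \Fper$ as a concrete realization of the Riesz isomorphism and then to conclude by its uniqueness. First I would recall that, just as $\Fper$ is (up to the constant $P^{1/2}$) an isometric isomorphism $L^2(-1,1) \to \ell_0^2(\bbZ)$ and $H^t(-1,1) \to \ell_t^2(\bbZ)$, its unique continuous extension is likewise an isometric isomorphism $H^{-t}(-1,1) \to \ell_{-t}^2(\bbZ)$; this follows from the fact that, for $y \in L^2(-1,1)$,
\[ \norm{y}{H^{-t}} = \sup_{\norm{e}{H^t} \le 1} \scalprod{y,e}{L^2} = P^{1/2}\left(\sum_{k \in \bbZ} (1+k^2)^{-t} \abs{\widehat{y}(k)}^2\right)^{1/2} \]
by the weighted Cauchy--Schwarz inequality, together with the density of the trigonometric polynomials. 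Hence $S$ is a well-defined bounded linear bijection $H^{-t}(-1,1) \to H^t(-1,1)$ whose Fourier coefficients satisfy $\widehat{Sy}(k) = (1+k^2)^{-t}\widehat{y}(k)$.

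It then remains to verify the defining relation $\scalprod{y,e}{H^{-t} \times H^t} = \scalprod{Sy,e}{H^t}$ for all $y \in H^{-t}(-1,1)$ and $e \in H^t(-1,1)$. By Parseval's identity for the $H^t$-inner product and the formula for $\widehat{Sy}$,
\[ \scalprod{Sy,e}{H^t} = P\sum_{k \in \bbZ} (1+k^2)^t\, \widehat{Sy}(k)\, \overline{\widehat{e}(k)} = P\sum_{k \in \bbZ} \widehat{y}(k)\, \overline{\widehat{e}(k)}. \]
For $y \in L^2(-1,1) \subseteq H^{-t}(-1,1)$ the left-hand pairing reduces to $\scalprod{y,e}{L^2}$, which by Parseval's identity in $L^2(-1,1)$ equals the very same series; thus the two sides of the desired identity coincide on the dense subspace $L^2(-1,1)$ of $H^{-t}(-1,1)$. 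Since $y \mapsto \scalprod{y,e}{H^{-t} \times H^t}$ and $y \mapsto \scalprod{Sy,e}{H^t}$ are both continuous on $H^{-t}(-1,1)$ --- the latter because $\abs{P\sum_k \widehat{y}(k)\overline{\widehat{e}(k)}} \le \norm{y}{H^{-t}}\norm{e}{H^t}$ again by weighted Cauchy--Schwarz --- the identity extends to all of $H^{-t}(-1,1)$. By the uniqueness part of the Riesz representation theorem, $S$ is the Riesz isomorphism, i.e.\ $R = S = (\Fper)^{-1} M_{-t} \Fper$.

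The one point requiring care is the very first step: making precise the meaning of $\widehat{y}(k)$ for $y \in H^{-t}(-1,1)$ and checking that the duality pairing $\scalprod{\cdot,\cdot}{H^{-t} \times H^t}$ is compatible with the $L^2$-inner product on $L^2(-1,1) \subseteq H^{-t}(-1,1)$, so that the density argument is legitimate. Once the extension $\Fper\colon H^{-t}(-1,1) \to \ell_{-t}^2(\bbZ)$ is set up, everything else is routine Parseval bookkeeping, the main chore being to keep track of the normalization constant $P$ appearing in the definitions of the various norms; no compactness or delicate estimates enter.
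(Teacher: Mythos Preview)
Your proof is correct and takes essentially the same approach as the paper: both verify, via Parseval/Plancherel, that the operator $(\Fper)^{-1}M_{-t}\Fper$ agrees with the Riesz map on the dense subspace $L^2(-1,1)\subseteq H^{-t}(-1,1)$, and then extend by density and boundedness. The only cosmetic difference is that the paper starts from $R$ and derives the Fourier formula, whereas you start from the formula and verify the Riesz property; the underlying computation is identical.
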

\begin{proof}
For all $y \in L^2(-1,1)$ and $e \in H^t(-1,1)$, we have
\begin{equation*}
	\begin{split}
		P\scalprod{\Fper y, \Fper e}{\ell^2} &= \scalprod{y,e}{L^2} = \scalprod{y,e}{H^{-t} \times H^t} \\
		&= \scalprod{Ry,e}{H^t}	= P \sum_{k \in \bbZ} \left(1 + k^2\right)^t (\Fper Ry)(k) (\Fper e)(k)
	\end{split}
\end{equation*}
by the Plancherel theorem and the Riesz representation theorem.
Due to the isometry of $P^{1/2}\Fper$, the density of $H^t(-1,1)$ in $L^2(-1,1)$ also implies the density of $\Fper(H^t(-1,1))$ in $\ell^2(\bbZ)$. Thus,
\[ \Fper y = M_t \Fper R y \]
for all $y \in L^2(-1,1)$.
This yields
\[ R\rvert_{L^2(-1,1)} = (\Fper)^{-1}M_t^{-1}\Fper = (\Fper)^{-1}M_{-t}\Fper. \]
Since $L^2(-1,1)$ is dense in $H^{-t}(-1,1)$ and $R$ is isometric (and thus bounded), it follows that $R$ is the unique extension of the operator $(\Fper)^{-1}M_{-t}\Fper$ on $L^2(-1,1)$ to $H^{-t}(-1,1)$. Now, the statement follows from the uniqueness of the extension of the Fourier transform.
\end{proof}

\begin{lem}
	The proximal point mappings of $\tau F$ and $\rho G^*$ are given by
	\begin{equation*}
		\begin{aligned}
			\prox_{\tau F}(e,s) &= \proj_U^{H^t \times \bbR} (e + \tau Ry, s), \\
			\prox_{\rho G^*}(r) &= \proj_{B_1^{L^1}}(r).
		\end{aligned}
	\end{equation*}
\end{lem}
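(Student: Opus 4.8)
The plan is to evaluate each proximal mapping directly from its definition $\prox_{\tau F}(x)=\argmin_{z}\bigl(F(z)+\tfrac{1}{2\tau}\norm{z-x}{}^2\bigr)$, exploiting that $F$ is a bounded linear functional plus the indicator of a product of closed convex sets, and that $G^*$ is the Fenchel conjugate of a norm.

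For $\prox_{\tau F}$ I would first transport the linear part into the Hilbert space $H^t(-1,1)$ through the Riesz isomorphism, writing $\scalprod{y,e}{H^{-t}\times H^t}=\scalprod{Ry,e}{H^t}$, so that, as dictated by the splitting $\hat J_y^{H^t}+\delta_U=G(K\cdot)+F$, one has $F(e,s)=-\scalprod{Ry,e}{H^t}+\delta_{B_1^{H^t}}(e)+\delta_{\bbR_+}(s)$. Since the product norm on $H^t(-1,1)\times\bbR$ satisfies $\norm{(e',s')-(e,s)}{H^t\times\bbR}^2=\norm{e'-e}{H^t}^2+\abs{s'-s}^2$ and $F$ is a sum of a function of $e$ and a function of $s$, the minimization defining $\prox_{\tau F}(e,s)$ decouples. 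The $s$-part, $\min_{s'}\bigl(\delta_{\bbR_+}(s')+\tfrac{1}{2\tau}\abs{s'-s}^2\bigr)$, has the solution $\proj_{\bbR_+}(s)$. For the $e$-part I complete the square,
\[ -\scalprod{Ry,e'}{H^t}+\tfrac{1}{2\tau}\norm{e'-e}{H^t}^2=\tfrac{1}{2\tau}\norm{e'-(e+\tau Ry)}{H^t}^2+\text{const}, \]
so that adding $\delta_{B_1^{H^t}}(e')$ yields the metric projection $\proj_{B_1^{H^t}}(e+\tau Ry)$ as the unique minimizer. Because $U=B_1^{H^t}\times\bbR_+$ is a product set and the norm is a product norm, the pair of these two projections is exactly $\proj_U^{H^t\times\bbR}(e+\tau Ry,s)$, which is the asserted formula.

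For $\prox_{\rho G^*}$ I would first identify $G^*$. Since $G(r)=\norm{r}{L^\infty}$ is a norm, its conjugate with respect to the $L^2(-1,1)$ inner product governing the proximal step is the indicator of the unit ball of the dual norm: for $\norm{p}{L^1}\le 1$ one gets $\scalprod{p,r}{L^2}-\norm{r}{L^\infty}\le\norm{p}{L^1}\norm{r}{L^\infty}-\norm{r}{L^\infty}\le 0$ with equality at $r=0$, whereas for $\norm{p}{L^1}>1$ testing against $r=t\,\mathrm{sign}(p)$ and letting $t\to\infty$ gives $G^*(p)=\infty$; hence $G^*=\delta_{B_1^{L^1}}$. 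The proximal mapping of the indicator of a nonempty closed convex set equals the metric projection onto that set, so $\prox_{\rho G^*}(r)=\proj_{B_1^{L^1}}(r)$, the $L^2$-projection onto the $L^1$ unit ball.

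Well-definedness of both prox operators is not an issue, since $F$ and $G^*$ are proper (finite at the origin), convex and lower semicontinuous, so each of the two defining minimization problems is strongly convex with a unique minimizer. The computations are elementary; the only places that require a little care are transporting $\scalprod{y,\cdot}{}$ from the duality pairing into $H^t(-1,1)$ via $R$ before completing the square, and computing the conjugate of the $L^\infty$-norm with respect to the $L^2$-pairing rather than any other duality, which is what produces the $L^1$-ball. I do not anticipate a genuine obstacle beyond this bookkeeping.
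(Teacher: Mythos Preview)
Your proposal is correct and follows essentially the same route as the paper: decouple the $\prox_{\tau F}$ minimization into separate $e$- and $s$-problems, complete the square in $H^t$ after replacing the duality pairing by $\scalprod{Ry,\cdot}{H^t}$, and recognize the resulting projections onto $B_1^{H^t}$ and $\bbR_+$ as $\proj_U^{H^t\times\bbR}$; for the second identity, identify $G^*$ as $\delta_{B_1^{L^1}}$ (the paper simply cites this as a standard fact, whereas you verify it directly) and conclude that its prox is the corresponding projection. Your explicit remark that the sign of the linear term in $F$ must be negative for the splitting $\hat J_y^{H^t}+\delta_U=G\circ K+F$ to hold is well taken; the paper's displayed definition of $F$ carries the opposite sign but its proof uses the correct one, so your version is the internally consistent one.
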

\begin{proof}
We compute
\begin{equation*}
	\begin{split}
		\prox_{\tau F}(e,s) &= \argmin_{(z_e,z_s) \in U} \frac12\norm{(z_e,z_s) - (e,s)}{H^t \times \bbR}^2 + \tau\scalprod{y,e}{H^{-t} \times H^t} \\
		&= \argmin_{(z_e,z_s) \in U} \frac12\norm{z_e - e}{H^t}^2  + \frac12\abs{z_s - s}^2 - \tau\scalprod{Ry, z_e - e}{H^t} \\
		&\quad - \tau\scalprod{Ry, e}{H^t} + \frac{\tau^2}{2}\norm{Ry}{H^t}^2 - \frac{\tau^2}{2}\norm{Ry}{H^t}^2 \\
		&= \argmin_{(z_e,z_s) \in U} \frac12\norm{z_e - e - \tau Ry}{H^t}^2 + \frac12\abs{z_s - s}^2
		= \proj_U^{H^t \times \bbR} (e + \tau Ry, s).
	\end{split}
\end{equation*}

The Fenchel conjugate of $G = \norm{\cdot}{L^\infty(-1,1)}$ is given by
\[ G^* = \delta_{B_1^{L^1(-1,1)}}, \]
the indicator function of the unit ball in $L^1(-1,1)$, see Example 5.3 in \cite{clason2020}.
Therefore,
\[ \prox_{\rho G^*} = \prox_{\delta_{B_1^{L^1(-1,1)}}} = \proj_{B_1^{L^1(-1,1)}}. \qedhere \]
\end{proof}

Given a vector with the values of a function in the grid points $\bbZ_N$, we discretize this projection by the projection onto the $1$-norm unit ball. We implement this projection using an algorithm introduced in \cite{held1974}. An overview over different algorithms for the projection to the $1$-norm ball can be found in \cite{condat2016}.

\section{Normalized discretization}
\label{sec:norm_disc}

\rev{In our numerical simulations, we discretize the norms and inner products on $L^1(-1,1)$, $L^2(-1,1)$, and $L^\infty(-1,1)$ according to
\begin{align*}
	\norm{v}{L^1(-1,1)} &\approx \frac{2}{n} \norm{\underline{v}_N}{1}, &
	\norm{v'}{L^\infty(-1,1)} &\approx \norm{\underline{v}_N^\prime}{\infty} \\
	\norm{x}{L^2(-1,1)} &\approx \sqrt{\frac{2}{n}} \norm{\underline{x}_N}{2} &
	\scalprod{x, x'}{L^2(-1,1)} &\approx \frac{2}{n} \scalprod{\underline{x}_N, \underline{x}_N^\prime}{2},
\end{align*}
where $(\underline{x}_N)(k) = x(2k/N)$ for all $k \in \bbZ_N$.
We furthermore work with scaled versions 
\[ \tilde{u}^\dagger := \frac{2}{n}\udag, \quad \tilde{\phi} := \sqrt{\frac{2}{n}}\phi, \quad \tilde{\Phi} := \sqrt{\frac{2}{n}}\Phi \]
of $\udag$, $\phi$, and $\Phi$. These satisfy
\[ \norm{\underline{\tilde{u}}_N^\dagger}{1} = 1 \quad\text{and}\quad \norm{\underline{\tilde{\phi}}_N}{2} = 1. \]
Moreover, we have
\begin{align*}
	J_{T\udag}^\Y(\Phi) &= \frac{\norm{T^*\Phi - \phi}{L^\infty} - \scalprod{T\udag, \Phi}{L^2}}{\norm{\Phi}{L^2}}
	= \frac{\sqrt{\frac{n}{2}}\norm{T^*\tilde{\Phi} - \tilde{\phi}}{L^\infty} - \frac{n}{2}\sqrt{\frac{n}{2}}\scalprod{T\tilde{u}^\dagger, \tilde{\Phi}}{L^2}}{\sqrt{\frac{n}{2}}\norm{\tilde{\Phi}}{L^2}} \\
	&\approx \sqrt{\frac{n}{2}} \cdot \frac{\norm{\underline{T^*\tilde{\Phi}}_N - \underline{\tilde{\phi}}_N}{\infty} - \scalprod{\underline{T\tilde{u}^\dagger}_N, \underline{\tilde{\Phi}}_N}{2}}{\norm{\underline{\tilde{\Phi}}_N}{2}}.
\end{align*}
We use this expression to compute the power of the considered regularized tests.}


\bibliography{literature}

\end{document}